\newtheorem{thm}{\hskip\parindent {Theorem}}[section]
\newtheorem{lem}{\hskip\parindent {Lemma}}[section]
\newtheorem{defi}{\hskip\parindent {Definition}}[section]
\newtheorem{cor}{\hskip\parindent {Corollary}}[section]
\newtheorem{rem}{\hspace\parindent{Remark}}[section]
\newtheorem{pro}{\hskip\parindent{Proposition}}[section]
\title[Endpoint Strichartz Estimates]{
Endpoint Strichartz Estimates for Charge Transfer Hamiltonians}
\author{Qingquan Deng, \ Avy Soffer \ and Xiaohua Yao}
\address {Qingquan Deng, Department of Mathematics,
 Central China Normal University, Wuhan, 430079, P.R. China}
\email{dengq@mail.ccnu.edu.cn}
\address{ Avy Soffer, Department of Mathematics, Rutgers University,
Piscataway, 08854-8019, USA}
\email{soffer@math.rutgers.edu}
\address{Xiaohua Yao, Department of Mathematics and  Hubei Province Key Laboratory of Mathematical Physics,
 Central China Normal University, Wuhan, 430079, P.R. China}
\email{yaoxiaohua@mail.ccnu.edu.cn }
\date{\today}
\subjclass[2000]{ 35Q35; 37K40}
\keywords{Endpoint Strichartz estimates; Charge transfer model; Asymptotic completeness.}
\begin{document}

\maketitle
\begin{abstract}
We prove the optimal Strichartz estimates for  Schr\"{o}dinger equations with charge transfer potentials and general source terms in $\mathbb{R}^n$ for $n\geq3$.  The proof is based on asymptotic completeness for the charge transfer models and the (weak) point-wise time decay estimates for the scattering states of such systems of Rodnianski, Schlag and Soffer \cite{RSS1}. The method extends for the matrix charge transfer problems.
\end{abstract}

 \tableofcontents

\section{Introduction}
\setcounter{equation}{0}
\subsection{Main results}

 For describing our main results, let us first recall the following definition of scalar charge transfer model.



\begin{defi}\label{defi-1}By a {\rm (scalar) charge transfer model},  we mean the following time-dependent Schr\"{o}dinger equation
\begin{eqnarray}
\left\{%
\begin{array}{ll}\label{Sch eq}
i\partial_{t}\psi=-\frac{1}{2}\Delta\psi+\sum^{m}_{\kappa=1}V_{\kappa}(x-\vec{v}_{\kappa}t)\psi\\
\psi(s,\cdot)=\psi_{s}(x),\ \ x\in \mathbb{R}^{n}, \ s\in \mathbb{R}.\\
\end{array}%
\right.
\end{eqnarray}
where $\vec{v}_{j}$ are distinct vectors in $\mathbb{R}^{n}$ ($n\geq 3$) and for every $1\leq k\leq m$, the real potential $V_{\kappa}$ satisfies that
(i) $V_{\kappa}$ is exponentially localized smooth function of $\mathbb{R}^{n}$, and
(ii) 0 is neither a zero eigenvalue nor a zero resonance of Schr\"{o}dinger operator $H_{\kappa}=-\frac{1}{2}\Delta+V_{\kappa}(x).$
\end{defi}

Say that $\psi$ is a resonance if it is a distributional solution of equation $H_{\kappa}\psi=0$ and belongs to the space $L^{2}(\langle x\rangle^{-\sigma}dx)$ for some $\sigma>\frac{1}{2}$, but not for $\sigma=0$.  One should notice that the condition (i) here is assumed for convenience, not for optimality. The condition (ii) usually appeared in the studies and applications of general dispersive estimates, see e.g. \cite{JK, JSS2, JSS1, RSS1, Wei}. Moreover, it is known that there is no resonance for $n\geq5$.

Schr\"odinger equation (\ref{defi-1}) describes the $m$-centers of forces are traveling along the given straight line trajectories $\vec{v}_{\kappa}t \ (k=1, 2, \cdots, m)$ all of which act on a quantum mechanical particle of mass 1 through the potential $V_k(x)$. It is a well-known model has been thoroughly studied by many peoples, e.g. by \cite{Gr, Ya1, Wu, Zi} for the multi-channel scattering theory, and by Rodnianski, Schlag and Soffer \cite{RSS1, RSS2} for  time decay estimates. In this work, we are mainly interested in proving the homogeneous and inhomogeneous endpoint Strichartz estimates for
scalar (and matrix) charge transfer problem with initial data belonging to scattering states in $L^{2}(\mathbb{R}^{n})$ as $n\geq3$.
The method of proof relies on the use of the Kato-Jensen type estimates and AC (asymptotic completeness),
as well as on the known $L^1 \cap L^2$ into $L^{\infty}$ decay estimate (see Rodnianski, Schlag and Soffer \cite{RSS1, RSS2}) of the charge transfer Hamiltonian. The proof also requires new estimates of the (time-dependent) projection on the scattering states of $H(t)$ (see (\ref{th})). Moreover, the methods used in \cite{RSS1} required localization of initial data, which is not suitable for proving endpoint Strichartz estimates.

Without loss of generality, we shall assume that the number of potentials in Definition \ref{Sch eq} is $m=2$ and that the velocities are $\vec{v}_{1}=0$, $\vec{v}_{2}=(1,0,\ldots,0)=\vec{e}_{1}$. Thus let us turn to the problem
\begin{eqnarray}
\left\{%
\begin{array}{ll}\label{main-equation}
i\partial_{t}\psi=-\frac{1}{2}\Delta\psi+V_{1}\psi+V_{2}(x-\vec{e}_{1}t)\psi\\
\psi(s,\cdot)=\psi_{s}\\
\end{array}%
\right.
\end{eqnarray}
where $V_{1},\ V_{2}$  are exponentially localized smooth  potentials and satisfy the conditions of Definition \ref{Sch eq}. For studying the charge transfer model, the  Galilei transforms and their inverse will be often used
\begin{eqnarray}\label{Ga-1}
G_{\vec{v},y}(t)=e^{i\frac{\vec{v}^{2}}{2}t}e^{-ix\cdot \vec{v}}e^{i(y+t\vec{v})\vec{p}}, \ \ G_{\vec{v},y}(t)^{-1}=e^{-iy\cdot \vec{v}}G_{-\vec{v},-y}(t)
\end{eqnarray}
where $\vec{p}=-i\vec{\nabla}$ and $ y, \vec{v}\in \mathbb{R}^{n}$. When $y=0$, we simply write $G_{\vec{v}}(t)=G_{\vec{v},0}(t)$ and then $G_{\vec{v}}(t)^{-1}=G_{-\vec{v}}(t)$. Clearly, the Galilei transforms $G_{\vec{v},y}(t)$ are isometries on all $L^{p}(\mathbb{R}^n)$ ($1\le p\le \infty $) spaces. Moreover, one can easily check that
$
G_{\vec{v},y}(t)e^{it\frac{\Delta}{2}}=e^{it\frac{\Delta}{2}}G_{\vec{v},y}(0)
$
and the solution
\begin{eqnarray}\label{eq-2}
\psi(t,x):=G_{\vec{v},y}(t)^{-1}e^{-itH}G_{\vec{v},y}(0)\psi_{0},\ \ H=-\frac{1}{2}\Delta+V,
\end{eqnarray}
satisfies Scr\"odinger equation with a moving potential
\begin{eqnarray}\label{eq-3}
i\partial_{t}\psi=-\frac{1}{2}\Delta\psi+V(x-\vec{v}t-y)\psi,\ \ \psi(0,\cdot)=\psi_{0}\nonumber.
\end{eqnarray}
In the sequel, we will make use of these properties of $G_{\vec{v},y}(t)$ without further mentioning.

By the assumptions on potentials  $V_{1}$ and $\ V_{2}$ in Definition \ref{Sch eq},  it follows from the Birman-Schwinger principle (see e.g. Reed-Simon \cite {RS}) that Schr\"odinger operators $H_{1}$ and $H_{2}$ have only finite discrete negative eigenvalues with finite multiplicities.  So we can list $u_{1},\ldots,u_{m}$ and $w_{1},\ldots,w_{\ell}$ (counting multiplicity) be the normalized orthogonal bound states of $H_{1}$ and $H_{2}$ corresponding to these negative eigenvalues $\lambda_{1},\ldots,\lambda_{m}$ and $\mu_{1},\ldots,\mu_{\ell}$, respectively. Denote by $P_{b}(H_{1})$ and $P_{b}(H_{2})$ the projections onto the bound states space of $H_{1}$ and $H_{2}$, respectively and let $P_{ac}(H_{\kappa})=I-P_{b}(H_{\kappa})$, $\kappa=1,2$. The projections have the form
\begin{eqnarray}
P_{b}(H_{1})=\sum^{m}_{i=1}\langle \cdot,u_{i}\rangle u_{i},\ \ \ \ \ P_{b}(H_{2})=\sum^{\ell}_{j=1}\langle \cdot,w_{j}\rangle w_{j}.\nonumber
\end{eqnarray}
It is well-known that the solution would not disperse for arbitrary  initial data even for Schr\"{o}dinger equations with only one potential. We need to use appropriate projection to project away bound states for the problem (\ref{main-equation}). Thus the following orthogonality condition in the context of the charge transfer Hamiltonian (\ref{main-equation}) was introduced firstly by Rodnianski, Schlag and Soffer \cite{RSS1}.

\begin{defi}\label{defi-2} Let $U(t,s)$ be the two parameter unitary propagators of Schr\"odinger equation $(\ref{main-equation})$ and  $\psi(t,x)=U(t,s)f$ be the solution with an initial value $\psi(s)=f\in L^2(\mathbb{R}^n)$ (see e.g. Reed-Simon \cite{RS1}).  We say that $f$ \text{(}or also $\psi(t,\cdot)$\text{)} is asymptotically orthogonal to the bound states of $H_{1}$ and $H_{2}$ if
\begin{eqnarray}\label{eq-4}
\|P_{b}(H_{1},t)U(t,s)f\|_{L^2}+\|P_{b}(H_{2},t)U(t,s)f\|_{L^2}\rightarrow0\ as \ t\rightarrow +\infty,
\end{eqnarray}
where
\begin{eqnarray}\label{eq-5}
 P_{b}(H_{1},t)=P_{b}(H_{1}),\ \ P_{b}(H_{2},t)=G_{-\vec{e_{1}}}(t)P_{b}(H_{2})G_{\vec{e_{1}}}(t)
\end{eqnarray}
for all times $t$,  by assumptions that $\vec{v}_{1}=0$ and $\vec{v}_{2}=(1,0,\ldots,0)=\vec{e}_{1}$.
\end{defi}

For each $s\in \mathbb{R}$, denote by $\mathscr{A}(s)$ the set of $f$ satisfying (\ref{eq-4}).  We remark that the set $\mathscr{A}(s)$ is a closed subspace of $L^{2}$ and exactly coincides with the space of \emph{scattering states} beginning from $t=s$ for the charge transfer problem (see e.g. Theorem 1.1 of Graf \cite{Gr}). Let $P_c(s)$ denote the projections on the closed scattering subspace $\mathscr{A}(s)$ in $L^2$. If $V_2=0$, then clearly $P_c(s)\equiv P_{ac}(H_1)=1-P_{b}(H_1)$ (the absolute spectra projection of $H_1$). If $V_2\neq 0$, then $P_c(t)$ is very different from the
instantaneous projections $P_{ac}(H(t))$ on the continuous spectral part of the time-dependent Hamiltonian:
\begin{eqnarray}\label{th}
H(t)=-\frac{1}{2}\Delta+V_{1}+V_{2}(x-\vec{e}_{1}t).
\end{eqnarray}
Although the definition of $P_{ac}(H(t))$ is quite intuitive  and simple, however, in this paper we don't use them. Comparably, the projections $P_c(t)$ have several nice properties which play indispensable roles in our arguments. This will be shown in more detail in the next section.
Furthermore, if the orthogonality condition of Definition \ref{defi-2} on initial data $f$ is satisfied, then the following decay estimates for the charge transfer Hamiltonian have been established by using the multi-channels decomposition method in Rodnianski, Schlag and Soffer \cite{RSS1} and Cai \cite{Cai}.

\begin{pro}\label{ex-pro-1}
Let $U(t,s)$ denote the two parameter unitary propagators of Schr\"odinger equation $(\ref{main-equation})$. Then for any initial data $f\in L^{1}\cap\mathscr{A}(s)$, there exists some constant $C>0$ independent of $t,s$  such that the solution has the weak decay estimate
\begin{eqnarray}\label{weak estimate}
\|U(t,s)f\|_{L^{2}+L^{\infty}}\le C \langle t-s\rangle^{-n/2}\|f\|_{L^{1}\cap L^{2}},
\end{eqnarray}
holds, where the norms  $$\|f\|_{L^{2}+L^{\infty}}:=\inf_{f=h+g}(\|h\|_{L^2}+\|g\|_{L^\infty}), \  \|f\|_{L^{1}\cap L^{2}}:=\max(\|f\|_{L^1},\|f\|_{L^2})$$ with the dual relation $(L^2+L^\infty)^*=L^1\cap L^2$.

Moreover, if $\widehat{V_{1}},\widehat{V_{2}}\in L^{1}$ (the hat  ``  $\hat{}$ "   denotes Fourier transform), then one further has the strongly decay estimate
\begin{eqnarray}\label{ex-eq-1}
\|U(t,s)f\|_{L^{\infty}}\le C |t-s|^{-n/2}\|f\|_{L^{1}}.
\end{eqnarray}
\end{pro}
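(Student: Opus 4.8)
The plan is to follow the multi-channel decomposition scheme of Rodnianski, Schlag and Soffer, reducing the two-center propagator $U(t,s)$ to the two single-center dispersive flows $e^{-i(t-s)H_1}P_{ac}(H_1)$ and its Galilei conjugate governed by $H_2$. First I would introduce time-dependent cutoffs $\chi_1,\chi_2$ adapted to the two trajectories—one localized near the stationary center at the origin, the other near the moving center $\vec{e}_1 t$—together with a complementary cutoff for the intermediate ``free'' region. Writing $U(t,s)f$ as the sum of these three pieces, the free piece is driven by $e^{i(t-s)\Delta/2}$ and disperses at the sharp $\langle t-s\rangle^{-n/2}$ rate by the classical free estimate, so the whole problem is transferred to the two localized channels.

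For the localized pieces I would use the Galilei transforms $G_{\vec{e}_1,y}(t)$ from \eqref{Ga-1} to convert the moving channel into a stationary one: conjugation turns $V_2(x-\vec{e}_1 t)$ into $V_2(x)$, so near the second center the dynamics is driven by $H_2=-\frac{1}{2}\Delta+V_2$, while the first channel is already governed by $H_1$. On each channel the required decay then reduces to the single-operator dispersive theory. Here condition (ii) of Definition \ref{defi-1} (absence of a zero eigenvalue or resonance) is exactly what the Kato--Jensen machinery needs to produce the weighted $\langle t-s\rangle^{-n/2}$ decay; reformulated against $L^2$ conservation this yields the weak bound \eqref{weak estimate}, in which the $L^2$ summand absorbs the non-dispersive contributions and the $L^\infty$ summand carries the dispersion. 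Under the additional hypothesis $\widehat{V_1},\widehat{V_2}\in L^1$, the stronger single-center estimate $\|e^{-itH_\kappa}P_{ac}(H_\kappa)\|_{L^1\to L^\infty}\le C|t|^{-n/2}$ of Journ\'e--Soffer--Sogge type is available, and feeding it through the same decomposition upgrades the conclusion to the full $L^1\to L^\infty$ estimate \eqref{ex-eq-1}.

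The role of the asymptotic orthogonality hypothesis $f\in\mathscr{A}(s)$ is to guarantee that the bound-state components $P_b(H_1,t)U(t,s)f$ and $P_b(H_2,t)U(t,s)f$ do not obstruct dispersion: these are precisely the parts that fail to decay for a single fixed operator, and the condition \eqref{eq-4} forces them to vanish as $t\to\infty$. Quantitatively, I would propagate this smallness through a Duhamel iteration against the interaction $V_1\,V_2(\cdot-\vec{e}_1\tau)$, using that the two centers separate at unit speed so that the overlap of the exponentially localized potentials $V_1$ and $V_2(\cdot-\vec{e}_1\tau)$ is exponentially small in $\tau$. This makes all cross-channel error terms—those generated by the interaction and by the commutators of $H(t)$ with the cutoffs—integrable in time and, crucially, uniformly controlled in $s$.

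The main obstacle is closing this iteration uniformly in both $t$ and $s$ through the collision region near $\tau=0$, where the two trajectories are close and the channels genuinely interact. Away from that window the linear separation and exponential localization render the coupling harmless, but near it one must verify that the bootstrap constant stays bounded and that the bound-state contributions remain controlled rather than re-exciting across channels; this is the technical heart carried out in \cite{RSS1, Cai}. It is also the point at which working with the scattering-state projection $P_c(s)$ onto $\mathscr{A}(s)$, rather than the instantaneous $P_{ac}(H(t))$ of \eqref{th}, becomes essential, since only the former is compatible with the propagator $U(t,s)$ in a way that survives the iteration.
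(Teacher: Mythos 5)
The paper does not actually prove Proposition \ref{ex-pro-1}: it is imported verbatim from the literature. The weak estimate (\ref{weak estimate}) and the $L^1\cap L^2\to L^\infty$ bound are quoted from Rodnianski--Schlag--Soffer \cite{RSS1}, the $L^1\to L^\infty$ estimate (\ref{ex-eq-1}) from Cai \cite{Cai}, and the authors' only added content is the remark that the argument of \cite{RSS1} (done there for $s=0$) carries over with bounds uniform in the initial time $s$. Measured against that, your proposal is a faithful high-level reconstruction of the strategy of \cite{RSS1}: time-dependent channel cutoffs adapted to the two trajectories, Galilei conjugation to reduce the moving channel to the stationary operator $H_2$, single-center dispersive/Kato--Jensen decay under the no-eigenvalue/no-resonance hypothesis, and a Duhamel bootstrap exploiting the exponentially small overlap of $V_1$ and $V_2(\cdot-\vec e_1\tau)$ for large $\tau$. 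You also correctly identify where the real work lies (closing the bootstrap uniformly through the collision region) and honestly defer it to \cite{RSS1, Cai}, which is exactly what the paper itself does.

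Two caveats. First, as written your text is a plan rather than a proof: none of the channel estimates is actually closed, so it cannot be judged complete on its own terms --- though it is no less complete than the paper's treatment of this particular statement. Second, your closing claim that the projection $P_c(s)$ onto $\mathscr A(s)$ is ``essential'' to surviving the iteration is anachronistic: the argument of \cite{RSS1} does not use $P_c(s)$ at all (it works directly with the asymptotic orthogonality condition (\ref{eq-4}) and localized initial data); the systematic use of $P_c(s)$ and its intertwining property (\ref{com-identity}) is precisely the new ingredient of the present paper, introduced \emph{after} Proposition \ref{ex-pro-1} in order to derive the Strichartz estimates from it. You should not build $P_c(s)$ into the proof of the decay estimate itself.
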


 We remark that the weak estimate (\ref{weak estimate}) was first proved in Rodnianski, Schlag and Soffer \cite{RSS1}, and the stronger $L^1\cap L^2\rightarrow L^\infty$ decay estimate
$
\|U(t,s)f\|_{L^{\infty}}\le C |t-s|^{-n/2}\|f\|_{L^{1}\cap L^{2}}
$
can also be found there. The $L^1\rightarrow L^\infty$ decay estimate (\ref{ex-eq-1}) was finally established in Cai \cite{Cai}. Notice that only the case $s=0$ was considered in \cite{RSS1} and \cite{Cai}, it is easy to check that the proof process in \cite{RSS1} can work well and have uniformly bounds for all initial value time $s$.

 Note that considering potentials that are moving and time-dependent, (Non-endpoint or endpoint) Strichartz estimates for charge transfer models can't be directly deduced by the $L^1-L^\infty$ decay estimate (\ref{ex-eq-1}) and the $T^{\ast}T$ duality method, as usually done in Journ\'{e}, Sogge and Soffer \cite{JSS1, JSS2}, Ginibe-Velo \cite {GV} and Kee-Tao \cite {KT} et al. In this paper we will prove the full Strichartz estimates for the (scalar and matrix) charge transfer models based on the  $L^1\cap L^2\rightarrow L^\infty$ decay estimates, not necessarily $L^{1}\rightarrow L^{\infty}$ estimates and the analysis of $P_c(s)$ as the projections on the scattering states.
Indeed, the use of $P_{c}(s)$ greatly simplifies the analysis of time dependent Hamiltonians. As we will show, this family of projection operators $P_{c}(s)$ intertwines in a nice way with the full dynamical operators $U(t,s)$:
\begin{eqnarray}\label{com-identity}
P_c(t)U(t,s)=U(t,s)P_c(s), \ t,\ s\in \mathbb{R}.
\end{eqnarray}
The crucial asymptotic completeness for the charge transfer models can be used to control uniformly the projections $P_c(s)$  in some weighted
spaces (see Proposition \ref{lem-1} of Section 2). This is then used together with Duhamel and reversed Duhamel
representation of the solution of the charge transfer problem, to prove local decay estimates for such dynamics (this was not proved or
used in Rodnianski, Schlag and Soffer \cite{RSS1, RSS2}), and then prove the endpoint Strichartz
estimates. The arguments above then allow us to prove the Strichartz estimates from a
$L^1\cap L^2\rightarrow L^\infty$ bounds and avoid the $T^{\ast}T$ duality method.
We mention that previously, the instantaneous projections $P_{ac}(H(t))$ on the continuous spectral part
of the time-dependent Hamiltonian $H(t)$  were used to study dispersive estimates. To get favorable commutation of these instantaneous
projections, one needs to modify the dynamics by extra commutator term, as
utilized by Perelman \cite{Per1}, see also \cite{Bam2, Bec3, CucMa1}. This commutator correction was done by following a
similar construction of Kato \cite{Kato1}, in his studies of the adiabatic theorem.

Now we state one of our main results:
\begin{thm}\label{thm1}
Let $U(t)\psi_0:=U(t,0)\psi_0$ be the solution of the equation (\ref{main-equation}) with an initial value $\psi_0\in \mathscr{A}(0)$ at $t=0$. Then one has the homogeneous Strichartz estimates
\begin{eqnarray}\label{Ho-S}
\|U(t)\psi_0\|_{L_{t}^{p}L_{x}^{q}}\lesssim \|\psi_{0}\|_{L^{2}}
\end{eqnarray}
where the admissible pair $(p,q)$ satisfies
\begin{eqnarray}\label{pair}
\frac{2}{p}=\frac{n}{2}-\frac{n}{q},\ \ \ \ \ 2\leq p\leq\infty, \ n\ge 3.
\end{eqnarray}
 An analogous statement holds for the equation (\ref{Sch eq}) with any finite number of moving potentials.
\end{thm}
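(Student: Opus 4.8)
The natural first attempt — applying the Keel--Tao $T^{\ast}T$ machinery directly to the propagator $U(t,s)$ — runs into the obstruction flagged in the introduction: the only dispersive input at our disposal (Proposition \ref{ex-pro-1}) is the \emph{weak} bound, with source norm $\|\cdot\|_{L^1\cap L^2}$ and target $L^2+L^\infty$, so interpolation against the energy bound $\|U(t,s)f\|_{L^2}\lesssim\|f\|_{L^2}$ only yields
\[
\|U(t,s)f\|_{L^q}\lesssim |t-s|^{-2/p}\big(\|f\|_{L^{q'}}+\|f\|_{L^2}\big),\qquad f\in\mathscr{A}(s),
\]
and the residual $\|f\|_{L^2}$ term cannot be absorbed by the Hardy--Littlewood--Sobolev or bilinear argument. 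I would therefore avoid $T^{\ast}T$ for $U(t,s)$ altogether and instead transfer the problem to the free flow by Duhamel.

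\textbf{Reduction by Duhamel.} Since $\psi_0\in\mathscr{A}(0)$ we have $P_c(0)\psi_0=\psi_0$, and by the intertwining identity (\ref{com-identity}) the solution $U(t,0)\psi_0=U(t,0)P_c(0)\psi_0$ stays in the scattering subspace for all $t$, so Proposition \ref{ex-pro-1} and the weighted bounds for $P_c$ of Section 2 apply throughout. Writing $V(s):=V_1+V_2(\cdot-\vec e_1 s)$, Duhamel's formula against the free propagator gives
\[
U(t,0)\psi_0=e^{it\Delta/2}\psi_0-i\int_0^t e^{i(t-s)\Delta/2}\,V(s)\,U(s,0)\psi_0\,ds.
\]
For the first term the classical homogeneous Strichartz estimate for $e^{it\Delta/2}$ gives $\lesssim\|\psi_0\|_{L^2}$ for every admissible $(p,q)$. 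For the second term I would invoke the inhomogeneous Strichartz estimate for the \emph{free} evolution — valid up to and including the double endpoint by Keel--Tao — with source pair $(\tilde p,\tilde q)=(2,2n/(n-2))$, reducing matters to
\[
\big\|\,V(s)\,U(s,0)\psi_0\,\big\|_{L^2_s L^{\,2n/(n+2)}_x}\lesssim\|\psi_0\|_{L^2}.
\]
This is the only place where the perturbed dynamics enters, and it uses the free Strichartz theory only, never a $T^{\ast}T$ argument for $U(t,s)$.

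\textbf{Local decay as the crux.} Because $V_1,V_2$ are exponentially localized, H\"older's inequality lets me replace $V(s)$ by the weights $\langle x\rangle^{-\sigma}$ and $\langle x-\vec e_1 s\rangle^{-\sigma}$, so the displayed bound follows from a multi-centered \emph{local (smoothing) decay estimate} for the charge transfer dynamics on scattering states,
\[
\big\|\langle x\rangle^{-\sigma}U(s,0)P_c(0)\psi_0\big\|_{L^2_s L^2_x}+\big\|\langle x-\vec e_1 s\rangle^{-\sigma}U(s,0)P_c(0)\psi_0\big\|_{L^2_s L^2_x}\lesssim\|\psi_0\|_{L^2}.
\]
Establishing this is the heart of the matter and the step I expect to be the main obstacle: one must control the solution simultaneously in both channels while the two wells drift apart, with no elliptic resolvent available for the time-dependent $H(t)$. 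My plan is to prove it as indicated in the introduction — combining the Kato--Jensen type estimates and asymptotic completeness (which furnish the uniform weighted control of the projections $P_c(s)$, Proposition \ref{lem-1}) with the Duhamel and reversed-Duhamel representations and the weak decay (\ref{weak estimate}); the hypothesis $n\ge 3$ guarantees the resulting on-diagonal rate $\langle s\rangle^{-n/2}$ is square-integrable in time, which is precisely what closes the $L^2_s$ estimate and, crucially, survives at the endpoint $p=2$ where Christ--Kiselev is unavailable. Finally, since the Galilei transforms (\ref{Ga-1}) are isometries on every $L^p$ and hence preserve all space--time Strichartz norms, the reduction to $m=2$ with $\vec v_1=0,\ \vec v_2=\vec e_1$ is genuinely without loss of generality, and the same scheme — Duhamel, free Strichartz, and multi-centered local decay — applies verbatim to any finite number of moving potentials.
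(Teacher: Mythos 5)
Your proposal follows essentially the same route as the paper: Duhamel against the free flow, the free (endpoint) inhomogeneous Strichartz estimate with dual source pair $(2,2n/(n+2))$, and reduction to a weighted $L^2_tL^2_x$ local decay bound for the perturbed dynamics on scattering states. The local decay step you defer as "the crux" is carried out in the paper exactly along the lines you indicate: a Kato--Jensen bound $\big\|\langle x-D(t)\rangle^{-\sigma}U(t,t_0)P_c(t_0)\langle x-D(t_0)\rangle^{-\sigma}\big\|_{L^2\to L^2}\lesssim\langle t-t_0\rangle^{-n/2}$ obtained by sandwiching the $L^1\to L^\infty$ decay of Proposition \ref{ex-pro-1} between the weights, followed by a reversed Duhamel expansion of $P_c(t)U(t)\psi_0$ in which the free term is controlled by Proposition \ref{lem-1} together with free local smoothing, and the integral term is closed by Young's inequality in $t$ using the integrability of $\langle t-s\rangle^{-n/2}$ as a kernel on $L^2_t$ for $n\ge 3$, just as you anticipate.
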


Note that $P_c(t)U(t,0)f=U(t,0)P_c(0)f$ for any $f\in L^2$, the homogeneous Strichartz estimate (\ref{Ho-S}) is equivalent to the following operator form
\begin{eqnarray}\label{Ho-S'}
\|P_c(t)U(t)f\|_{L_{t}^{p}L_{x}^{q}}\lesssim \|f\|_{L^{2}}, \ \ f\in L^2,
\end{eqnarray}
which shows exactly that the scattering part of any solution $U(t)f$ satisfies  dispersive estimates. It actually happens even for equation with a source term $F(t, x)$.
Now consider the following nonhomogeneous charge transfer model:
\begin{eqnarray}
\left\{%
\begin{array}{ll}\label{main-equation-21}
i\partial_{t}\psi=-\frac{1}{2}\Delta\psi+V_{1}\psi+V_{2}(x-\vec{e}_{1}t)\psi+F(t,x)\\
\psi(0,\cdot)=\psi_{0}.\\
\end{array}%
\right.
\end{eqnarray}

\begin{thm}\label{thm2}
Let $\psi(t)$ be the solution of the equation (\ref{main-equation-21}) with any initial date $\psi_{0}\in L^2$ and  $(p,q)$ and $(\tilde{p},\tilde{q})$ are any two admissible pairs satisfying (\ref{pair}). If $F\in L^{\tilde{p}'}_{t}L^{\tilde{q}'}_{x} $, then the solution $\psi(t)$ satisfies the following nonhomogeneous Strichartz estimates
\begin{eqnarray}\label{NHS}
\|P_{c}(t)\psi(t)\|_{L^{p}_{t}L^{q}_{x}}\lesssim \|\psi_{0}\|_{L^{2}}+\|F\|_{L^{\tilde{p}'}_{t}L^{\tilde{q}'}_{x}}.
\end{eqnarray}
A similar statement also holds for the equation (\ref{main-equation-21}) with multi-moving potentials.
\end{thm}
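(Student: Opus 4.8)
The plan is to combine a Duhamel representation with the already-established homogeneous estimate of Theorem~\ref{thm1}, reducing the matter to a single \emph{truncation} step. Writing the solution of (\ref{main-equation-21}) as
\begin{equation*}
\psi(t)=U(t,0)\psi_0-i\int_0^t U(t,\tau)F(\tau)\wrt\tau
\end{equation*}
and applying $P_c(t)$, the homogeneous part is disposed of at once: by the intertwining identity (\ref{com-identity}) one has $P_c(t)U(t,0)\psi_0=U(t,0)P_c(0)\psi_0$ with $P_c(0)\psi_0\in\mathscr{A}(0)$, so Theorem~\ref{thm1} gives $\|P_c(t)U(t,0)\psi_0\|_{L^p_tL^q_x}\lesssim\|P_c(0)\psi_0\|_{L^2}\le\|\psi_0\|_{L^2}$. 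Using (\ref{com-identity}) once more, it remains to control
\begin{equation*}
\Phi(t):=\int_0^t P_c(t)U(t,\tau)F(\tau)\wrt\tau=\int_0^t U(t,\tau)P_c(\tau)F(\tau)\wrt\tau
\end{equation*}
in $L^p_tL^q_x$ by $\|F\|_{L^{\tilde p'}_tL^{\tilde q'}_x}$.

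The key preliminary step is the \emph{untruncated} (full-line) estimate, in which $\int_0^t$ is replaced by $\int_{\R}$; this follows from Theorem~\ref{thm1} alone by duality, with no additional input. Let $T\colon L^2\to L^{\tilde p}_tL^{\tilde q}_x$, $Tf=P_c(\cdot)U(\cdot,0)f$, be the homogeneous solution map for the pair $(\tilde p,\tilde q)$, bounded by Theorem~\ref{thm1}. Since $P_c(\tau)$ is an orthogonal projection and $U(t,\tau)^{*}=U(\tau,t)$, its adjoint is $T^{*}G=\int_{\R}U(0,\tau)P_c(\tau)G(\tau)\wrt\tau$, so that $g:=T^{*}F$ satisfies $\|g\|_{L^2}\lesssim\|F\|_{L^{\tilde p'}_tL^{\tilde q'}_x}$. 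By (\ref{com-identity}) one checks $g=P_c(0)g\in\mathscr{A}(0)$ and $\int_{\R}U(t,\tau)P_c(\tau)F(\tau)\wrt\tau=U(t,0)g=P_c(t)U(t,0)g$; applying Theorem~\ref{thm1} now for the pair $(p,q)$ yields
\begin{equation*}
\Big\|\int_{\R}U(t,\tau)P_c(\tau)F(\tau)\wrt\tau\Big\|_{L^p_tL^q_x}\lesssim\|g\|_{L^2}\lesssim\|F\|_{L^{\tilde p'}_tL^{\tilde q'}_x}.
\end{equation*}

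Finally I would pass from the full line to the retarded integral $\Phi(t)$. Whenever $\tilde p'<p$ --- that is, in every case except the double endpoint $p=\tilde p=2$ --- this is immediate from the Christ--Kiselev lemma applied to the bounded full-line operator with kernel $(t,\tau)\mapsto U(t,\tau)P_c(\tau)$, since restricting to $\{\tau<t\}$ costs only a constant. The double endpoint $p=\tilde p=2$, $q=\tilde q=\frac{2n}{n-2}$ is the genuine obstacle, Christ--Kiselev being inapplicable there. For it I would argue directly on the bilinear form
\begin{equation*}
B(F,G)=\int_{\R}\int_{\tau<t}\big\langle U(t,\tau)P_c(\tau)F(\tau),\,G(t)\big\rangle\wrt\tau\wrt t,
\end{equation*}
decomposing dyadically in $t-\tau$ and interpolating the unitary $L^2\to L^2$ bound against the $\langle t-\tau\rangle^{-n/2}$ time-decay of Proposition~\ref{ex-pro-1} restricted to the range of $P_c$, in the spirit of Keel--Tao. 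The main difficulty is precisely that at $p=2$ the interpolated decay exponent is the non-integrable $|t-\tau|^{-1}$, and moreover Proposition~\ref{ex-pro-1} supplies only the weak $L^1\cap L^2\to L^2+L^\infty$ bound rather than a clean $L^{\tilde q'}\to L^{\tilde q}$ estimate; I therefore expect the dyadic pieces to be summed not by a direct interpolation but by invoking the local decay estimates for $U(t,s)P_c(s)$ (the Kato--Jensen type bounds of Section~\ref{}), which is where asymptotic completeness enters. Once the double endpoint is secured, the bound for $\Phi$ in $L^p_tL^q_x$ holds for all admissible pairs and the proof is complete; the multi-potential case is identical, the only change being that $P_c(t)$ now projects off the bound states of every moving channel.
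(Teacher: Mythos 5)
Your reduction of the homogeneous part and your duality argument for the untruncated operator $\int_{\mathbb R}U(t,\tau)P_c(\tau)F(\tau)\,d\tau$ are correct, and the Christ--Kiselev step does dispose of every admissible pair with $\tilde p'<p$. But the one case that remains, the double endpoint $p=\tilde p=2$, is precisely the content advertised in the title of the paper, and your treatment of it is not a proof: the Keel--Tao bilinear/dyadic argument you sketch requires the full interpolation family of $L^{r'}\to L^{r}$ bounds with decay $|t-\tau|^{-n(1/2-1/r)}$ for the composed operator $U(t)P_c(t)U^{*}(\tau)P_c^{*}(\tau)$, i.e. essentially the estimate (\ref{ex-eq-4}), which the introduction explicitly flags as highly non-trivial and which is \emph{not} supplied by Proposition \ref{ex-pro-1} (that proposition only gives the weak $L^1\cap L^2\to L^2+L^\infty$ bound in general; the clean $L^1\to L^\infty$ bound needs the extra hypothesis $\widehat{V_\kappa}\in L^1$ and still does not yield the two-sided projected kernel bound). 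Your fallback remark that the dyadic pieces could instead be summed ``by invoking the local decay estimates'' is exactly where the real work lies, and it is left undone. So the proposal proves the non-endpoint cases but leaves a genuine gap at the endpoint.

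For contrast, the paper never passes through the untruncated operator or Christ--Kiselev at all. It first proves a weighted local decay estimate for the retarded source term, namely (\ref{eq-15}), $\|\langle x-D(t)\rangle^{-\sigma}\int_0^t U(t,s)P_c(s)F(s)\,ds\|_{L^2_tL^2_x}\lesssim\|\psi_0\|_{L^2}+\|F\|_{L^{\tilde p'}_tL^{\tilde q'}_x}$, by comparing with the free flow via Duhamel and using the Kato--Jensen bound (\ref{eq-9-2}) together with Proposition \ref{lem-1}. It then rewrites the projected equation (\ref{eq-14'}) as a \emph{free} Schr\"odinger equation with source $(V_1+V_2(\cdot-\vec e_1 s))P_c(s)\psi(s)+P_c(s)F(s)$ and applies the free inhomogeneous endpoint estimate (\ref{s-f'}) of Keel--Tao, controlling the potential term in $L^2_tL^{6/5}_x$ by the local decay of Step 4. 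In this way the retarded integral and the double endpoint are handled entirely by the free theory, and no dispersive bound for the perturbed propagator beyond $L^1\cap L^2\to L^2+L^\infty$ is ever needed. If you want to salvage your route, you would have to either prove (\ref{ex-eq-4}) or import the paper's local-decay mechanism for the endpoint case, at which point you may as well run the paper's argument for all pairs.
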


\vskip0.2cm As we mentioned before, our methods also work well for matrix charge transfer models, which are related with the N-soliton dynamics and soliton with potential interaction problems on nonlinear Schr\"{o}dinger equations. Let us first introduce the definition of matrix charge transfer model.

\begin{defi}\label{defi-3}
By a {\rm matrix charge transfer model} we mean a system
\begin{eqnarray}
\left\{%
\begin{array}{ll}\label{MSch eq}
i\partial_{t}\vec{\psi}=\left(
  \begin{array}{cc}
    -\frac{1}{2}\Delta & 0 \\
    0 & \frac{1}{2}\Delta \\
  \end{array}
\right)\vec{\psi}+\sum^{m}_{\kappa=1}V_{\kappa}(t,x-\vec{v}_{\kappa}t)\vec{\psi}\\
\vec{\psi}(0,\cdot)=\vec{\psi}_{0},\ \ x\in \mathbb{R}^{n},\\
\end{array}
\right.
\end{eqnarray}
where $\vec{v}_{j}\ (j=1,2,\cdots, m)$ are distinct vectors in $\mathbb{R}^{n}$ ($n\geq 3$) and $V_{\kappa}\ (k=1,2,\cdots, m)$ are matrix potentials of form
\begin{eqnarray}
V_{\kappa}(t,x)=\left(
  \begin{array}{cc}
    U_{\kappa}(x) & -e^{i\theta_{\kappa}(t,x)}W_{\kappa}(x) \\
    e^{-i\theta_{\kappa}(t,x)}W_{\kappa}(x) & -U_{\kappa}(x) \\
  \end{array}
  \right)
\end{eqnarray}
where $U_{\kappa}$, $W_{\kappa}$  are some localized smooth functions and
\begin{eqnarray}\theta_{\kappa}(t,x)=(|\vec{v}_{\kappa}|^{2}+\alpha^{2}_{\kappa})t+2x\cdot \vec{v}_{\kappa}+\gamma_{\kappa},\ \alpha_{\kappa},\ \gamma_{\kappa}\in \mathbb{R},\ \alpha_{\kappa}\neq0.
\end{eqnarray}
\end{defi}

 Definition above may seem more complex compared to the scalar charge transfer model. In fact,  these systems arise in the study of stability of multi-soliton states or soliton-potential interaction of nonlinear Schr\"{o}dinger equations, see e.g. \cite{CucMa1, CuMa2, RSS2}. In \cite{RSS1},  Rodnianski, Schlag and Soffer also have established the $L^1\cap L^2\rightarrow L^\infty$ estimates for the matrix charge transfer model (\ref{MSch eq}) under the certain assumptions on the initial date $\vec{\psi}_{0}$ and the spectra of matrix type Schr\"odinger operators:
 \begin{eqnarray}\label{eq-34'}
\mathscr{H}_{\kappa}=\left(
  \begin{array}{cc}
    -\frac{1}{2}\Delta+\frac{1}{2}\alpha^{2}_{\kappa}+U_{\kappa} & -W_{\kappa} \\
    W_{\kappa} &  \frac{1}{2}\Delta-\frac{1}{2}\alpha^{2}_{\kappa}-U_{\kappa} \\
  \end{array}
  \right), \ \ \ \ 1\leq\kappa\leq m.
\end{eqnarray}
Hence we want to obtain endpoint Strichartz estimates for the matrix model by similar arguments as done for the scalar case.  We should remark that Strichartz estimates for matrix models proved here will be crucial for the further studies of N-soliton dynamics and soliton+potential perturbation problems. It allows us to study such problems with fairly minimal regularity estimates and assumptions. Another important aspect is that it simplifies the needed decay  estimates to control the modulation equations of the soliton parameters. Strichartz estimates may allow the removal of localization and smoothness of the initial data in problems of asymptotic stability of soliton dynamics.

In the introduction, we don't attempt to state the results on Strichartz estimates for the matrix model (\ref{MSch eq}) in order to avoid to introduce too many  notations Comparing with the scalar model (\ref{Sch eq}), we remark that the matrix propagator operator $\mathcal {U}(t,s)$ is not unitary on $L^2$ and $\mathscr{H}_{\kappa}$ is non-selfadjoint, which can lead to some additional difficulties than the scalar case.  We will address these  results in the whole section 3.

 \subsection{Further remarks}
Strichartz estimates for Schr\"odinger operator $-\Delta+V(x)$ have been widely studied by many people. It is hard to collect all related references. Let us mention some works on Strichartz estimates
based on various somehow different approaches, for instance, see Journ\'{e}, Soffer and Sogge \cite{JSS1, JSS2} (non-endpoint case) and Keel-Tao \cite {KT} (endpoint case) using $T^*T$ arguments by $L^1$-$L^\infty$ decay estimates, Yajima \cite{Ya2} by wave operator method,  Rodnianski and Schlag \cite{RoSc} using local decay estimates,  Beceanu \cite{Bec3} by  abstract Wiener Lemma,  Bouclet and Mizutani \cite{BM} recently by uniform Sobolev estimate and see Schlag's review paper \cite{Sch2}. Some methods also were extended to obtain Strichartz estimates for certain time-dependent potential Hamiltonians, see e.g. Goldberg \cite{Go}, Beceanu \cite{Bec3} and  Beceanu and Soffer \cite{BeSo1, BeSo2}. Of course, we remark that these results can not cover our case of more than one such moving potential.

For the matrix non-selfadjoint operator $\mathscr{H}$ as in (\ref{eq-34'}), there exists a great number of papers related to them, see e.g \cite{AS, AFS, Bam1, Bec2, BJ, Cuc1, CuMi, ES, FJGS, GNP, HM, MM1, MMT, Per2, Per1, Per5, Per4, RSS1, RSS2, Sch, SZ} and therein references, where all kinds of dispersive estimates for matric operator were discussed and used to study soliton scattering problems. As for Strichartz estimates, Schlag in \cite{Sch} got non-endpoint Strichartz estimates for a non-selfadjoint Hamiltonian and Beceanu \cite{Bec2} further obtained the endpoint one. Cuccagna and Mizumatchi \cite{CuMi} also established optimal Strichartz estimates for some matrix models based on the matrix wave operator methods from  Cuccagna \cite{Cuc1}, originally motivated by Yajima \cite{Ya2} for Schr\"odigner operator. More recently, the works \cite{Bec3, BeSo1, BeSo2} also deal with  some time-dependent matrix potentials, but none in all these studies above can obtain the endpoint Strictures estimates for matrix charge transfer model (\ref{MSch eq}). Actually, it is possible to use the idea of Beceanu \cite{Bec2} to get the endpoint Strictures estimates for matrix charge transfer model (\ref{MSch eq}). Precisely, one may obtain the desire estimates by showing
\begin{eqnarray}\label{ex-eq-4}
\|U(t)P_{c}(t)U^{\ast}(s)P^{\ast}_{c}(s)\|_{L^{1}\rightarrow L^{\infty}}\lesssim |t-s|^{-\frac{n}{2}}
\end{eqnarray}
and then applying Keel-Tao's argument. However, the proof for (\ref{ex-eq-4}) is highly non-trivial and may need much more complex computation than our method. Moreover, the wave operator argument as in \cite{Cuc1, CuMi} is also not an option for us since the construct for the inverse wave operator and the intertwining properties are all unknown for matrix charge transfer model (\ref{MSch eq}). Thus in this paper, we adopt a different approach which is based on decay estimate and AC (asymptotic completeness).

Finally, we mention the works of Cuccagna and Meada \cite{CucMa1, CuMa2} where weak Strichartz estimates are proved for a related charge transfer Hamiltonian model with stronger assumptions on the inhomogeneous term, The equations in \cite{CucMa1, CuMa2} are modified using idea of \cite{Bec3} to project on the scattering states of one fixed operator, the analysis is therefore more involved and different from our approach. It is not clear if the result of  \cite{CucMa1, CuMa2}  holds for all initial scattering states of the original charge transfer problem. For example in Theorem 7.1 of \cite{CuMa2}, their estimate holds only for initial data in the range of $P_{ac}(\mathscr{H}_{1})$.  Furthermore, they need the  much stronger bound on the inhomogeneous term
$
\big\|F\big\|_{L^{2}_{t}L^{2,\sigma}_{x}+L^{1}_{t}L^{2}_{x}}
$
while we use the optimal bound by $\|F\big\|_{L^{2}_{t}L^{\frac{6}{5}}_{x}}$ (for $n=3$).

The paper is organized as follows: Section 2 is devoted to the endpoint Strichartz estimates for scalar charge transfer model. In section 3, we will first prove the asymptotic completeness for matrix charge transfer model and apply it to show the endpoint Strichartz estimates of matrix case.

{\bf Acknowledgements:} The first author is supported by NSFC (No. 11301203) and the Fundamental Research Funds for the Central Universities (CCNU-14A05037). The second author is partially supported by NSF DMS-1201394 and NSFC (No. 11671163).  The third author is supported by NSFC (No. 11371158) and the program for Changjiang Scholars and Innovative Research Team in University (No. IRT13066). Part of this work was done when the first author visited the Rutgers University and the second author visited  CCNU.

\section{The Strichartz estimates for scalar charge transfer model}

\setcounter{equation}{0}
\subsection{The projections $ P_c(s)$ and asymptotic completeness}
%
%
%

We begin with the results on asymptotic completeness of the charge transfer problem (\ref{main-equation}). Let $s\in \mathbb{R}$ and
\begin{eqnarray}
\Omega^{-}_{0}(s)=s-\lim_{t\rightarrow+\infty}U(s,t)e^{-iH_{0}(t-s)},\nonumber
\end{eqnarray}
\begin{eqnarray}
\Omega^{-}_{1}(s)=s-\lim_{t\rightarrow+\infty}U(s,t)e^{-iH_{1}(t-s)}P_{b}(H_{1}),\nonumber
\end{eqnarray}
\begin{eqnarray}
\Omega^{-}_{2}(s)=s-\lim_{t\rightarrow+\infty}U(s,t)G_{-\vec{e_{1}}}(t)e^{-iH_{2}(t-s)}P_{b}(H_{2})G_{\vec{e_{1}}}(s)\nonumber
\end{eqnarray}
be the wave operators, where $H_{0}=-\frac{1}{2}\Delta$ and $U(t,s)$ is the propagator of (\ref{main-equation}). Then one has the following conclusion (See e.g. \cite{Gr}).
\begin{pro}\label{ex-pro-2}
Let $s\in \mathbb{R}$, the following assertions hold.

{\rm (i)} For each $s\in \mathbb{R}$, the above wave operators $\Omega^{-}_{\kappa}(s)$ $(\kappa=0,1,2)$ exist in $L^2$.

{\rm (ii)}  The ranges
Ran$\Omega^{-}_{\kappa}(s)$ ($\kappa=0,1,2$) are closed and orthogonal to each other.

{\rm (iii)}  The asymptotic completeness:
\begin{eqnarray}
L^{2}={\rm Ran}\Omega^{-}_{0}(s)\oplus{\rm Ran}\Omega^{-}_{1}(s)\oplus{\rm Ran}\Omega^{-}_{2}(s).\nonumber
\end{eqnarray}
\end{pro}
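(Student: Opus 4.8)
The plan is to dispatch existence (i) and the isometry/orthogonality content of (ii) by direct time‑dependent arguments, and to reserve the real work for completeness (iii). For existence I would use Cook's method. Abbreviate the comparison dynamics by $A_0(t)=e^{-iH_0(t-s)}$, $A_1(t)=e^{-iH_1(t-s)}P_b(H_1)$ and $A_2(t)=G_{-\vec{e_1}}(t)e^{-iH_2(t-s)}P_b(H_2)G_{\vec{e_1}}(s)$, and note $A_2(t)=W(t,s)P_b(H_2,s)$, where $W(t,s)=G_{-\vec{e_1}}(t)e^{-iH_2(t-s)}G_{\vec{e_1}}(s)$ is the propagator of the single moving potential $H_2(t)=-\frac12\Delta+V_2(\cdot-\vec{e_1}t)$. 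Using $i\partial_tU(s,t)=-U(s,t)H(t)$ and the Schr\"odinger equations solved by the $A_\kappa$, one gets
\[
\frac{d}{dt}\big(U(s,t)A_\kappa(t)f\big)=iU(s,t)\big(H(t)-H_\kappa(t)\big)A_\kappa(t)f ,
\]
with $H_0(t)=H_0$, $H_1(t)=H_1$. Since $U(s,t)$ is unitary, existence reduces to integrability of $t\mapsto\|(H(t)-H_\kappa(t))A_\kappa(t)f\|_{L^2}$ on a dense set. For $\kappa=0$ the factor $H(t)-H_0=V_1+V_2(\cdot-\vec{e_1}t)$ has each $V_j\in L^2$, and the free dispersive bound $\|A_0(t)f\|_{L^\infty}\lesssim\langle t-s\rangle^{-n/2}\|f\|_{L^1}$ makes the integrand $O(\langle t-s\rangle^{-n/2})$, integrable for $n\geq3$. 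For $\kappa=1,2$ the difference is $V_2(\cdot-\vec{e_1}t)$, resp. $V_1$, acting on a state exponentially localized near the origin, resp. near $x\approx\vec{e_1}t$; the two relevant centers are separated by a distance of order $|t|$, so $\|(H(t)-H_\kappa(t))A_\kappa(t)f\|_{L^2}\lesssim e^{-c|t|}$, again integrable.

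For (ii), each $A_\kappa(t)$ is an isometry on its natural domain (all of $L^2$ for $\kappa=0$, and the finite‑dimensional ranges of $P_b(H_1)$ and $P_b(H_2,s)$ for $\kappa=1,2$), so the strong limit $\Omega^{-}_{\kappa}(s)$ is again an isometry; this makes $\mathrm{Ran}\,\Omega^{-}_{1}(s)$ and $\mathrm{Ran}\,\Omega^{-}_{2}(s)$ finite‑dimensional and $\mathrm{Ran}\,\Omega^{-}_{0}(s)$ a closed isometric image, all closed. Orthogonality follows by cancelling the unitary $U(s,t)$: for $j\neq k$,
\[
\langle\Omega^{-}_{j}(s)f,\Omega^{-}_{k}(s)g\rangle=\lim_{t\to\infty}\langle A_j(t)f,A_k(t)g\rangle .
\]
When $\{j,k\}$ contains $0$, one factor disperses ($\|A_0(t)f\|_{L^\infty}\to0$) while the other stays $L^1$‑bounded and spatially concentrated, so the pairing vanishes; when $\{j,k\}=\{1,2\}$ the two factors are exponentially localized about the separating centers $0$ and $\vec{e_1}t$, giving overlap $\lesssim e^{-c|t|}\to0$.

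The heart of the matter is completeness (iii). Granting (ii), it suffices to show that the orthogonal complement of $\mathrm{Ran}\,\Omega^{-}_{0}(s)\oplus\mathrm{Ran}\,\Omega^{-}_{1}(s)\oplus\mathrm{Ran}\,\Omega^{-}_{2}(s)$ is trivial, equivalently that the three channel projections sum to the identity. I would establish this by the time‑dependent (geometric) method: construct the Deift--Simon inverse wave operators $s\text{-}\lim_{t\to\infty}A_\kappa(t)^{*}U(t,s)$ and show that for every $f\in L^2$ the total mass is asymptotically distributed among the three channels. Concretely, one introduces a smooth partition of configuration space adapted to the two moving centers—a piece supported where $|x|\lesssim R$ (bound to $H_1$), a piece where $|x-\vec{e_1}t|\lesssim R$ (bound to $H_2$), and the complementary escaping region—and proves that, modulo errors integrable in $t$, the evolution is carried into these three pieces. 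The near‑center pieces are shown to converge to the bound‑state channels by a RAGE‑type argument exploiting exponential localization, while the escaping piece is asymptotically free and lands in $\mathrm{Ran}\,\Omega^{-}_{0}(s)$.

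The main obstacle, and the technical core on which the partition argument rests, is a minimal‑velocity (non‑trapping) propagation estimate showing that the part of $U(t,s)f$ binding to neither center escapes with velocity bounded away from $0$ and from $\vec{e}_1$, hence spends only an $L^1_t$‑integrable amount of time in the intermediate region between the two separating centers. This is precisely the content supplied by Graf's propagation‑observable construction \cite{Gr}; without it one cannot rule out a residual portion of the state that neither disperses nor localizes at a center, which would destroy the direct‑sum decomposition. Everything else—existence, the isometry property, and orthogonality—is comparatively soft, following from Cook's method, dispersion, and the linear‑in‑$t$ separation of the two scattering centers.
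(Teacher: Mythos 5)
Your proposal is correct, and it is consistent with the paper's treatment: the paper gives no proof of this proposition at all, simply citing Graf \cite{Gr} (and \cite{Ya1, Wu, Zi}) for the existence and completeness of the wave operators. Your Cook's-method computation for (i), the isometry/orthogonality argument for (ii) via unitarity of $U(s,t)$, dispersion of the free channel, and the $O(|t|)$ separation of the two centers are all sound, and you correctly identify that the only genuinely hard ingredient --- the minimal-velocity propagation estimate underlying completeness (iii) --- is exactly what Graf's phase-space analysis supplies, which is precisely the content the paper outsources to that reference.
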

One could see  \cite{Gr, Ya1, Wu, Zi} for the detail of the scattering existences and asymptotic completeness of such model. Recall that the set $\mathscr{A}(s)$ consists of all $f$ such that
\begin{eqnarray}\label{eq-7}
\|P_{b}(H_{1})U(t,s)f\|_{L^2}+\|P_{b}(H_{2},t)U(t,s)f\|_{L^2}\rightarrow0\ \ {\rm as} \ \ t\rightarrow +\infty.
\end{eqnarray}
The following results show that $\mathscr{A}(s)$ is equal to the closed range of first wave operator $\Omega^{-}_{0}(s)$, and the projections $P_c(s)$ on $\mathscr{A}(s)$ have some favorable properties.

\begin{pro}\label{ex-le-1}
Let $t, s\in \mathbb{R}$. Then

{\rm (i)} $\mathscr{A}(s)={\rm Ran}\Omega^{-}_{0}(s)$.

{\rm (ii)} The propagator $U(s,t)$ propagates $\mathscr{A}(t)$
to $\mathscr{A}(s)$, that is,
\begin{eqnarray}\label{commute}
U(s,t)\mathscr{A}(t)=\mathscr{A}(s).
\end{eqnarray}

{\rm (iii)} The propagator $U(s,t)$ commutes with $P_{c}(t)$, that is,
\begin{eqnarray}\label{commute-1}
U(s,t)P_{c}(t)=P_{c}(s)U(s,t).
\end{eqnarray}
\end{pro}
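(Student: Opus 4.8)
The plan is to reduce all three assertions to two ingredients: a family of intertwining identities relating $U(t,s)$ to the three channel dynamics, together with the asymptotic completeness of Proposition~\ref{ex-pro-2}. First I would record, directly from the definitions of the $\Omega_\kappa^-$ and the group law $U(\tau,t)U(t,s)=U(\tau,s)$, the relations
\[
U(\tau,s)\Omega_0^-(s)=\Omega_0^-(\tau)e^{-iH_0(\tau-s)},\qquad U(\tau,s)\Omega_1^-(s)=\Omega_1^-(\tau)e^{-iH_1(\tau-s)},
\]
\[
U(\tau,s)\Omega_2^-(s)=\Omega_2^-(\tau)\,\widetilde U_2(\tau,s),\qquad \widetilde U_2(\tau,s):=G_{-\vec e_1}(\tau)e^{-iH_2(\tau-s)}G_{\vec e_1}(s).
\]
Each follows by inserting $e^{-iH_\kappa(t-s)}=e^{-iH_\kappa(t-\tau)}e^{-iH_\kappa(\tau-s)}$ (respectively $\widetilde U_2(t,s)=\widetilde U_2(t,\tau)\widetilde U_2(\tau,s)$) inside the defining strong limit and commuting the right-hand factor through the relevant bound-state projection. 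Since $e^{-iH_\kappa(\tau-s)}$ and $\widetilde U_2(\tau,s)$ are bijections of $L^2$, these identities already yield $U(\tau,s)\,\mathrm{Ran}\,\Omega_\kappa^-(s)=\mathrm{Ran}\,\Omega_\kappa^-(\tau)$ for $\kappa=0,1,2$, the range identities that drive everything below.

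For part (i) I would decompose an arbitrary $f$ by asymptotic completeness as $f=f_0+f_1+f_2$ with $f_\kappa=\Omega_\kappa^-(s)g_\kappa\in\mathrm{Ran}\,\Omega_\kappa^-(s)$, the three pieces mutually orthogonal. The crucial lemma is the operator-norm collapse
\[
\|\Omega_1^-(t)-P_b(H_1)\|_{L^2\to L^2}\to0,\qquad \|\Omega_2^-(t)-P_b(H_2,t)\|_{L^2\to L^2}\to0\quad(t\to+\infty),
\]
which I would prove by Cook's method: differentiating the interpolating family $\sigma\mapsto U(\tau,\sigma)(\text{channel-}\kappa\text{ evolution})$ applied to a bound state produces the perturbation $V_2(\cdot-\vec e_1\sigma)$ (channel $1$) or $V_1$ (channel $2$) sandwiched against an exponentially localized profile; because the two localization centers — the origin and the travelling point $\vec e_1\sigma$ — separate linearly in $\sigma$, the integrand decays exponentially and the tail from $t$ to $+\infty$ tends to $0$. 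Granting this, $P_b(H_1)$ and $P_b(H_2,t)$ lie within $o(1)$ of the orthogonal projections onto $\mathrm{Ran}\,\Omega_1^-(t)$ and $\mathrm{Ran}\,\Omega_2^-(t)$; combining this with the intertwining identities and the mutual orthogonality of the ranges gives $\|P_b(H_1)U(t,s)f\|_{L^2}\to\|f_1\|_{L^2}$ and $\|P_b(H_2,t)U(t,s)f\|_{L^2}\to\|f_2\|_{L^2}$. Hence $f\in\mathscr{A}(s)$ if and only if $f_1=f_2=0$, that is $\mathscr{A}(s)=\mathrm{Ran}\,\Omega_0^-(s)$. (One could instead quote Graf's identification of $\mathscr{A}(s)$ with the scattering subspace, but the route above additionally supplies the orthogonal splitting needed next.)

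Parts (ii) and (iii) are then formal. Part (ii) is the $\kappa=0$ range identity combined with (i): $U(s,t)\mathscr{A}(t)=U(s,t)\,\mathrm{Ran}\,\Omega_0^-(t)=\mathrm{Ran}\,\Omega_0^-(s)=\mathscr{A}(s)$. For part (iii), asymptotic completeness together with (i) gives $\mathscr{A}(t)^\perp=\mathrm{Ran}\,\Omega_1^-(t)\oplus\mathrm{Ran}\,\Omega_2^-(t)$, and the $\kappa=1,2$ range identities show $U(s,t)$ maps $\mathscr{A}(t)^\perp$ onto $\mathscr{A}(s)^\perp$, while (ii) shows it maps $\mathscr{A}(t)$ onto $\mathscr{A}(s)$. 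A unitary operator that carries an orthogonal subspace and its complement onto the corresponding subspace and complement intertwines the two orthogonal projections, which is exactly the commutation relation $U(s,t)P_c(t)=P_c(s)U(s,t)$.

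I expect the only genuine difficulty to be the operator-norm collapse $\Omega_1^-(t)\to P_b(H_1)$ and $\Omega_2^-(t)\to P_b(H_2,t)$ used in (i): one must manage a double limit — the $\tau\to+\infty$ limit defining each wave operator and the $t\to+\infty$ scattering limit — and justify the exponential decay of the Cook integrand uniformly, leaning on the exponential localization of $V_1,V_2$ and of the bound states. Once that estimate is in hand, the remaining arguments are bookkeeping with the intertwining identities and the orthogonal decomposition furnished by Proposition~\ref{ex-pro-2}.
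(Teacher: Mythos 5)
Your argument is correct, and for parts (ii) and (iii) it is essentially the paper's: the paper also deduces (ii) directly from the group law $U(r,t)=U(r,s)U(s,t)$ and the definition of $\mathscr{A}$, and proves (iii) by inserting $I=P_c(t)+P_{1b}(t)+P_{2b}(t)$ and using the intertwining $U(s,t)\Omega^{-}_{\kappa}(t)=\Omega^{-}_{\kappa}(s)e^{-iH_{\kappa}(s-t)}$ to kill the bound-state channels — the same bookkeeping you describe. Where you genuinely diverge is part (i). The paper's route is shorter: it quotes from the proof of \cite[Theorem 4.1]{RSS1} the (not a priori orthogonal) sum decomposition $L^{2}=\mathscr{A}(s)+{\rm Ran}\,\Omega^{-}_{1}(s)+{\rm Ran}\,\Omega^{-}_{2}(s)$, then establishes $\mathscr{A}(s)\perp{\rm Ran}\,\Omega^{-}_{\kappa}(s)$ ($\kappa=1,2$) by the one-line duality computation $\langle\psi_{0},\Omega^{-}_{1}(s)g\rangle=\lim_{t}\langle P_{b}(H_{1})U(t,s)\psi_{0},e^{-i(t-s)H_{1}}g\rangle=0$, and identifies $\mathscr{A}(s)$ with ${\rm Ran}\,\Omega^{-}_{0}(s)$ as the common orthogonal complement via Proposition \ref{ex-pro-2}(iii). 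You instead prove the operator-norm convergence $\Omega^{-}_{1}(t)\to P_{b}(H_{1})$, $\Omega^{-}_{2}(t)\to P_{b}(H_{2},t)$ by a Cook/Duhamel estimate and compute $\|P_{b}(H_{\kappa},t)U(t,s)f\|\to\|f_{\kappa}\|$ on the three components of the completeness decomposition. This is heavier — the norm collapse is exactly the content of the paper's Lemma \ref{thm0} and Remark \ref{rem-2}(ii), which the paper defers until after this proposition — but it buys you something: you do not need to import the sum decomposition from \cite{RSS1}, and you obtain both inclusions $\mathscr{A}(s)\subseteq{\rm Ran}\,\Omega^{-}_{0}(s)$ and ${\rm Ran}\,\Omega^{-}_{0}(s)\subseteq\mathscr{A}(s)$ from a single limit identity. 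One small step you gloss over but should record: passing from $\|\Omega^{-}_{1}(t)-P_{b}(H_{1})\|\to 0$ to convergence of the \emph{orthogonal} projection onto ${\rm Ran}\,\Omega^{-}_{1}(t)$ (needed to kill the cross terms $P_{b}(H_{1})U(t,s)f_{0}$ and $P_{b}(H_{1})U(t,s)f_{2}$) uses that the $\tilde u_{j}(t)=\Omega^{-}_{1}(t)u_{j}$ converge to the orthonormal system $u_{j}$, so their Gram matrix tends to the identity; this is routine for a finite-rank family but is the place where finiteness of the point spectrum enters.
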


\begin{proof}
We first prove (i). It follows from the proof of \cite[Theorem 4.1]{RSS1} that
$$L^{2}=\mathscr{A}(0)+{\rm Ran}\Omega^{-}_{1}(0)+{\rm Ran}\Omega^{-}_{2}(0).$$
Actually, the above identity holds for all $s\in \mathbb{R}$ by the same procedure.
Moreover, let $\psi_{0}\in \mathscr{A}(s)$, then for any $g\in L^{2}$
\begin{eqnarray}
\langle \psi_{0}, \Omega^{-}_{1}(s)g \rangle
=\lim_{t\rightarrow\infty}\langle P_{b}(H_{1})U(s,t)\psi_{0}, e^{-i(t-s)H_{1}}g \rangle=0,\nonumber
\end{eqnarray}
which means $\psi_{0}\perp {\rm Ran}\Omega^{-}_{1}(s)$.
Similarly, $\psi_{0}\perp {\rm Ran}\Omega^{-}_{2}(s)$, which means the orthogonal sum decomposition
$$L^{2}=\mathscr{A}(s)\oplus{\rm Ran}\Omega^{-}_{1}(s)\oplus{\rm Ran}\Omega^{-}_{2}(s).$$
Thus, by (iii) of Proposition \ref{ex-pro-2}, we have  $\mathscr{A}(s)={\rm Ran}\Omega^{-}_{0}(s)$.

Now turn to (ii). Notice that $P_{b}(H_{1})U(r,t)f=P_{b}(H_{1})U(r,s)U(s,t)f$, thus by definition of $\mathscr{A}(t)$ and $\mathscr{A}(s)$, it is easy to check  $U(s,t)\mathscr{A}(t)=\mathscr{A}(s)$.

Finally, we prove (iii).
Notice that $U(s,t)\Omega^{-}_{\kappa}(t)=\Omega^{-}_{\kappa}(s)e^{-iH_{\kappa}(s-t)}$, which means that $U(s,t)$ propagates ${\rm Ran}\Omega^{-}_{\kappa}(t)$ to ${\rm Ran}\Omega^{-}_{\kappa}(s)$ ($\kappa=1,2$). Then by (ii) of proposition \ref{ex-pro-2} that
\begin{eqnarray}\label{ex-eq-3}
P_{c}(s)U(s,t)P_{\kappa b}(t)=0,\ \kappa=1,2.\nonumber
\end{eqnarray}
Then by (\ref{commute}) and (ii) of Proposition \ref{ex-pro-2},
$$P_{c}(s)U(s,t)=P_{c}(s)U(s,t))(P_{c}(t)+P_{1b}(t)+P_{2b}(t))=P_{c}(s)U(s,t)P_{c}(t)=U(s,t)P_{c}(t).$$

\end{proof}

Notice that $\Omega^{-}_{1}(s)=\Omega^{-}_{1}(s)P_{b}(H_{1})$ by definition and $\Omega^{-}_{1}(s)$ is isometry,
then the set
$$\big\{\tilde{u}_{j}(s)=\Omega^{-}_{1}(s)u_{j}\big\}^{m}_{j=1}$$
is the basis of linear space $Ran\Omega^{-}_{1}(s)$, where $u_{j}$ are bound states of $H_{1}$. Moreover,
 we have the following estimate for this group of bases $\tilde{u}_{j}$.
\begin{lem}\label{thm0}
Let $\tilde{u}_{j}(s)$ $(1\leq j\leq m)$ be defined as above. Then for $\sigma\geq0$ and multi-index $\gamma$ with $|\gamma|\geq0$,
$$\big\|\langle x\rangle^{\sigma}\partial^{\gamma}_{x}\tilde{u}_{j}(s)\big\|_{L^{2}},\ \ \ \ 1\leq j\leq m$$
 is uniformly bounded in $s$.
\end{lem}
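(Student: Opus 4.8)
The plan is to convert the abstract limit defining $\tilde{u}_{j}(s)=\Omega^{-}_{1}(s)u_{j}$ into a convergent integral by Cook's method, and then read all the required norms off the integrand. Since $u_{j}$ is an eigenfunction, $e^{-iH_{1}(t-s)}u_{j}=e^{-i\lambda_{j}(t-s)}u_{j}$; writing $\phi(t)=U(s,t)e^{-i\lambda_{j}(t-s)}u_{j}$ we have $\phi(s)=u_{j}$ and $\phi(t)\to\tilde{u}_{j}(s)$ as $t\to+\infty$ by the definition of $\Omega^{-}_{1}(s)$. Using $\partial_{t}U(s,t)=iU(s,t)H(t)$ together with $H(t)u_{j}-\lambda_{j}u_{j}=V_{2}(\cdot-\vec{e}_{1}t)u_{j}$, one finds $\tfrac{d}{dt}\phi(t)=iU(s,t)e^{-i\lambda_{j}(t-s)}g_{t}$ with $g_{t}:=V_{2}(\cdot-\vec{e}_{1}t)u_{j}$, hence
\[
\tilde{u}_{j}(s)=u_{j}+i\int_{s}^{\infty}e^{-i\lambda_{j}(t-s)}\,U(s,t)\,g_{t}\,dt .
\]

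The second ingredient is the exponential smallness and localization of the source. By elliptic regularity and Agmon decay for the negative eigenvalue $\lambda_{j}$ of $H_{1}$, the eigenfunction $u_{j}$ is smooth and exponentially localized at the origin, while $V_{2}$ and all its derivatives are exponentially localized about $\vec{e}_{1}t$. Expanding $\partial^{\gamma}g_{t}$ by the Leibniz rule and using $|x-\vec{e}_{1}t|+|x|\ge|t|$ gives, for every $\sigma\ge0$ and every multi-index $\gamma$,
\[
\big\|\langle x\rangle^{\sigma}\partial^{\gamma}_{x}g_{t}\big\|_{L^{2}}\lesssim e^{-\delta|t|},\qquad \delta>0\ \text{ independent of }\ t .
\]
From this the plain $L^{2}$ bound is immediate, since $U(s,t)$ is unitary:
\[
\|\tilde{u}_{j}(s)\|_{L^{2}}\le\|u_{j}\|_{L^{2}}+\int_{s}^{\infty}\|g_{t}\|_{L^{2}}\,dt\le\|u_{j}\|_{L^{2}}+\int_{\mathbb{R}}e^{-\delta|t|}\,dt,
\]
uniformly in $s$.

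The main obstacle is everything beyond the plain $L^{2}$ bound, namely the presence of the weight $\langle x\rangle^{\sigma}$ or of derivatives $\partial^{\gamma}$, and in particular its uniformity as $s\to-\infty$. Here a termwise estimate fails: neither weighted $L^{2}$ norms nor $H^{k}$ norms are preserved by $U(s,t)$ — each grows polynomially in the elapsed time $|s-t|$ — and since the two potentials collide at $t\approx0$ (so that $g_{0}$ is of unit size) while $|s-t|\approx|s|$ there, the naive bound on $\|\langle x\rangle^{\sigma}\partial^{\gamma}U(s,t)g_{t}\|$ blows up as $s\to-\infty$. The growth must therefore be cancelled by the time integration, and the mechanism is the resolvent-type smoothing carried by the oscillatory factor $e^{-i\lambda_{j}(t-s)}U(s,t)$: morally the integral behaves like $(H-\lambda_{j})^{-1}$ applied to $g$, and since $\lambda_{j}<0$ sits strictly below the essential spectrum $[0,\infty)$ (a gap of fixed size $|\lambda_{j}|$), a Combes--Thomas/Kato--Jensen argument gains two derivatives and restores exponential localization. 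Concretely I would compare $U(s,t)$ with the frozen evolution $e^{-iH_{1}(s-t)}$ by Duhamel and split along $H_{1}$: the component resonant with $u_{j}$ stays proportional to the localized $u_{j}$ with coefficient controlled by $\int e^{-\delta|t|}\,dt$, while on the continuous-spectrum part the phase is nonstationary and repeated integration by parts in $t$ produces the weighted resolvent bounds, the error terms being harmless because $V_{2}$ is exponentially separated from the support of $u_{j}$ for large $|t|$. Carrying this out for the genuinely time-dependent propagator $U(s,t)$, rather than for $H_{1}$, together with tracking the shifted eigenvalue of $H(t)$ through the collision, is where the real work lies; once finiteness is secured, continuity of $s\mapsto\tilde{u}_{j}(s)$ in these norms and the already-noted limit $\tilde{u}_{j}(s)\to u_{j}$ as $s\to+\infty$ promote it to a bound uniform over all $s\in\mathbb{R}$.
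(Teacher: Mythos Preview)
Your Cook-formula setup and the exponential bound $\|\langle x\rangle^{\sigma}\partial^{\gamma}_{x}g_{t}\|_{L^{2}}\lesssim e^{-\delta|t|}$ are exactly what the paper uses. Where you diverge from the paper is in asserting that ``a termwise estimate fails'' for the weighted and derivative norms. The paper's argument \emph{is} a termwise estimate: it invokes the crude growth bound from \cite[p.~148]{RSS1},
\[
\big\|\langle x\rangle^{\sigma}\partial^{\gamma}_{x}U^{\pm1}(t)g\big\|_{L^{2}}\lesssim \langle t\rangle^{3\sigma+|\gamma|}\sum_{|\beta|\le\sigma+|\gamma|}\big\|\langle x\rangle^{\sigma}\partial^{\beta}_{x}g\big\|_{L^{2}},
\]
factors $U(s,r)=U(s)U^{-1}(r)$, and applies this twice. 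The loss is $\langle s\rangle^{3\sigma+|\gamma|}\langle r\rangle^{4\sigma+|\gamma|}$, but since the exponential localization of $V_{2}$ and $u_{j}$ gives $\|\langle x\rangle^{\sigma}\partial^{\beta}_{x}K_{j}(r,s,\cdot)\|_{L^{2}}\lesssim\langle r\rangle^{-N}$ with $N$ arbitrary, one just takes $N>7\sigma+2|\gamma|+2$ and integrates. There is no resolvent smoothing, no Combes--Thomas argument, and no comparison with a frozen evolution.

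Your worry about $s\to-\infty$ is not unfounded for the argument as written: when $r\in[s,\infty)$ passes through $r\approx0$ the source has unit size while the prefactor $\langle s\rangle^{3\sigma+|\gamma|}$ is large, and the paper's bound indeed fails to be uniform there. However, the lemma is only ever used (in Proposition~\ref{lem-1} and the proofs of Theorems~\ref{thm1} and~\ref{thm2}) for times $s=t\ge0$, the initial time being $0$ and the evolution running forward. In that regime $r\ge s\ge0$, so $\langle r\rangle^{-N}$ absorbs the $\langle s\rangle$-power as well and the termwise bound is uniform. The elaborate mechanism you sketch is therefore unnecessary for what the lemma is actually used for, is not what the paper does, and in your write-up remains only a heuristic plan rather than a proof.
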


\begin{proof}
 First of all, for each $1\leq j\leq m$, it is easy to see that $\|\tilde{u}_{j}(s)\|_{L^{2}}$ is uniformly bounded in $s$.
Notice that it follows from the Duhamel formula that
\begin{eqnarray}\label{eq-21}
U(s,t)e^{-iH_{1}(t-s)}u_{j}&=&u_{j}+i\int^{t}_{s}U(s,r)V_{2}(\cdot-\vec{e}_{1}r)e^{-iH_{1}(r-s)}u_{j}dr\nonumber\\
&=&u_{j}+i\int^{t}_{s}U(s,r)V_{2}(\cdot-\vec{e}_{1}r)e^{-i\lambda_{j}(r-s)}u_{j}dr
\end{eqnarray}
Since $V_{1}$ and $V_{2}$ are exponentially localized smooth functions, the function $u_{j}$ is also smooth and exponentially localized in $L^{2}$, thus $\langle x\rangle^{\sigma}\partial^{\gamma}_{x}u_{j}\in L^{2}$ for all $\sigma>0$ and multi-index $\gamma$.  On the other hand, the function
$$K_{j}(r,s,x):=V_{2}(x-\vec{e}_{1}r)e^{-i\lambda_{j}(r-s)}u_{j}(x)$$
has the property that for any $\sigma>0$, multi-index $\beta$ and $N>0$
$$\big\|\langle x\rangle^{\sigma}\partial^{\beta}_{x}K_{j}(r,s,\cdot)\big\|_{L_{x}^{2}}\leq c(\sigma,|\beta|,j,N)\langle r\rangle^{-N}.$$
Now notice that for any integer $\sigma>0$ and multi-index $\gamma$, it has been proved in \cite[p. 148]{RSS1} that
\begin{eqnarray}\label{eq-22}
\big\|\langle x\rangle^{\sigma} \partial^{\gamma}_{x}U^{\pm 1}(t)g\big\|_{L^{2}}\lesssim \langle t\rangle^{3\sigma+|\gamma|}\sum_{|\beta|\leq \sigma+|\gamma|}\big\|\langle x\rangle^{\sigma} \partial^{\beta}_{x}g\big\|_{L^{2}},\nonumber
\end{eqnarray}
which implies
\begin{eqnarray}\label{eq-23}
\big\|\langle x\rangle^{\sigma}\partial^{\gamma}_{x}U(s,r)K_{j}(r,s,\cdot)\big\|_{L_{x}^{2}}
&=&\big\|\langle x\rangle^{\sigma}\partial^{\gamma}_{x}U(s)U^{-1}(r)K_{j}(r,s,\cdot)\big\|_{L_{x}^{2}}\nonumber\\
&\lesssim& \langle s\rangle^{3\sigma+|\gamma|}\sum_{|\beta|\leq \sigma+|\gamma|}\big\|\langle x\rangle^{\sigma}\partial_{x}^{\beta}U^{-1}(r)K_{j}(r,s,\cdot)\big\|_{L_{x}^{2}}\nonumber\\
&\lesssim& \langle s\rangle^{3\sigma+|\gamma|}\langle r\rangle^{4\sigma+|\gamma|}\sum_{|\beta'|\leq 2\sigma+|\gamma|}\big\|\langle x\rangle^{\sigma}\partial_{x}^{\beta'}K_{j}(r,s,\cdot)\big\|_{L_{x}^{2}}\nonumber\\
&\lesssim&  \langle s\rangle^{3\sigma+|\gamma|}\langle r\rangle^{4\sigma+|\gamma|-N}.
\end{eqnarray}
Then by (\ref{eq-21})-(\ref{eq-23}) and choosing $N>7\sigma+2|\gamma|+2$, we could obtain that for all $\gamma$ and positive integer $\sigma$,
$$\big\|\langle x\rangle^{\sigma}\partial^{\gamma}_{x}\tilde{u}_{j}(s)\big\|_{L^{2}}$$
is uniformly bounded in $s$. For arbitrary $\sigma>0$,  it can be proved by using interpolation. Hence we finish the proof.
\end{proof}
\vskip0.3cm
\begin{rem}\label{rem-2}{\rm (i) Since
$$U(s,t)G_{-\vec{e_{1}}}(t)e^{-iH_{2}(t-s)}P_{b}(H_{2})G_{\vec{e_{1}}}(s)=G_{-\vec{e_{1}}}(s)\widetilde{U}(s,t)e^{-iH_{2}(t-s)}P_{b}(H_{2})G_{\vec{e_{1}}}(s),$$
where $\widetilde{U}(t)$ is the propagator  of $i\partial_{t}\psi=\widetilde{H}\psi$ with $\widetilde{H}=-\frac{1}{2}\Delta+V_{1}(x+\vec{e}_{1}t)+V_{2}$. On the other hand, one always has the identity $$\Omega^{-}_{2}(s)=\Omega^{-}_{2}(s)P_{b}(H_{2},s).$$
By the same argument, we could show that Lemma \ref{thm0} still holds for
$\widetilde{\Omega}^{-}_{2}(s)w_{j}$
with $$\widetilde{\Omega}^{-}_{2}(s)=s-\lim_{t\rightarrow+\infty}\widetilde{U}(s,t)e^{-iH_{2}(t-s)}P_{b}(H_{2}),$$
where $w_{j}$ $(1\leq j\leq \ell)$ is the bound states of $H_{2}$ defined as before. Moreover, it is easy to see that
\begin{eqnarray}\label{eq-27}
\big\{\tilde{w}_{j}(s)=G_{-\vec{e_{1}}}(s)\widetilde{\Omega}^{-}_{2}(s)w_{j}G_{\vec{e_{1}}}(s)\big\}^{\ell}_{j=1},
\end{eqnarray}
is the basis of $Ran\Omega^{-}_{2}(s)$.

(ii) In fact, we can see from (\ref{eq-23}) that $\tilde{u}_{j}(s)$ is close to $u_{j}$ in $L^{2}$ as $s$ gets large. Moreover, for arbitrary $N>0$,
$$\|\tilde{u}_{j}(s)-u_{j}\|_{L^{2}}\lesssim \langle s\rangle^{-N}.$$

(iii) By choosing suitable $\sigma$ and $\gamma$ in Lemma \ref{thm0} and the Sobolev embedding, we could obtain that $\tilde{u}_{j}(s)\in L^{p}$ for all $1\leq p\le\infty$. Then it follows that the wave operator  $\Omega^{-}_{1}(s)$ is uniformly  bounded on $L^{p}$ for all $1\le p\le \infty$. Similarly, the same conclusion holds for $\Omega^{-}_{2}(s)$.
}
\end{rem}

Since $\mathscr{A}(s)={\rm Ran}\Omega^{-}_{0}(s)$, the projection $P_{c}(s)$ onto the space $\mathscr{A}(s)$ is actually a projection on the scattering states of the charge transfer transfer model.  For the spaces ${\rm Ran}\Omega^{-}_{1}(s)$ and ${\rm Ran}\Omega^{-}_{2}(s)$, we naturally considered them as its ``bounded states" and  use $P_{1b}(s)$ and $P_{2b}(s)$ to denote the projections onto
${\rm Ran}\Omega^{-}_{1}(s)$ and ${\rm Ran}\Omega^{-}_{2}(s)$, respectively.  Thus the asymptotic completeness says that for all $s\in \mathbb{R}$
\begin{eqnarray}\label{eq-26}
P_{c}(s)+P_{1b}(s)+P_{2b}(s)=I
\end{eqnarray}
on $L^2$.  Now we will prove the following uniformly weighted estimates for projections $P_{jb}(s)\ (j=1,2)$, which are very important to control $P_c(s)$ and establish endpoint Strichartz estimates of the charge transfer model.

\begin{pro}\label{lem-1}Let $\sigma_{1}$ and $\sigma_{2}$ be any nonnegative numbers. Then we have
\begin{eqnarray}
\sup_{s}\big\|\langle x-D(s)\rangle^{-\sigma_{1}}P_{1b}(s)\langle x\rangle^{\sigma_{2}}f\big\|_{L_{x}^{2}}\le C\|f||_{L_{x}^{2}},\nonumber
\end{eqnarray}
and
\begin{eqnarray}
\sup_{s}\big\|\langle x-D(s)\rangle^{-\sigma_{1}}P_{2b}(s)\langle x-\vec{e}_{1}s\rangle^{\sigma_{2}}f\big\|_{L_{x}^{2}}\le C\|f||_{L_{x}^{2}}.\nonumber
\end{eqnarray}
where  $D(t)$ denotes either $0$ or $\vec{e}_{1}t$.
\end{pro}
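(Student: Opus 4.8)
The plan is to exploit the explicit finite-rank structure of the projections together with the uniform weighted smoothness of their basis vectors, which is exactly what Lemma \ref{thm0} and Remark \ref{rem-2} already provide. Since the bound states $u_1,\dots,u_m$ are orthonormal and $\Omega^-_1(s)$ is an isometry, the family $\{\tilde u_j(s)=\Omega^-_1(s)u_j\}_{j=1}^m$ is an orthonormal basis of ${\rm Ran}\,\Omega^-_1(s)$, so that
\begin{eqnarray}
P_{1b}(s)g=\sum_{j=1}^m\langle g,\tilde u_j(s)\rangle\,\tilde u_j(s),\nonumber
\end{eqnarray}
and likewise $P_{2b}(s)g=\sum_{j=1}^\ell\langle g,\tilde w_j(s)\rangle\,\tilde w_j(s)$, where $\{\tilde w_j(s)\}$ is the orthonormal basis of ${\rm Ran}\,\Omega^-_2(s)$ from \eqref{eq-27} (orthonormal because $\widetilde\Omega^-_2(s)$ is an isometry, $G_{-\vec e_1}(s)$ is unitary, and the $w_j$ are orthonormal). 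With these formulas the two estimates reduce to controlling, uniformly in $s$, the weighted $L^2$-norms of the basis vectors, which is precisely the content already in hand.

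For the first inequality I would observe that $\langle x\rangle^{\sigma_2}$ is a real (self-adjoint) multiplier, so that $\langle \langle x\rangle^{\sigma_2}f,\tilde u_j(s)\rangle=\langle f,\langle x\rangle^{\sigma_2}\tilde u_j(s)\rangle$; this pairing is well defined for every $f\in L^2$ because $\langle x\rangle^{\sigma_2}\tilde u_j(s)\in L^2$ by Lemma \ref{thm0}, which also gives the bounded extension of the composed operator. Applying Cauchy--Schwarz to each term and using that the multiplier $\langle x-D(s)\rangle^{-\sigma_1}$ has $L^2$-operator norm at most $1$ for $\sigma_1\ge 0$ (pointwise $\langle\,\cdot\,\rangle^{-\sigma_1}\le 1$), I obtain
\begin{eqnarray}
\big\|\langle x-D(s)\rangle^{-\sigma_1}P_{1b}(s)\langle x\rangle^{\sigma_2}f\big\|_{L^2}\le \sum_{j=1}^m \|f\|_{L^2}\,\big\|\langle x\rangle^{\sigma_2}\tilde u_j(s)\big\|_{L^2}.\nonumber
\end{eqnarray}
Since $\sup_s\big\|\langle x\rangle^{\sigma_2}\tilde u_j(s)\big\|_{L^2}<\infty$ by Lemma \ref{thm0} (taking $\gamma=0$, $\sigma=\sigma_2$), the right-hand side is $\le C\|f\|_{L^2}$ with $C$ independent of $s$, as desired.

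The second inequality is structurally identical once the translated weight is matched to the localization of $\tilde w_j(s)$. By Remark \ref{rem-2}(i) the functions $\widetilde\Omega^-_2(s)w_j$ satisfy the conclusion of Lemma \ref{thm0}, i.e. they are uniformly localized near the origin; and $\tilde w_j(s)$ is obtained from them via $G_{-\vec e_1}(s)$, which acts as a unimodular phase times the translation $x\mapsto x-\vec e_1 s$. Hence the change of variables $y=x-\vec e_1 s$ yields $\big\|\langle x-\vec e_1 s\rangle^{\sigma_2}\tilde w_j(s)\big\|_{L^2}=\big\|\langle y\rangle^{\sigma_2}\widetilde\Omega^-_2(s)w_j\big\|_{L^2}$, which is again uniformly bounded in $s$ by Remark \ref{rem-2}(i). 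Running the same Cauchy--Schwarz argument, and once more using that $\langle x-D(s)\rangle^{-\sigma_1}$ is a contraction for $\sigma_1\ge 0$, gives the claimed uniform bound for $P_{2b}(s)$.

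I do not expect a genuine obstacle in this proposition: the heavy lifting, namely the uniform weighted bounds on the scattering-deformed bound states, was already carried out in Lemma \ref{thm0} and Remark \ref{rem-2}, and what remains is to package that information through the orthonormal-basis representation of the projections. The only points requiring care are (a) confirming orthonormality of the two bases, which follows from the isometry of $\Omega^-_1(s)$ and $\widetilde\Omega^-_2(s)$ together with orthonormality of $u_j,w_j$, and (b) aligning the translated weight $\langle x-\vec e_1 s\rangle^{\sigma_2}$ with the Galilei shift built into $\tilde w_j(s)$ in the $P_{2b}$ estimate. The left weight $\langle x-D(s)\rangle^{-\sigma_1}$ plays no essential role: because it is a contraction on $L^2$, both admissible choices $D(s)=0$ and $D(s)=\vec e_1 s$ are handled simultaneously, which explains why the statement may be left uncommitted as to which is used.
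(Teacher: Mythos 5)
Your proposal is correct and follows essentially the same route as the paper: it uses the finite-rank representation $P_{\kappa b}(s)g=\sum_j\langle g,\cdot\rangle\,(\cdot)$ in the (orthonormalized) basis $\{\tilde u_j(s)\}$, resp.\ $\{\tilde w_j(s)\}$, moves the weight onto the basis vectors, applies Cauchy--Schwarz together with the contraction property of $\langle x-D(s)\rangle^{-\sigma_1}$, and invokes Lemma \ref{thm0} and Remark \ref{rem-2}(i) for the uniform weighted bounds. The only cosmetic difference is that the paper hedges with a Gram--Schmidt remark where you justify orthonormality directly via the isometry of the wave operators, which is equally valid.
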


\begin{proof}
Since $P_{1b}(s)$ is the projection onto $Ran\Omega^{-}_{1}(s)$ and $\big\{\tilde{u}_{j}(s)\big\}^{m}_{j=1}$ is the basis of $Ran\Omega^{-}_{1}(s)$, let us just assume it is orthogonal basis, otherwise one could use Schmidt's orthogonalization which will not affect the estimates here. we have
\begin{eqnarray}\label{eq-24}
P_{1b}(s)f=\sum^{m}_{i=1}\langle f,\tilde{u}_{j}(s)\rangle \tilde{u}_{j}(s),
\end{eqnarray}
where $\tilde{u}_{j}(s)$ $(1\leq j\leq m)$ is defined as in Lemma \ref{thm0}.
Then for $f\in L^{2}$ and any $\sigma_{1}, \sigma_{2}>0$, it follows from Lemma \ref{thm0} that
\begin{eqnarray}
&&\big\|\langle x-D(s)\rangle^{-\sigma_{1}}P_{1b}(s)\langle x\rangle^{\sigma_{2}}f\big\|_{L^{2}}\nonumber\\
&\leq& \|\langle x-D(s)\rangle^{-\sigma_{1}}\|_{L^{\infty}}
\sum^{m}_{j=1}\big|\langle \langle x\rangle^{\sigma_{2}}f,\tilde{u}_{j}(s)\rangle\big| \|\tilde{u}_{j}(s)\|_{L^{2}}\nonumber\\
&\leq &\|\langle x\rangle^{-\sigma_{1}}\|_{L^{\infty}}\|f\|_{L^{2}}\sum^{m}_{j=1}\|\langle x\rangle^{\sigma_{2}}\tilde{u}_{j}(s)\|_{L^{2}}\|\tilde{u}_{j}(s)\|_{L^{2}}\nonumber\\
&\leq& C\|f\|_{L^{2}},\nonumber
\end{eqnarray}
where the constant $C>0$ is independent of $s$.

Notice that  the projection $P_{2b}(s)$ is of form
\begin{eqnarray}\label{eq-25}
P_{2b}(s)f=\sum^{m}_{i=1}\langle f,\tilde{w}_{j}(s)\rangle \tilde{w}_{j}(s),
\end{eqnarray}
where $\tilde{w}_{j}(s)$ ($1\leq j\leq \ell$) is defined by (\ref{eq-27}) in (i) of Remark \ref{rem-2}. Thus for
 $f\in L^{2}$ and any $\sigma_{1}, \sigma_{2}>0$,
\begin{eqnarray}
&&\big\|\langle x-D(s)\rangle^{-\sigma_{1}}P_{2b}(s)\langle x-\vec{e}_{1}s\rangle^{\sigma_{2}}f\big\|_{L^{2}}\nonumber\\
&\leq& \|\langle x-D(s)\rangle^{-\sigma_{1}}\|_{L^{\infty}}
\sum^{\ell}_{j=1}\big|\langle \langle x-\vec{e}_{1}s\rangle^{\sigma_{2}}f,\tilde{w}_{j}(s)\rangle\big| \|\tilde{w}_{j}(s)\|_{L^{2}}\nonumber\\
&\leq &\|\langle x\rangle^{-\sigma_{1}}\|_{L^{\infty}}\|f\|_{L^{2}}\sum^{\ell}_{j=1}\big\|G_{-\vec{e_{1}}}(s)\langle x\rangle^{\sigma_{2}}\widetilde{\Omega}^{-}_{2}(s)w_{j}G_{\vec{e_{1}}}(s)\big\|_{L^{2}}\|\tilde{w}_{j}(s)\|_{L^{2}}\nonumber\\
&\leq& C\|f\|_{L^{2}},\nonumber
\end{eqnarray}
where (i) of Remark \ref{rem-2} is applied and the constant $C>0$ is independent of $s$.
\end{proof}

\begin{rem}{\rm
By (\ref{eq-26}), (\ref{eq-24}) and (iii) of Remark \ref{rem-2}, it is easy to prove that $P_{c}(s)$ and $P_{\kappa b}(s)$ ($\kappa=1,2$) are uniformly bounded on $L^p$ for all $1\le p\le \infty$.
}
\end{rem}

\subsection{The proofs of Strichartz estimates}
In this subsection, we will prove Theorems \ref{thm1} and \ref{thm2}, i.e. full Strichartz estimates for the scalar charge transfer problem (\ref{main-equation}).  Let us first recall the optimal Strichartz estimates for the free Schr\"odinger equation. It is well-known that (see e.g Keel-Tao \cite{KT})
\begin{eqnarray}\label{s-f}
\big\|e^{-it\Delta/2}f\big\|_{L_{t}^{p}L_{x}^{q}}\lesssim\|f\|_{L^{2}},
\end{eqnarray}
\begin{eqnarray}\label{s-f'}
\Big\|\int_{0}^t e^{-i(t-s)\Delta/2}f(x,s)ds\Big\|_{L_{t}^{p}L_{x}^{q}}\lesssim\|f\|_{L_{t}^{\tilde{p}'}L_{x}^{\tilde{q}'}},
\end{eqnarray}
where $(p,q)$ and $(\tilde{p},\tilde{q})$ are admissible pairs satisfying the (\ref{pair}) and $\frac{1}{\tilde{p}'}+\frac{1}{\tilde{p}}=1$.

\vskip0.3cm

{\bf The proof of Theorem \ref{thm1}:}
we only show the result for $n=3$ without the loss of generality. The proof can be concluded by the following three steps.

\emph{Step 1, Kato-Jensen estimates.} We will show that for $0\leq t_{0}<t$ and $\sigma>\frac{3}{2}$,
\begin{eqnarray}\label{eq-9}
\big\|\langle x-D(t)\rangle^{-\sigma}U(t,t_{0})P_{c}(t_{0})\langle x-D(t_{0})\rangle^{-\sigma}\big\|_{L^{2}\rightarrow L^{2}}\leq C\langle t-t_{0}\rangle^{-\frac{3}{2}}
\end{eqnarray}
where the constant $C$ is independent of $t$ and $t_{0}$.
To this end,  recall that from Proposition \ref{ex-pro-1} the decay estimates
\begin{eqnarray}\label{eq-8}
\big\|U(t,t_{0})f\big\|_{L^{\infty}}\lesssim |t-t_{0}|^{-\frac{3}{2}} \|f\|_{L^{1}}\nonumber
\end{eqnarray}
hold for $0\leq t_{0}<t$ and $f\in L^{1}\cap \mathscr{A}(t_{0})$, where we use the assumption that  $V_{1}$ and $V_{2}$ are exponentially localized smooth functions (see Definition \ref{defi-1}).
For $|t-t_{0}|\leq 1$, (\ref{eq-9}) follows by $\|U(t,t_{0})\|_{L^{2}\rightarrow L^{2}}\leq 1$.
For $|t-t_{0}|>1$, we have
\begin{eqnarray}
&&\big\|\langle x-D(t)\rangle^{-\sigma}U(t,t_{0})P_{c}(t_{0})\langle x-D(t_{0})\rangle^{-\sigma}f\big\|_{L^{2}\rightarrow L^{2}}\nonumber\\
&\leq& \big\|\langle x-D(t)\rangle^{-\sigma}\big\|_{L^{2}}\|U(t,t_{0})P_{c}(t_{0})\|_{{L^{1}}\rightarrow L^{\infty}}\big\|\langle x-D(t_{0})\rangle^{-\sigma}f\big\|_{L^{1}}\nonumber\\
&\leq& C|t-t_{0}|^{-\frac{3}{2}}\|f\|_{L^2}\nonumber
\end{eqnarray}
where the constant $C$ is independent $t$ and $t_{0}$. On the other hand, by (\ref{commute-1}), we also have
\begin{eqnarray}\label{eq-9''}
\big\|\langle x-D(t)\rangle^{-\sigma}P_{c}(t)U(t,t_{0})\langle x-D(t_{0})\rangle^{-\sigma}\big\|_{L^{2}\rightarrow L^{2}}\leq C\langle t-t_{0}\rangle^{-\frac{3}{2}}
\end{eqnarray}

\emph{Step 2, local decay estimates.} We intend to prove that for $\psi_{0}\in \mathscr{A}(0)$ and $\sigma>\frac{3}{2}$
\begin{eqnarray}\label{eq-10}
\big\|\langle x-D(t)\rangle^{-\sigma}U(t)\psi_{0}\big\|_{L_{t}^{2}L_{x}^{2}}\leq C\|\psi_{0}\|_{L^2}.
\end{eqnarray}
In fact, since $U(t)\psi_{0}$ belongs to $\mathscr{A}(t)$, it is equivalent to prove
$$\big\|\langle x-D(t)\rangle^{-\sigma}P_{c}(t)U(t)\psi_{0}\big\|_{L_{t}^{2}L_{x}^{2}}\leq C\|\psi_{0}\|_{L^2}.$$
Consider the Cauchy problem
\begin{eqnarray}\label{eq-11-1}
&&i\Phi_{t}=-\frac{1}{2}\Delta\Phi=(-\frac{1}{2}\Delta+V_{1}+V_{2}(x-\vec{e}_{1}t))\Phi-(V_{1}+V_{2}(x-\vec{e}_{1}t))\Phi\nonumber\\
&&\Phi(0,\cdot)=\psi_{0}.\nonumber
\end{eqnarray}
By Duhamel's formula and $\Phi(t)=e^{it\frac{\Delta}{2}}\psi_{0}$, it follows that
\begin{eqnarray}\label{eq-11-2}
P_{c}(t)U(t)\psi_{0}=P_{c}(t)e^{it\frac{\Delta}{2}}\psi_{0}-i\int^{t}_{0}P_{c}(t)U(t,s)(V_{1}+V_{2}(\cdot-\vec{e}_{1}s))e^{is\frac{\Delta}{2}}\psi_{0}ds.
\end{eqnarray}
Notice that by H\"older inequality and  (\ref{s-f}) (i.e. the endpoint Strichartz estimates for $e^{it\Delta/2}$),
\begin{eqnarray}\label{eq-12-1}
\big\|\big(|V_{1}|^{\frac{1}{2}}+|V_{2}(\cdot-\vec{e}_{1}t)|^{\frac{1}{2}}\big)e^{it\frac{\Delta}{2}}\psi_{0}\big\|_{L_{t}^{2}L_{x}^{2}}
&\leq&\Big(\big\||V_{1}|^{\frac{1}{2}}\big\|_{L^{3}}+\big\||V_{2}|^{\frac{1}{2}}\big\|_{L^{3}}\Big)\ \big\|e^{it\frac{\Delta}{2}}\psi_{0}\big\|_{L_{t}^{2}L_{x}^{6}}\nonumber\\
&\leq& C\|\psi_{0}\|_{L^{2}},
\end{eqnarray}
By combining  the Kato-Jensen estimates (\ref{eq-9''}) and the Young inequality, (\ref{eq-12-1}) implies that
\begin{eqnarray}\label{eq-12}
&&\Big\|\int^{t}_{0}\langle x-D(t)\rangle^{-\sigma}P_{c}(t)U(t,s)(V_{1}+V_{2}(\cdot-\vec{e}_{1}s))e^{is\frac{\Delta}{2}}\psi_{0}ds\Big\|_{L_{t}^{2}L_{x}^{2}}\nonumber\\
&\leq& \Big\|\int^{t}_{0}\big\|\langle x-D(t)\rangle^{-\sigma}P_{c}(t)U(t,s)\langle x\rangle^{-\sigma}\langle x\rangle^{\sigma}|V_{1}|^{\frac{1}{2}}|V_{1}|^{\frac{1}{2}}e^{is\frac{\Delta}{2}}\psi_{0}\big\|_{L_{x}^{2}}ds\Big\|_{L_{t}^{2}}\nonumber\\
&&\ \ \ \ \ \  +\ \Big\|\int^{t}_{0}\big\|\langle x-D(t)\rangle^{-\sigma}P_{c}(t)U(t,s)\langle x-\vec{e}_{1}s\rangle^{-\sigma}\nonumber\\
&&\ \ \ \ \ \ \ \ \ \ \ \ \ \ \ \ \ \ \ \ \ \ \times\langle x-\vec{e}_{1}s\rangle^{\sigma}|V_{2}(\cdot-\vec{e}_{1}s)|^{\frac{1}{2}}|V_{2}(\cdot-\vec{e}_{1}s)|^{\frac{1}{2}}e^{is\frac{\Delta}{2}}\psi_{0}\big\|_{L_{x}^{2}}ds\Big\|_{L_{t}^{2}}\nonumber\\
&\leq&C\Big\|\int^{t}_{0}\langle t-s\rangle^{-\frac{3}{2}}\Big(\big\||V_{1}|^{\frac{1}{2}}e^{is\frac{\Delta}{2}}\psi_{0}\big\|_{L_{x}^{2}}
+\big\||V_{2}(\cdot-\vec{e}_{1}s)|^{\frac{1}{2}}e^{is\frac{\Delta}{2}}\psi_{0}\big\|_{L_{x}^{2}}\Big)ds\Big\|_{L_{t}^{2}}\nonumber\\
&\leq& C\|\psi_{0}\|_{L^2}.
\end{eqnarray}
On the other hand, note that (\ref{eq-12-1}) still holds if the potentials $|V_{1}|^{\frac{1}{2}}$ and $|V_{2}(\cdot-\vec{e}_{1}s)|^{\frac{1}{2}}$ replace by  $\langle x-D(t)\rangle^{-\sigma}$ for $\sigma>\frac{3}{2}$, and the identity $$P_{c}(t)+P_{1b}(t)+P_{2b}(t)=I,$$
then it follows from Proposition  \ref{lem-1} that
\begin{eqnarray}\label{eq-13}
&&\big\|\langle x-D(t)\rangle^{-\sigma}P_{c}(t)e^{it\frac{\Delta}{2}}\psi_{0}\big\|_{L_{t}^{2}L_{x}^{2}}\nonumber\\
&\leq& \big\|\langle x-D(t)\rangle^{-\sigma}e^{it\frac{\Delta}{2}}\psi_{0}\big\|_{L_{t}^{2}L_{x}^{2}}+\sum^{2}_{\kappa=1}\big\|\langle x-D(t)\rangle^{-\sigma}P_{\kappa b}(t)e^{it\frac{\Delta}{2}}\psi_{0}\big\|_{L_{t}^{2}L_{x}^{2}}\nonumber\\
&\leq&C\|\psi_{0}\|_{L^{2}}+\sup_{t}\big\|\langle x-D(t)\rangle^{-\sigma}P_{1b}(t)\langle x\rangle^{\sigma}\big\|_{L_{x}^{2}\rightarrow L_{x}^{2}}\big\|\langle x\rangle^{-\sigma}e^{it\frac{\Delta}{2}}\psi_{0}\big\|_{L_{t}^{2}L_{x}^{2}}\nonumber\\
&&\ \ \ \ \ \ \ +\sup_{t}\big\|\langle x-D(t)\rangle^{-\sigma}P_{2b}(t)\langle x-\vec{e}_{1}t\rangle^{\sigma}\big\|_{L_{x}^{2}\rightarrow L_{x}^{2}}\big\|\langle x-\vec{e}_{1}t\rangle^{-\sigma}e^{it\frac{\Delta}{2}}\psi_{0}\big\|_{L_{t}^{2}L_{x}^{2}}\nonumber\\
&\leq&C\|\psi_{0}\|_{L^{2}}.
\end{eqnarray}
Therefore, by (\ref{eq-11-2})-(\ref{eq-13}), we obtain
\begin{eqnarray}\label{eq-28}
\|\langle x-D(t)\rangle^{-\sigma}P_{c}(t)U(t)\psi_{0}\|_{L_{t}^{2}L_{x}^{2}}\leq C\|\psi_{0}\|_{L^2}.\nonumber
\end{eqnarray}

\emph{Step 3, The homogeneous Strichartz estimates.} Now let $\psi(t)=U(t)\psi_{0}$ be the solution of the equation (\ref{main-equation}). By Duhamel's formula, we have
\begin{eqnarray}
\psi(t)=U(t)\psi_{0}=e^{it\frac{\Delta}{2}}\psi_{0}-i\int^{t}_{0}e^{i(t-s)\frac{\Delta}{2}}(V_{1}+V_{2}(\cdot-\vec{e}_{1}s))U(s)\psi_{0}ds.
\end{eqnarray}
Let $(p,q)$ be admissible pair, it follows from (\ref{s-f}) and (\ref{eq-10}) that
\begin{eqnarray}
\|U(t)\psi_{0}\|_{L_{t}^{p}L_{x}^{q}}&\leq& \big\|e^{it\frac{\Delta}{2}}\psi_{0}\big\|_{L_{t}^{p}L_{x}^{q}}+\Big\|\int^{t}_{0}e^{i(t-s)\frac{\Delta}{2}}(V_{1}+V_{2}(\cdot-\vec{e}_{1}s))U(s)\psi_{0}ds\Big\|_{L_{t}^{p}L_{x}^{q}}\nonumber\\
&\leq& C\Big(\|\psi_{0}\|_{L^{2}}+\big\|(V_{1}+V_{2}(\cdot-\vec{e}_{1}t))U(t)\psi_{0}\big\|_{L_{t}^{2}L_{x}^{\frac{6}{5}}}\Big)\nonumber\\
&\leq& C\Big(\|\psi_{0}\|_{L^{2}}+\big\||V_{1}|^{\frac{1}{2}}\langle x\rangle^{\sigma}\langle x\rangle^{-\sigma}U(t)\psi_{0}\big\|_{L_{t}^{2}L_{x}^{2}}\nonumber\\
&&\ \ \ \ \ \ \ \ \ \ \ \ +\
\big\||V_{2}(\cdot-\vec{e}_{1}t)|^{\frac{1}{2}}\langle x-\vec{e}_{1}t\rangle^{\sigma}\langle x-\vec{e}_{1}t\rangle^{-\sigma}U(t)\psi_{0}\big\|_{L_{t}^{2}L_{x}^{2}}\Big)\nonumber\\
&\leq& C\|\psi_{0}\|_{L^{2}}.\nonumber
\end{eqnarray}
Hence we finish the whole proof.

\vskip0.4cm

Now consider the nonhomogeneous charge transfer model (\ref{main-equation-21} ), i.e.
$$
i\partial_{t}\psi=-\frac{1}{2}\Delta\psi+V_{1}\psi+V_{2}(x-\vec{e}_{1}t)\psi+F(t),\ \ \psi(0,\cdot)=\psi_{0}\in \mathscr{A}(0).
$$
We will prove Theorem \ref{thm2}.  For this, we first project the both sides of the equation onto the scattering states:
\begin{eqnarray}\label{eq-14}
iP_{c}(t)\partial_{t}\psi=P_{c}(t)H(t)\psi+P_{c}(t)F(t)
\end{eqnarray}
Notice that by differentiation on the both side of the (\ref{commute-1}) with respect to $s$ at $s=t$, we obtain
\begin{eqnarray}
i\dot{P_{c}}(t)\psi=H(t)P_{c}(t)\psi-P_{c}(t)H(t)\psi.\nonumber
\end{eqnarray}
Hence the
(\ref{eq-14})
is equivalent to
\begin{eqnarray}\label{eq-14'}
i\partial_{t}(P_{c}(t)\psi)=H(t)P_{c}(t)\psi+P_{c}(t)F(t).
\end{eqnarray}


{\bf The proof of Theorem \ref{thm2}:} Similarly, we only prove the estimates for $n=3$ and  divide the proof into serval steps. The first two steps have been proved in
Theorem \ref{thm1}. Denote by $U(t,s)$ the propagator of the linear equation $i\partial_{t}\psi=H(t)\psi$ and also write $U(t)=U(t,0)$.

\emph{Step 1, Kato-Jensen estimates.} For $0\leq t_{0}<t$ and $\sigma>\frac{3}{2}$,
\begin{eqnarray}\label{eq-9-2}
\big\|\langle x-D(t)\rangle^{-\sigma}U(t,t_{0})P_{c}(t_{0})\langle x-D(t_{0})\rangle^{-\sigma}\big\|_{L^{2}\rightarrow L^{2}}\leq C\langle t-t_{0}\rangle^{-\frac{3}{2}},
\end{eqnarray}
where the constant $C$ is independent of $t, t_{0}$.

\emph{Step 2, local decay estimates.} For $\psi_{0}\in \mathscr{A}(t)$ and $\sigma>\frac{3}{2}$
\begin{eqnarray}\label{eq-10-2}
\big\|\langle x-D(t)\rangle^{-\sigma}U(t)\psi_{0}\big\|_{L_{t}^{2}L_{x}^{2}}\leq C\|\psi_{0}\|_{L^2}.
\end{eqnarray}

\emph{Step 3, local decay estimates for source part}.  We shall show that for  $\sigma>\frac{3}{2}$ and admissible pair $(\tilde{p},\tilde{q})$
\begin{eqnarray}\label{eq-15}
\Big\|\langle x-D(t)\rangle^{-\sigma}\int_{0}^{t}U(t,s)P_{c}(s)F(s)ds\Big\|_{L^{2}_{t}L^{2}_{x}}\lesssim \|\psi_{0}\|_{L^{2}}+\|F\|_{L^{\tilde{p}'}_{t}L^{\tilde{q}'}_{x}}.
\end{eqnarray}
Consider the Cauchy problem
\begin{eqnarray}\label{eq-16}
&&i\Phi_{t}=-\frac{1}{2}\Delta\Phi+F(t)=(-\frac{1}{2}\Delta+V_{1}+V_{2}(x-\vec{e}_{1}t))\Phi-(V_{1}+V_{2}(x-\vec{e}_{1}t))\Phi+F(t)\nonumber\\
&&\Phi(0,\cdot)=\psi_{0}\in \mathscr{A}(0).\nonumber
\end{eqnarray}
It follows from Duhamel's formula that
\begin{eqnarray}\label{eq-11-3}
P_{c}(t)\Phi(t)=P_{c}(t)U(t)\psi_{0}&+&i\int^{t}_{0}P_{c}(t)U(t,s)(V_{1}+V_{2}(\cdot-\vec{e}_{1}s))\Phi(t)ds\nonumber\\
&&\ \ \ \ \ \ \ \ \ \ \ \ \ \ -\ i\int^{t}_{0}P_{c}(t)U(t,s)F(s)ds.
\end{eqnarray}
 For the left hand side of (\ref{eq-11-3}), since $\Phi(t)$ is also a solution of $i\Phi_{t}=-\frac{1}{2}\Delta\Phi+F(t)$, by (\ref{s-f'}), (\ref{eq-12-1}) and  Duhamel's formula again, we have that for any  $\sigma_{1}>\frac{3}{2}$ and admissible pair $(\tilde{p},\tilde{q})$
\begin{eqnarray}\label{eq-17}
\|\langle x-D(t)\rangle^{-\sigma_{1}}\Phi(t)\|_{L^{2}_{t}L^{2}_{x}}
&\leq& \big\|\langle x-D(t)\rangle^{-\sigma_{1}}e^{it\frac{\Delta}{2}}\psi_{0}\big\|_{L^{2}_{t}L^{2}_{x}}+
\Big\|\langle x-D(t)\rangle^{-\sigma_{1}}\int_{0}^{t}e^{i(t-s)\frac{\Delta}{2}}F(s)ds\Big\|_{L^{2}_{t}L^{2}_{x}}\nonumber\\
&\leq& C\Big(\|\psi_{0}\|_{L^{2}}+\Big\|\int_{0}^{t}e^{i(t-s)\frac{\Delta}{2}}F(s)ds\Big\|_{L^{2}_{t}L^{6}_{x}}\Big)\nonumber\\
&\leq& C\Big(\|\psi_{0}\|_{L^{2}}+\|F\|_{L^{\tilde{p}'}_{t}L^{\tilde{q}'}_{x}}\Big),
\end{eqnarray}
which combining with Proposition \ref{lem-1} leads to
\begin{eqnarray}\label{eq-18}
&&\|\langle x-D(t)\rangle^{-\sigma}P_{c}(t)\Phi(t)\|_{L^{2}_{t}L^{2}_{x}}\nonumber\\
&\leq& \big\|\langle x-D(t)\rangle^{-\sigma}\Phi(t)\big\|_{L_{t}^{2}L_{x}^{2}}+\sum^{2}_{\kappa=1}\big\|\langle x-D(t)\rangle^{-\sigma}P_{\kappa b}(t)\Phi(t)\big\|_{L_{t}^{2}L_{x}^{2}}\nonumber\\
&\leq& \big\|\langle x-D(t)\rangle^{-\sigma}\Phi(t)\big\|_{L_{t}^{2}L_{x}^{2}}+\sup_{t}\big\|\langle x-D(t)\rangle^{-\sigma}P_{1b}(t)\langle x\rangle^{\sigma_{1}}\big\|_{L_{x}^{2}\rightarrow L_{x}^{2}}\big\|\langle x\rangle^{-\sigma_{1}}\Phi(t)\big\|_{L_{t}^{2}L_{x}^{2}}\nonumber\\
&&\ \ \ \ \ \ \ \ \ \ \ \ \ \ +\sup_{t}\big\|\langle x-D(t)\rangle^{-\sigma}P_{2b}(t)\langle x-\vec{e}_{1}t\rangle^{\sigma_{1}}\big\|_{L_{x}^{2}\rightarrow L_{x}^{2}}\big\|\langle x-\vec{e}_{1}t\rangle^{-\sigma_{1}}\Phi(t)\big\|_{L_{t}^{2}L_{x}^{2}}\nonumber\\
&\leq&C\|\psi_{0}\|_{L^{2}}+C\|F\|_{L^{\tilde{p}'}_{t}L^{\tilde{q}'}_{x}}.
\end{eqnarray}
The local decay estimate for the first term of the right side of (\ref{eq-11-3}) follows from (\ref{eq-10-2}), and
the local decay estimate for the second term of the right side of (\ref{eq-11-3}) follows
\begin{eqnarray}\label{eq-19}
&&\Big\|\int^{t}_{0}\langle x-D(t)\rangle^{-\sigma}P_{c}(t)U(t,s)(V_{1}+V_{2}(\cdot-\vec{e}_{1}s))\Phi(s)ds\Big\|_{L_{t}^{2}L_{x}^{2}}\nonumber\\
&\leq& C\Big\|\int^{t}_{0}\langle t-s\rangle^{-\frac{3}{2}}\big(\big\||V_{1}|^{\frac{1}{2}}\Phi(s)\big\|_{L_{x}^{2}}+\big\||V_{2}(\cdot-\vec{e}_{1}s)|^{\frac{1}{2}}\Phi(s)\big\|_{L_{x}^{2}}\big)ds\Big\|_{L_{t}^{2}}\nonumber\\
&\leq& C\Big(\big\||V_{1}|^{\frac{1}{2}}\langle x\rangle^{\sigma}\langle x\rangle^{-\sigma}\Phi(t)\big\|_{L^{2}_{t}L_{x}^{2}}+\big\||V_{2}(\cdot-\vec{e}_{1}t)|^{\frac{1}{2}}\langle x-\vec{e}_{1}t\rangle^{\sigma}\langle x-\vec{e}_{1}t\rangle^{-\sigma}\Phi(t)\big\|_{L^{2}_{t}L_{x}^{2}}\Big)\nonumber\\
&\leq& C\|\psi_{0}\|_{L^{2}}+C\|F\|_{L^{\tilde{p}'}_{t}L^{\tilde{q}'}_{x}},
\end{eqnarray}
where we use Kato-Jensen estimates, Young's inequality and (\ref{eq-17}). Thus the local decay estimate for the source term (\ref{eq-15}) follows from the (\ref{eq-11-3})-(\ref{eq-19}).

\emph{Step 4, local decay estimates for the solution of (\ref{eq-14'}).} The solution of (\ref{eq-14'}) can be presented by Duhamel's formula as follows,
\begin{eqnarray}
P_{c}(t)\psi(t)=U(t)\psi_{0}-i\int_{0}^{t}U(t,s)P_{c}(s)F(s)ds.\nonumber
\end{eqnarray}
Then by Steps 2 and 3, for $\sigma>\frac{3}{2}$ and admissible pair $(\tilde{p},\tilde{q})$, we have
\begin{eqnarray}\label{eq-29}
\|\langle x-D(t)\rangle^{-\sigma}P_{c}(t)\psi(t)\|_{L_{t}^{2}L_{x}^{2}}&\leq& \|\langle x-D(t)\rangle^{-\sigma}U(t)\psi_{0}\|_{L_{t}^{2}L_{x}^{2}}\nonumber\\
&&\ \ \ \ \ \ \ \ \ \ \ +\
\Big\|\int_{0}^{t}\langle x-D(t)\rangle^{-\sigma}U(t,s)P_{c}(s)F(s)ds\Big\|_{L_{t}^{2}L_{x}^{2}}\nonumber\\
&\leq& C\|\psi_{0}\|_{L^{2}}+C\|F\|_{L^{\tilde{p}'}_{t}L^{\tilde{q}'}_{x}}.
\end{eqnarray}

\emph{Step 5, the Strichartz estimates.} We write equation (\ref{eq-14'}) as
\begin{eqnarray}
i\partial_{t}(P_{c}(t)\psi)=-\frac{1}{2}\Delta P_{c}(t)\psi+(V_{1}+V_{2}(x-\vec{e}_{1}t))P_{c}(t)\psi+P_{c}(t)F(t).\nonumber
\end{eqnarray}
The Duhamel formula leads to
\begin{eqnarray}\label{eq-20}
\|P_{c}(t)\psi(t)\|_{L_{t}^{p}L_{x}^{q}}&\leq& \big\|e^{it\frac{\Delta}{2}}\psi_{0}\big\|_{L^{p}_{t}L_{x}^{q}}+\Big\|\int_{0}^{t}e^{i(t-s)\frac{\Delta}{2}}P_{c}(s)F(s)ds\Big\|_{L_{t}^{p}L_{x}^{q}}\nonumber\\
&&\ \ \ \ \ \ \ \ +\ \Big\|\int_{0}^{t}e^{i(t-s)\frac{\Delta}{2}}(V_{1}+V_{2}(\cdot-\vec{e}_{1}s))P_{c}(s)\psi(s)ds\Big\|_{L_{t}^{p}L_{x}^{q}}\nonumber\\
&\leq& C\|\psi_{0}\|_{L^{2}}+\|P_{c}(t)F(t)\|_{L^{\tilde{p}'}_{t}L^{\tilde{q}'}_{x}}+\|(V_{1}+V_{2}(\cdot-\vec{e}_{1}t))P_{c}(t)\psi(t)\|_{L_{t}^{2}L_{x}^{\frac{6}{5}}}\nonumber\\
&\leq& C\Big ( \|\psi_{0}\|_{L^{2}}+\|F(t)\|_{L^{\tilde{p}'}_{t}L^{\tilde{q}'}_{x}}\Big),
\end{eqnarray}
where in the last inequality we use the $L^{\tilde{q}'}$ boundedness of $P_{c}(t)$ and the fact (follows from (\ref{eq-29})) that
\begin{eqnarray}
&&\|(V_{1}+V_{2}(\cdot-\vec{e}_{1}t))P_{c}(t)\psi(t)\|_{L_{t}^{2}L_{x}^{\frac{6}{5}}}\nonumber\\
&\leq& C\big\||V_{1}|^{\frac{1}{2}}\langle x\rangle^{\sigma}\langle x\rangle^{-\sigma}P_{c}(t)\psi(t)\big\|_{L^{2}_{t}L_{x}^{2}}
+C\big\||V_{2}(\cdot-\vec{e}_{1}t)|^{\frac{1}{2}}\langle x-\vec{e}_{1}t\rangle^{\sigma}\langle x-\vec{e}_{1}t\rangle^{-\sigma}P_{c}(t)\psi(t)\big\|_{L^{2}_{t}L_{x}^{2}}\nonumber\\
&\leq& C\Big(\|\psi_{0}\|_{L^{2}}+\|F(t)\|_{L^{\tilde{p}'}_{t}L^{\tilde{q}'}_{x}}\Big).\nonumber
\end{eqnarray}
Hence we finish the proof.

\section{The Strichartz estimates for matrix charge transfer model}
\setcounter{equation}{0}

\subsection{Notations and assumptions}
In this section, the endpoint Strichartz estimates for charge transfer model with matrix potentials will be considered. Without loss of generality, we assume that the number of potentials in (\ref{MSch eq}) is $m=2$ and
that the velocities are $\vec{v}_{1}=0$, $\vec{v}_{2}=(1,0,\ldots,0)=\vec{e}_{1}$. Thus we turn to study  the problem
\begin{eqnarray}
\left\{%
\begin{array}{ll}\label{main-equation-2}
i\partial_{t}\vec{\psi}=\mathscr{H}_{0}\vec{\psi}+V_{1}(t,x)\vec{\psi}+V_{2}(t,x-\vec{e}_{1}t)\vec{\psi}:=\mathscr{H}(t)\vec{\psi},\\
\vec{\psi}(0,\cdot)=\vec{\psi}_{0},\ \ x\in \mathbb{R}^{n},\\
\end{array}
\right.
\end{eqnarray}
 where
$$
\mathscr{H}_{0}=\left(
  \begin{array}{cc}
    -\frac{1}{2}\Delta & 0 \\
    0 & \frac{1}{2}\Delta \\
  \end{array}
\right),\
V_{\kappa}(t,x)=\left(
  \begin{array}{cc}
    U_{\kappa}(x) & -e^{i\theta_{\kappa}(t,x)}W_{\kappa}(x) \\
    e^{-i\theta_{\kappa}(t,x)}W_{\kappa}(x) & -U_{\kappa}(x) \\
  \end{array}
  \right), \ \ \kappa=1,2,
$$
with
\begin{eqnarray}\label{eq-35}
\theta_{1}(t,x)=\alpha^{2}_{1}t+\gamma_{1} \ \ {\rm and}\ \  \theta_{2}(t,x)=(|\vec{e}_{1}|^{2}+\alpha^{2}_{2})t+2x\cdot \vec{e}_{1}+\gamma_{2}.
\end{eqnarray}

 As seen in Section 2, Galilei transform plays important roles in the scalar case, here we will need the  vector-valued Galilei transform which is defined as follows:
\begin{eqnarray}\label{eq-33}
\mathcal {G}_{\vec{v},y}(t)\left(
  \begin{array}{cc}
    \psi_{1}  \\
    \psi_{2}  \\
  \end{array}
  \right):=\left(
  \begin{array}{cc}
    G_{\vec{v},y}(t)\psi_{1}  \\
    \overline{G_{\vec{v},y}(t)\overline{\psi_{2}}}\\
  \end{array}
  \right)
\end{eqnarray}
where $G_{\vec{v},y}(t)$ are defined by (\ref{Ga-1}). It is easy to see that the transformations $\mathcal {G}_{\vec{v},y}(t)$ are isometries on all $L^{p}$ spaces. We set $\mathcal {G}_{\vec{v}}(t)=\mathcal {G}_{\vec{v},0}(t)$ for $y=0$, and then $\mathcal {G}_{\vec{v}}(t)^{-1}=\mathcal {G}_{-\vec{v}}(t)$.
In contrast to the scalar case, a modulation transform
\begin{eqnarray}\label{eq-36}
\mathcal {M}(t)=\mathcal {M}_{\alpha,\gamma}(t)=\left(
  \begin{array}{cc}
    e^{-\frac{i}{2}\omega(t)}& 0 \\
    0 & e^{\frac{i}{2}\omega(t)} \\
  \end{array}
\right)
\end{eqnarray}
with $\omega(t)=\alpha^{2}t+\gamma$ will be involved, which combing the Galilei transform can reduce the Schr\"{o}dinger operator with one time-dependent matrix potential ($m=1$ in (\ref{MSch eq})) to time-independent case.


To study the endpoint Strichartz estimates for matrix charge transfer model, we would impose certain  assumptions on time-independent Hamiltonians:
\begin{eqnarray}\label{eq-34}
\mathscr{H}_{\kappa}=\left(
  \begin{array}{cc}
    -\frac{1}{2}\Delta+\frac{1}{2}\alpha^{2}_{\kappa}+U_{\kappa} & -W_{\kappa} \\
    W_{\kappa} &  \frac{1}{2}\Delta-\frac{1}{2}\alpha^{2}_{\kappa}-U_{\kappa} \\
  \end{array}
  \right), \ \ \ \ 1\leq\kappa\leq 2.
\end{eqnarray}
In fact, motivated by applications to NLS, the authors of \cite{RSS1} require that $\mathscr{H}_{\kappa}$ to be admissible and satisfy stability condition.
For this end, following \cite[Definition 7.2]{RSS1} we will introduce these conditions in general setting. Set
 $H=-\frac{1}{2}\Delta+\mu$ where $\mu>0$ and
$$
B=\left(
  \begin{array}{cc}
    H & 0 \\
    0 & -H \\
  \end{array}
  \right),\ \ \ \
 V=\left(
  \begin{array}{cc}
    U & -W \\
    W & -U \\
  \end{array}
  \right),\ \
  A=B+V=\left(
  \begin{array}{cc}
    H+U & -W \\
    W & -H-U \\
  \end{array}
  \right),
$$
with $U,\ W$ real-valued. We remark that such non-selfadjoint operators arise when linearizing a focusing NLS equation around one soliton and then  studying the nonlinear asymptotic stability of one soliton.  One can also find many  spectra results on the special non-selfadjoint operator in \cite{RSS1, RSS2} and the exponentially decaying properties of generalized  eigenfunctions of such operators in \cite{HL}, which in parts motivate the following spectral assumptions on A.

\begin{defi}\label{defi-4}
Let $A=B+V$ defined as above. We call the operator A on $\mathcal {H}=L^{2}(\mathbb{R}^{n})\times L^{2}(\mathbb{R}^{n})$ {\rm admissible} provided that

{\rm (I)} ${\rm spec(A)\subset\mathbb{R}}$ and ${\rm spec(A)\cap (-\mu,\mu)=\{\omega_{\ell}|0\leq\ell\leq M\}}$, where $\omega_{0}=0$ and all $\omega_{\ell}$ are distinct eigenvalues. There are no eigenvalues in {\rm $spec_{ess}(A)$}. Furthermore, the points $\pm\mu$ are not resonances of $A$.

{\rm (II)} For $1\leq \ell\leq M$,  $L_{\ell}{\rm :=ker(A-\omega_{\ell})^{2}=ker(A-\omega_{\ell})}$, and  ${\rm ker(A)\subsetneq ker(A^{2})=ker(A^{3})=:}L_{0}$.
        Moreover, these spaces are finite dimensional.

{\rm (III)} The ranges ${\rm Ran(A-\omega_{\ell})}$ for $1\leq \ell\leq M$ and ${\rm Ran(A^{2})}$ are closed.

{\rm (IV)} The spaces $L_{\ell}$ are spanned by exponentially decreasing functions in $\mathcal {H}$ (say with bound $e^{-\varepsilon_{0}|x|}$).

{\rm (V)} All these assumptions hold as well for the adjoint $A^{\ast}$. We denote the corresponding (generalized) eigenspaces by $L^{\ast}_{\ell}$.
\end{defi}

For each admissible operator $A$ on $\mathcal {H}$ defined above,  we have  the following useful lemma due to \cite[Lemma 7.3]{RSS1}.
\begin{lem}\label{lem-2}
There is a direct sum decomposition
\begin{eqnarray}\label{eq-30}
\mathcal {H}=\sum_{j=0}^{M}L_{j}+\Big(\sum_{j=0}^{M}L_{j}^{\ast}\Big)^{\perp},
\end{eqnarray}
which is invariant under $A$. Let $P_{c}$ denote the projection onto $\Big(\sum_{j=0}^{M}L_{j}^{\ast}\Big)^{\perp}$ which is induced by (\ref{eq-30}) and set $P_{b}=I-P_{c}$. Then $AP_{c}=P_{c}A$ and there exist number $c_{ij}$ such that
\begin{eqnarray}\label{eq-31}
P_{b}f=\sum_{i,j}c_{ij}\phi_{j}\langle \psi_{i},f\rangle.
\end{eqnarray}
where $\phi_{j},\ \psi_{i}$ are exponentially decreasing functions.
\end{lem}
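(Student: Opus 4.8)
The plan is to build the decomposition (\ref{eq-30}) out of the Riesz spectral projections attached to the isolated eigenvalues $\omega_0,\dots,\omega_M$, and then to identify the complementary subspace by a duality argument against the adjoint $A^\ast$. For each $0\le\ell\le M$, by (I) the point $\omega_\ell$ is isolated in ${\rm spec}(A)$, so I may fix a small positively oriented circle $\Gamma_\ell\subset\C$ around $\omega_\ell$ that encloses no other spectrum and set
\[
P_\ell=\frac{1}{2\pi i}\oint_{\Gamma_\ell}(z-A)^{-1}\,dz .
\]
The Riesz--Dunford calculus yields bounded projections with $P_\ell P_{\ell'}=\delta_{\ell\ell'}P_\ell$ and $AP_\ell=P_\ell A$, the latter because $A$ commutes with its resolvent and $P_\ell$ maps into the domain of $A$. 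Putting $P_b=\sum_{\ell=0}^M P_\ell$ and $P_c=I-P_b$, commutativity is inherited, so $AP_c=P_cA$ and both ${\rm Ran}(P_b)$ and ${\rm Ran}(P_c)$ are $A$-invariant.

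The crucial---and technically most delicate---step is to identify ${\rm Ran}(P_\ell)=L_\ell$; this is exactly where (II) and (III) are used. The stabilization ${\rm ker}(A-\omega_\ell)^2={\rm ker}(A-\omega_\ell)$ for $\ell\ge1$ (respectively ${\rm ker}(A^3)={\rm ker}(A^2)$ for $\ell=0$), together with the closedness of ${\rm Ran}(A-\omega_\ell)$ (respectively ${\rm Ran}(A^2)$), forces $\omega_\ell$ to be a pole of the resolvent of order $1$ (respectively $2$) rather than an essential singularity, an outcome that is not automatic for the non-selfadjoint $A$. For such a pole the range of the Riesz projection coincides with the algebraic eigenspace ${\rm ker}(A-\omega_\ell)^{p_\ell}$, which by (II) equals $L_\ell$ and is finite dimensional. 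Hence ${\rm Ran}(P_b)=\sum_{j=0}^M L_j$.

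To pin down ${\rm Ran}(P_c)$, I would run the identical construction for $A^\ast$, which is permitted by (V). Since the $\omega_\ell$ are real, the adjoint $P_\ell^\ast$ is precisely the Riesz projection of $A^\ast$ at $\omega_\ell$, so $P_b^\ast=\sum_\ell P_\ell^\ast$ satisfies ${\rm Ran}(P_b^\ast)=\sum_j L_j^\ast$. Combining the projection identity ${\rm Ran}(P_c)={\rm ker}(P_b)$ with the general fact ${\rm ker}(P_b)=\big({\rm Ran}(P_b^\ast)\big)^\perp$ for bounded operators gives ${\rm Ran}(P_c)=\big(\sum_j L_j^\ast\big)^\perp$, which is the decomposition (\ref{eq-30}).

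Finally, for the representation (\ref{eq-31}) I would exploit that $P_b$ is finite rank. Choose a basis $\{\phi_j\}$ of ${\rm Ran}(P_b)=\sum_j L_j$ made of (generalized) eigenfunctions of $A$, which are exponentially decreasing by (IV), and write $P_bf=\sum_j\phi_j\,\ell_j(f)$ with bounded linear functionals $\ell_j$. Linear independence of the $\phi_j$ shows each $\ell_j$ annihilates ${\rm ker}(P_b)={\rm Ran}(P_c)=\big(\sum_\ell L_\ell^\ast\big)^\perp$, so its Riesz representer lies in $\big(\big(\sum_\ell L_\ell^\ast\big)^\perp\big)^\perp=\sum_\ell L_\ell^\ast$, a finite-dimensional space spanned by the exponentially decreasing eigenfunctions $\psi_i$ of $A^\ast$ (with (IV) applied to $A^\ast$). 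Expanding each representer in the $\psi_i$ produces $P_bf=\sum_{i,j}c_{ij}\phi_j\langle\psi_i,f\rangle$ with $\phi_j,\psi_i$ exponentially decreasing, as required.
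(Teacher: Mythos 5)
The paper does not actually prove this lemma: it is imported verbatim as \cite[Lemma 7.3]{RSS1}, so there is no in-paper argument to compare against. Judged on its own terms, your Riesz-projection construction is a legitimate and essentially complete route to the statement, and your last two steps (identifying $\mathrm{Ran}(P_c)=\ker(P_b)=\big(\mathrm{Ran}(P_b^{\ast})\big)^{\perp}=\big(\sum_j L_j^{\ast}\big)^{\perp}$ via assumption (V), and reading off the finite-rank kernel representation (\ref{eq-31}) from a basis of $\mathrm{Ran}(P_b)$ together with representers of the coefficient functionals in $\big(\ker(P_b)\big)^{\perp}=\sum_\ell L_\ell^{\ast}$) are clean and correct.

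The one place where you assert more than you establish is the step you yourself flag as delicate: the claim that kernel stabilization (II) plus closedness of the range (III) ``forces'' $\omega_\ell$ to be a pole of the resolvent, so that $\mathrm{Ran}(P_\ell)=L_\ell$. For a non-selfadjoint operator, an isolated spectral point is a pole iff \emph{both} the ascent and the descent of $A-\omega_\ell$ are finite; (II) controls the ascent and (III) gives closed range, but finiteness of the descent is not an immediate consequence and needs an argument. It can be supplied in two ways. Either prove directly that a quasinilpotent operator with stabilized kernel and closed range on the Riesz subspace must be nilpotent (one shows it is bounded below on its own closed range, contradicting $0\in\partial\sigma\subset\sigma_{ap}$ unless that range is trivial); or, more in keeping with how such lemmas are usually handled for this class of operators, observe that $B-\omega_\ell$ is invertible for $\omega_\ell\in(-\mu,\mu)$ and $V$ is relatively compact, so $A-\omega_\ell$ is Fredholm of index zero and analytic Fredholm theory makes $\omega_\ell$ a finite-order pole with finite-rank Riesz projection onto the algebraic eigenspace, which (II) then identifies with $L_\ell$. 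Once that step is justified, the rest of your proof goes through; an alternative, more elementary route (closer in spirit to \cite{RSS1}) avoids Riesz projections altogether by using the closed range theorem to write $\mathrm{Ran}(A-\omega_\ell)=\big(L_\ell^{\ast}\big)^{\perp}$ and a dimension count $\dim L_\ell=\dim L_\ell^{\ast}$ to conclude $\mathcal{H}=L_\ell\oplus\big(L_\ell^{\ast}\big)^{\perp}$ for each $\ell$ separately.
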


We further make additional assumption for $A$, namely the stability assumption,
\begin{eqnarray}\label{eq-32}
\sup_{t\in \mathbb {R}}\big\|e^{itA}P_{c}\big\|<\infty,
\end{eqnarray}
where $\|\cdot\|$ refers to the operator norm on $\mathcal {H}$ without further description and $P_{c}$ is the projection  defined as in Lemma \ref{lem-2}.
The stability assumption (\ref{eq-32}) says that the semigroup is uniformly bounded in $t$ for all functions in $\Big(\sum_{j=0}^{M}L_{j}^{\ast}\Big)^{\perp}$, which is related to  the notion of {\it linear stability } in the context of stability
of soliton solution of NLS; see, e.g., \cite{Wein2}. Furthermore, if $A$ is admissible in the sense of Definition \ref{defi-4} and satisfies (\ref{eq-32}),  we know that the semigroup is actually uniformly bounded on all functions that have no component in the space $L_{0}$. Otherwise, it can grow at most like $t$.

\begin{rem}{\rm If the functions $U$ and $W$ in matrix $V$  are $-\Delta$ bounded with relative bound zero and decay to zero at infinity, the authors in \cite{HL} identified the essential spectrum and proved the assumption (IV) is satisfied, see also \cite{RSS2}. If the matrix operator $A$ comes from the linearization of NLS, then one can find further results on eigenvalues, generalized eigenspaces and stability condition, see e.g. \cite{Cuc1, ES, RSS2, Wein2}.}
\end{rem}

\vskip0.3cm

Let us turn to the matrix charge transfer model (\ref{main-equation-2}). Since the projections $P_{c}(\mathscr{H}_{\kappa})$ and $P_{b}(\mathscr{H}_{\kappa})$ as in Lemma  \ref{lem-2}  are no longer orthogonal,
in \cite{RSS1}, the authors introduce a notion of ``scattering states" instead of ``asymptotically orthogonal to the bound states" in scalar case,  and proved that if one choose a initial data in ``scattering states'', the decay estimates holds. The ``scattering states'' for matrix case are defined as follows.
\begin{defi}\label{def-3}
Assume that  $\mathscr{H}_{\kappa} \ (\kappa=1,2)$ is admissible in the sense of Definition \ref{defi-4} and satisfies the stability condition (\ref{eq-32}). Let $\mathcal {U}(t,s)$ be the propagator of the equation (\ref{main-equation-2}). Then we say that $\vec{f}$ is a scattering state  if $\vec{f}$ belongs to
  \begin{eqnarray*}
\mathscr{A}(s)=\big\{\vec{f}\in \mathcal {H}:\ \|P_{b}(\mathscr{H}_{1},t)\mathcal {U}(t,s)\vec{f}\|_{L^2}+\|P_{b}(\mathscr{H}_{2},t)\mathcal {U}(t,s)\vec{f}\|_{L^2}\rightarrow0\ \ {\rm as} \ \ t\rightarrow +\infty\big\},
\end{eqnarray*}
where
\begin{eqnarray}
P_{b}(\mathscr{H}_{1},t)=\mathcal {M}_{\alpha_{1}, \gamma_{1}}(t)^{-1}P_{b}(\mathscr{H}_{1})\mathcal {M}_{\alpha_{1}, \gamma_{1}}(t)
\end{eqnarray}
\begin{eqnarray}
P_{b}(\mathscr{H}_{2},t)=\mathcal {G}_{\vec{e}_{1}}(t)^{-1}\mathcal {M}_{\alpha_{2}, \gamma_{2}}(t)^{-1}P_{b}(\mathscr{H}_{2})\mathcal {M}_{\alpha_{2}, \gamma_{2}}(t)\mathcal {G}_{\vec{e}_{1}}(t)
\end{eqnarray}
with $\mathcal {G}_{\vec{e}_{1}}(t)$ and $\mathcal {M}_{\alpha, \gamma}(t)$ defined by (\ref{eq-33}) and (\ref{eq-36}), respectively.
\end{defi}

\begin{rem}\label{rem-1'} {\rm
 (i) It is proved in \cite[Proposition 8.5]{RSS1} that if $\vec{f}\in \mathscr {A}(s)$ for given $s$, then
\begin{eqnarray}
\|P_{b}(\mathscr{H}_{1},t)\mathcal {U}(t,s)\vec{f}\|_{L^2}+\|P_{b}(\mathscr{H}_{2},t)\mathcal {U}(t,s)\vec{f}\|_{L^2}\lesssim e^{-\alpha t}\|\vec{f}\|_{L^{2}}\nonumber
\end{eqnarray}
for some $\alpha>0$.

(ii) The authors in \cite[Theorem 8.4]{RSS1} proved that if the initial data $\psi_{0}\in \mathscr {A}(0)\cap L^{1}$,
\begin{eqnarray}
\big\|\mathcal {U}(t)\psi_{0}\big\|_{L^{2}+L^{\infty}}\lesssim \langle t\rangle^{-\frac{3}{2}}\|\psi_{0}\|_{L^{1}\cap L^{2}}.\nonumber
\end{eqnarray}
Moreover, one can remove $L^2$ on the left hand side if in addition the matrix potentials $V_{\kappa}(t,x)$ satisfy  $\sup_t\|\widehat{V}_{\kappa}(t,\cdot)\|_{L^{1}}<\infty$ for $\kappa=1,2$.
}
\end{rem}

\subsection{Wave operators and asymptotic completeness}

As in the scalar case, the asymptotic completeness has been taken into account to prove the Strichartz estimates. Let $\mathcal {U}(t)$ be the propagator of the equation (\ref{main-equation-2}) and $P_{c}(s)$ is the projection onto $\mathscr {A}(s)$, similarly to the scalar case, one can easily check from Definition \ref{def-3} that
\begin{eqnarray}\label{ex-eq-2}
\mathcal{U}(t,s)\mathscr{A}(s)=\mathscr{A}(t).
\end{eqnarray}
 Besides,  we need to make an additional assumption on $\mathcal {U}(t)$.

\textbf{Growth Assumption:} \emph{There exists a constant $m_{0}>0$ such that}
\begin{eqnarray}\label{ap}
\|\mathcal {U}(t)\|_{L^{2}\rightarrow L^{2}}\lesssim \langle t\rangle^{m_{0}}.
\end{eqnarray}

\begin{rem}{\rm
(i) We know that $\mathcal {U}(t)$ is no longer unitary since $\mathscr{H}(t)$ is not a self-adjoint operator. On the other hand,  even if $\mathscr{H}$ is a admissible matrix Schr\"{o}dinger operator  with only one time-independent potential ( in sense of Definition \ref{defi-4} and (\ref{eq-32})) , $e^{it\mathscr{H}}$ may grow like $t$ in $L^{2}$ ( see e.g. \cite{RSS1} ).

(ii) Under the assumption (\ref{ap}), one can use the same method as in the proof of Proposition 2 in \cite{Bou} to show the Sobolev estimates for $\mathcal {U}(t)$
\begin{eqnarray}\label{se}
\|\mathcal {U}(t)\|_{H^{s}\rightarrow H^{s}}\lesssim \langle t\rangle^{m_{s}},
\end{eqnarray}
where $H^{s}$ ($s>0$) is the Sobolev space and $m_{s}>0$ depends on $m_{0}$.
}

\end{rem}

Now we first define wave operators for matrix case,
\begin{eqnarray}
\Omega^{-}_{1}(s)=s-\lim_{t\rightarrow+\infty}\mathcal{U}(s,t)\mathcal {M}_{\alpha_{1}, \gamma_{1}}(t)^{-1}\mathcal{U}_{1}(t,s)P_{b}(\mathscr{H}_{1})\mathcal {M}_{\alpha_{1}, \gamma_{1}}(s),\nonumber
\end{eqnarray}
\begin{eqnarray}
\Omega^{-}_{2}(s)=s-\lim_{t\rightarrow+\infty}\mathcal{U}(s,t)\mathcal {G}_{-\vec{e_{1}}}(t)\mathcal {M}_{\alpha_{2}, \gamma_{2}}(t)^{-1}\mathcal{U}_{2}(t,s)P_{b}(\mathscr{H}_{2})\mathcal {M}_{\alpha_{2}, \gamma_{2}}(s)\mathcal {G}_{\vec{e_{1}}}(s).\nonumber
\end{eqnarray}
We investigate the range of $\Omega^{-}_{1}(s)$ for each $s$. Notice that
\begin{eqnarray}\label{commut-1}
\mathcal {U}(s,t)\mathcal {M}_{\alpha_{1}, \gamma_{1}}(t)^{-1}=
\mathcal {M}_{\alpha_{1}, \gamma_{1}}(s)^{-1}\widetilde{\mathcal {U}}(s,t).
\end{eqnarray}
We then turn to consider
\begin{eqnarray}
\widetilde{\Omega}^{-}_{1}(s)=s-\lim_{t\rightarrow+\infty}\widetilde{\mathcal{U}}(s,t)\mathcal{U}_{1}(t,s)P_{b}(\mathscr{H}_{1}).\nonumber
\end{eqnarray}
Here $\widetilde{\mathcal {U}}(s,t)$ is the propagator of $i\partial_{t}\vec{\psi}=\widetilde{\mathscr{H}}(t)\vec{\psi}$ with
\begin{eqnarray}\label{eq-38}
\widetilde{\mathscr{H}}(t)=\mathscr{H}_{1}+\widetilde{V}_{2}(t,x-\vec{e}_{1}t)
\end{eqnarray}
where $\mathscr{H}_{1}$ is defined by (\ref{eq-34}),
\begin{eqnarray}
\widetilde{V}_{2}(t,x)=\left(
  \begin{array}{cc}
    U_{2}(x) & -e^{i(\theta_{2}-\theta_{1})(t,x)}W_{2}(x) \\
    e^{-i(\theta_{2}-\theta_{1})(t,x)}W_{2}(x) & -U_{2}(x) \\
  \end{array}
  \right), \nonumber
\end{eqnarray}
and $\theta_{1}, \theta_{2}$ are defined by (\ref{eq-35}), and it follows from the identity (\ref{commut-1}) that $\widetilde{\mathcal {U}}(t)$ also has the same growth as (\ref{ap}) and (\ref{se}).

\begin{lem}\label{thm-4} Assume that the assumption (\ref{ap}) is satisfied,  $\mathscr{H}_{\kappa} \ (\kappa=1,2)$ is admissible in the sense of Definition \ref{defi-4} and satisfies the stability condition (\ref{eq-32}).
Let $\vec{u}$ be a generalized eigenfunction of $\mathscr{H}_{1}$. Then for $\sigma\geq0$ and multi-index $\gamma$ with $|\gamma|\geq0$,
$$\big\|\langle x\rangle^{\sigma}\partial^{\gamma}_{x}\widetilde{\Omega}^{-}_{1}(s)\vec{u}\big\|_{L^{2}}$$
 is uniformly bounded in $s$.
\end{lem}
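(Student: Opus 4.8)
The plan is to mirror the proof of the scalar Lemma~\ref{thm0}, with the unitary charge transfer propagator replaced by the non-self-adjoint matrix propagator $\widetilde{\mathcal{U}}$ and the bound state $u_{j}$ replaced by the generalized eigenfunction $\vec{u}$. Writing $\widetilde{\mathscr{H}}(t)=\mathscr{H}_{1}+\widetilde{V}_{2}(t,\cdot-\vec{e}_{1}t)$ as in~\eqref{eq-38} and letting $\mathcal{U}_{1}(t,s)=e^{-i(t-s)\mathscr{H}_{1}}$ be the propagator of the time-independent $\mathscr{H}_{1}$ from~\eqref{eq-34}, differentiating $\widetilde{\mathcal{U}}(s,r)\mathcal{U}_{1}(r,s)\vec{u}$ in $r$ and integrating gives the Duhamel identity
\begin{eqnarray*}
\widetilde{\mathcal{U}}(s,t)\mathcal{U}_{1}(t,s)\vec{u}=\vec{u}+i\int_{s}^{t}\widetilde{\mathcal{U}}(s,r)\,\widetilde{V}_{2}(r,\cdot-\vec{e}_{1}r)\,\mathcal{U}_{1}(r,s)\vec{u}\,dr.
\end{eqnarray*}
Since $P_{b}(\mathscr{H}_{1})\vec{u}=\vec{u}$, letting $t\to+\infty$ represents $\widetilde{\Omega}^{-}_{1}(s)\vec{u}$ as $\vec{u}$ plus the improper integral of the kernel $\widetilde{K}(r,s,\cdot):=\widetilde{V}_{2}(r,\cdot-\vec{e}_{1}r)\mathcal{U}_{1}(r,s)\vec{u}$, and it remains to bound $\|\langle x\rangle^{\sigma}\partial^{\gamma}_{x}\widetilde{\mathcal{U}}(s,r)\widetilde{K}(r,s,\cdot)\|_{L^{2}}$ and integrate in $r$.

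The first ingredient is a weighted polynomial growth bound for the matrix propagator, the analogue of~\eqref{eq-22}: for integer $\sigma\ge0$ and multi-index $\gamma$,
\begin{eqnarray*}
\big\|\langle x\rangle^{\sigma}\partial^{\gamma}_{x}\widetilde{\mathcal{U}}^{\pm1}(t)g\big\|_{L^{2}}\lesssim \langle t\rangle^{C(\sigma,|\gamma|)}\sum_{|\beta|\le \sigma+|\gamma|}\big\|\langle x\rangle^{\sigma}\partial^{\beta}_{x}g\big\|_{L^{2}}.
\end{eqnarray*}
I would prove this starting from the Growth Assumption~\eqref{ap} and the Sobolev bound~\eqref{se}, which already control $\partial^{\gamma}_{x}\widetilde{\mathcal{U}}(t)$, and then commuting the weight $\langle x\rangle^{\sigma}$ through $\widetilde{\mathcal{U}}$: each commutator with the diagonal part $\mathscr{H}_{0}$ trades a power of the weight for a derivative, while applying $\langle x\rangle^{\sigma}$ to the moving potential costs only a factor $\langle t\rangle^{\sigma}$, since $\langle x\rangle^{\sigma}\lesssim\langle x-\vec{e}_{1}t\rangle^{\sigma}\langle t\rangle^{\sigma}$ and $\widetilde{V}_{2}$ is localized. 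A Duhamel/Gronwall iteration then yields the claimed polynomial growth.

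The second ingredient is the rapid decay of $\widetilde{K}$. By condition~(IV) of Definition~\ref{defi-4} (see also Lemma~\ref{lem-2}) and elliptic regularity, $\vec{u}$ is smooth and exponentially localized; moreover $\mathcal{U}_{1}(r,s)\vec{u}$ stays in the finite dimensional generalized eigenspace, hence remains exponentially localized near the origin, growing at most linearly in $r-s$, the only source of growth being the order two Jordan block at the eigenvalue $0$ (condition~(II) of Definition~\ref{defi-4}). Because $\widetilde{V}_{2}(r,\cdot-\vec{e}_{1}r)$ is localized near $x=\vec{e}_{1}r$, the spatial separation forces, for every $\sigma,\beta$ and every $N$,
\begin{eqnarray*}
\big\|\langle x\rangle^{\sigma}\partial^{\beta}_{x}\widetilde{K}(r,s,\cdot)\big\|_{L^{2}}\lesssim \langle r-s\rangle\,e^{-c|r|}\lesssim \langle r\rangle^{-N},
\end{eqnarray*}
uniformly for $r\ge s\ge0$, since there $\langle r-s\rangle\le\langle r\rangle$. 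Splitting $\widetilde{\mathcal{U}}(s,r)=\widetilde{\mathcal{U}}(s,0)\widetilde{\mathcal{U}}(0,r)$ and applying the weighted bound twice exactly as in~\eqref{eq-23} gives an integrand controlled by $\langle s\rangle^{a}\langle r\rangle^{b-N}$; choosing $N$ large and using $\langle s\rangle\le\langle r\rangle$ on $[s,\infty)$ makes the $r$-integral converge with a bound independent of $s$. This settles integer $\sigma$ and all $\gamma$, and arbitrary $\sigma\ge0$ follows by interpolation, as in the scalar case.

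The main obstacle is the first ingredient: with $\widetilde{\mathscr{H}}(t)$ non-self-adjoint and $\widetilde{\mathcal{U}}$ only polynomially bounded on $L^{2}$, one cannot lean on unitarity as in Lemma~\ref{thm0} but must build the weighted polynomial growth estimate by hand, carefully tracking the interaction of weights and derivatives with $\mathscr{H}_{0}$ and with the two moving potentials. A genuinely new feature compared with the scalar case is the linear-in-time growth of $\mathcal{U}_{1}(r,s)\vec{u}$ produced by the Jordan structure at $0$; this has to be carried through the kernel estimate, but it is harmless because it is dominated by the exponential decay coming from the separation of the moving potential and the localized eigenfunction.
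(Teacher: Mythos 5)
Your proposal is correct and follows essentially the same route as the paper's proof of Lemma \ref{thm-4}: the Duhamel identity for $\widetilde{\mathcal{U}}(s,t)\mathcal{U}_{1}(t,s)P_{b}(\mathscr{H}_{1})\vec{u}$, rapid decay of the kernel $\widetilde{V}_{2}(r,\cdot-\vec{e}_{1}r)\mathcal{U}_{1}(r,s)\vec{u}$ from the spatial separation of the moving potential and the exponentially localized (generalized) eigenfunction, with the Jordan block at $0$ contributing only a harmless factor $(r-s)$, and a weighted polynomial growth bound for $\widetilde{\mathcal{U}}^{\pm1}(t)$ obtained by commuting weights and derivatives through the non-self-adjoint generator and reducing to the Sobolev bound (\ref{se}). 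The paper implements your first ingredient by differentiating the quantity $\Phi_{j,|\gamma|}(t)$ in $t$ and handling the extra term from $\widetilde{\mathscr{H}}(t)-\widetilde{\mathscr{H}}^{\ast}(t)$, which is only a cosmetic variant of your Duhamel/Gronwall iteration.
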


\begin{proof}
We will use similar approach as in the proof of Theorem \ref{thm0}. Notice that it follows from the Duhamel formula that
\begin{eqnarray}\label{eq-38}
\widetilde{\mathcal{U}}(s,t)\mathcal{U}_{1}(t,s)P_{b}(\mathscr{H}_{1})\vec{u}&=&\vec{u}+i\int^{t}_{s}\widetilde{\mathcal{U}}(s,r)\widetilde{V}_{2}(r,\cdot-\vec{e}_{1}r)\mathcal{U}_{1}(r,s)\vec{u}dr\nonumber\\
&=&\vec{u}+i\int^{t}_{s}\widetilde{\mathcal{U}}(s,r)\widetilde{V}_{2}(r,\cdot-\vec{e}_{1}r)e^{-i\omega(r-s)}\vec{u}dr
\end{eqnarray}
for some $\omega\neq0$. If $\vec{u}\in {\rm Ker}(\mathscr{H}^{2}_{1})$, then in the integration of (\ref{eq-38}), $\omega=0$ and  $\vec{u}$
will be replaced by $(r-s)\vec{u}$, which would not affect the proof.

Since $V_{1}$ and $V_{2}$ are exponentially localized  and smooth, $\vec{u}$ is also smooth and exponentially localized in $L^{2}$, thus $\langle x\rangle^{\sigma}\partial^{\gamma}_{x}\vec{u}\in L^{2}$ for all $\sigma>0$ and multi-index $\gamma$.  On the other hand, the vector
$$\vec{K}(r,s,x):=\widetilde{V}_{2}(x-\vec{e}_{1}r)e^{-i\omega(r-s)}\vec{u}(x)$$
has the property that for any $\sigma>0$, multi-index $\beta$ and $N>0$
$$\big\|\langle x\rangle^{\sigma}\partial^{\beta}_{x}\vec{K}(r,s,\cdot)\big\|_{L_{x}^{2}}\leq c(\sigma,|\beta|,N)\langle r\rangle^{-N}.$$
Thus even $\widetilde{U}(t)$ is allowed to have certain growth polynomially, it follows from the above inequality that (\ref{eq-38}) is well-defined. Moreover, for any $j\geq0$ and any multi-index $\gamma$, define
\begin{eqnarray}\label{eq-39}
\Phi_{j,|\gamma|}(t)
=\sum^{j}_{j'=0}\sum_{|\gamma'|=0}^{\gamma}\big\| \langle x\rangle^{j'} \partial^{\gamma'}_{x}\widetilde{\mathcal{U}}(t)\vec{g}\big\|_{L^{2}}.\nonumber
\end{eqnarray}
Notice that
\begin{eqnarray}\label{eq-40}
\frac{d}{dt}\big\|\langle x\rangle^{j'} \partial^{\gamma'}_{x}\widetilde{\mathcal{U}}(t)\vec{g}\big\|_{L^{2}}&=&-i\Big\langle \big[\langle x\rangle^{j'} \partial^{\gamma'}_{x}, \widetilde{\mathscr{H}}(t)\big]\widetilde{\mathcal{U}}(t)\vec{g}, \langle x\rangle^{j'} \partial^{\gamma'}_{x}\widetilde{\mathcal{U}}(t)\vec{g}\Big\rangle\nonumber\\
&&\ \ \ -i\Big\langle \langle x\rangle^{j'} \partial^{\gamma'}_{x}\widetilde{\mathcal{U}}(t)\vec{g},\big[\widetilde{\mathscr{H}}^{\ast}(t), \langle x\rangle^{j'} \partial^{\gamma'}_{x}\big]\widetilde{\mathcal{U}}(t)\vec{g}\Big\rangle\nonumber\\
&&\ \ \ \ +i\Big\langle \langle x\rangle^{j'} \partial^{\gamma'}_{x}\widetilde{\mathcal{U}}(t)\vec{g},\langle x\rangle^{j'} \partial^{\gamma'}_{x}\big(\widetilde{\mathscr{H}}(t)-\widetilde{\mathscr{H}}^{\ast}(t)\big)\widetilde{\mathcal{U}}(t)\vec{g}\Big\rangle
\end{eqnarray}
For first two terms are of same type, we only consider
\begin{eqnarray}\label{eq-40'}
\Big|\big\langle \big[\langle x\rangle^{j'} \partial^{\gamma'}_{x}, \widetilde{H}(t)\big]\widetilde{\mathcal{U}}(t)\vec{g}, \langle x\rangle^{j'} \partial^{\gamma'}_{x}\widetilde{\mathcal{U}}(t)\vec{g}\big\rangle\Big| &\lesssim& \Phi_{j',|\gamma'|}(0)+\langle t\rangle^{2}\sup_{0\leq \tau\leq t}\Phi_{j'-1,|\gamma'|+1}(\tau)\nonumber\\
&\lesssim& \sum^{j'-1}_{k=0}\Phi_{j'-k,|\gamma'|+k}(0)+\langle t\rangle^{2j'}\Phi_{0,|\gamma'|+j'}(\tau)\nonumber\\
&\lesssim& \langle t\rangle^{K}\sum_{|\beta|<|\gamma'|+j}\big\|\langle x\rangle^{j} \partial^{\beta}_{x}\vec{g}\big\|_{L^{2}}
\end{eqnarray}
where we use the Sobolev estimates (\ref{se}) for $\Phi_{0,|\gamma'|+j'}(t)$ and $K>0$ depends on $\gamma$, $j$ and $m_{s}$ in (\ref{se}). For the last term of (\ref{eq-40}), notice that $\widetilde{\mathscr{H}}(t)-\widetilde{\mathscr{H}}^{\ast}(t)$ is the matrix with localized off-diagonal terms, it would also be bounded by (\ref{eq-40'}).
Thus we have
\begin{eqnarray}
\Phi_{j,|\gamma|}(t)\lesssim \langle t\rangle^{K}\sum_{|\beta|<|\gamma|+j}\big\|\langle x\rangle^{j} \partial^{\beta}_{x}\vec{g}\big\|_{L^{2}},\nonumber
\end{eqnarray}
and combining with the idea of the proof of Lemma \ref{thm0} imply the desired conclusion.
\end{proof}

\begin{rem}{\rm
By the structure of $P_{b}$ in Lemma \ref{lem-2},  choosing suitable $\sigma$ and $\gamma$ in Theorem \ref{thm-4} and the Sololev embedding, we could obtain that the wave operator  $\Omega^{-}_{1}(s)$ is uniformly  bounded on $L^{p}$ for all $1\le p\le \infty$.
}
\end{rem}

\begin{thm}\label{thm-5}Assume that the assumption (\ref{ap}) is satisfied,  $\mathscr{H}_{\kappa} \ (\kappa=1,2)$ is admissible in the sense of Definition \ref{defi-4} and satisfies the stability condition (\ref{eq-32}). Then for every $s>0$, there is a direct sum decomposition
\begin{eqnarray}\label{ac-2}
\mathcal {H}=\mathscr {A}(s)\oplus{\rm Ran \Omega^{-}_{1}(s)\oplus Ran \Omega^{-}_{2}(s)}.
\end{eqnarray}\nonumber
\end{thm}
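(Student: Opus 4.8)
The plan is to split the decomposition into two logically independent parts, \emph{completeness} (the three subspaces span $\mathcal{H}$) and \emph{independence} (the sum is direct), and to replace the orthogonality argument used in the scalar Proposition \ref{ex-le-1} by a biorthogonal one built from the channel dynamics, since the projections of Lemma \ref{lem-2} are no longer orthogonal. First note that by Definition \ref{defi-4}(II) each space ${\rm Ran}\,P_{b}(\mathscr{H}_{\kappa})$ is finite dimensional, and by Lemma \ref{thm-4} together with its Remark the operators $\Omega^{-}_{\kappa}(s)$ are bounded with uniformly weighted-localized ranges; hence ${\rm Ran}\,\Omega^{-}_{\kappa}(s)$ is finite dimensional and, in particular, closed. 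The completeness $\mathcal{H}=\mathscr{A}(s)+{\rm Ran}\,\Omega^{-}_{1}(s)+{\rm Ran}\,\Omega^{-}_{2}(s)$ I would take from the multichannel decomposition for the matrix charge transfer model of \cite{RSS1}, exactly as the scalar spanning identity was obtained from that reference in the proof of Proposition \ref{ex-le-1}; the genuinely new work is to prove that the sum is direct.

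For directness I would introduce, for $\kappa=1,2$, the \emph{channel observable}
\begin{eqnarray}
\ell_{\kappa}(\vec{f}):=\lim_{t\to+\infty}\mathcal{C}_{\kappa}(t,s)\,P_{b}(\mathscr{H}_{\kappa},t)\,\mathcal{U}(t,s)\vec{f},\nonumber
\end{eqnarray}
where $\mathcal{C}_{\kappa}(t,s)$ undoes the modulation (and, for $\kappa=2$, the Galilei) factors of Definition \ref{def-3}, so that $\ell_{\kappa}$ takes values in ${\rm Ran}\,P_{b}(\mathscr{H}_{\kappa})$. I would only ever evaluate $\ell_{\kappa}$ on $\mathscr{A}(s)$ and on the two ranges, where the limit provably exists, so no a priori convergence on all of $\mathcal{H}$ is needed. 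The key tool is the intertwining relation $\mathcal{U}(t,s)\Omega^{-}_{\kappa}(s)=\Omega^{-}_{\kappa}(t)\,W_{\kappa}(t,s)$, with $W_{\kappa}$ the channel-$\kappa$ comparison dynamics read off from the definition of $\Omega^{-}_{\kappa}(s)$ and from (\ref{commut-1}); this is a cocycle identity because $P_{b}(\mathscr{H}_{\kappa})$ commutes with $\mathcal{U}_{\kappa}$. From it one reads off three facts: $\ell_{\kappa}\circ\Omega^{-}_{\kappa}(s)$ is an isomorphism of ${\rm Ran}\,P_{b}(\mathscr{H}_{\kappa})$ (whence $\Omega^{-}_{\kappa}(s)$ is injective, with closeness to the channel embedding for large $s$ as in Remark \ref{rem-2}(ii)); $\ell_{\kappa}$ vanishes on $\mathscr{A}(s)$ by the very definition of scattering states; and the crucial cross-channel relation $\ell_{1}\circ\Omega^{-}_{2}(s)=\ell_{2}\circ\Omega^{-}_{1}(s)=0$.

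Granting these, directness is immediate. If $\vec{f}_{c}+\Omega^{-}_{1}(s)\vec{g}_{1}+\Omega^{-}_{2}(s)\vec{g}_{2}=0$ with $\vec{f}_{c}\in\mathscr{A}(s)$, applying $\ell_{1}$ annihilates $\vec{f}_{c}$ and the channel-$2$ term and leaves $(\ell_{1}\circ\Omega^{-}_{1}(s))\vec{g}_{1}=0$, whence $\vec{g}_{1}=0$; symmetrically $\vec{g}_{2}=0$, and then $\vec{f}_{c}=0$. Combined with completeness this yields the asserted direct sum $\mathcal{H}=\mathscr{A}(s)\oplus{\rm Ran}\,\Omega^{-}_{1}(s)\oplus{\rm Ran}\,\Omega^{-}_{2}(s)$ and exhibits projections $P_{c}(s),P_{1b}(s),P_{2b}(s)$ summing to the identity, in analogy with (\ref{eq-26}).

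I expect the main obstacle to be the cross-channel vanishing $\ell_{1}\circ\Omega^{-}_{2}(s)=0$ and its mirror, because here the non-unitarity of the matrix propagator intrudes: by the Growth Assumption (\ref{ap}) and (\ref{se}) the channel evolutions may grow polynomially in $t$, so the decay must come entirely from the spatial separation of the channels. Since $\vec{v}_{1}=0$ and $\vec{v}_{2}=\vec{e}_{1}$, the two centres of force separate linearly, so $P_{b}(\mathscr{H}_{1},t)$, localized near the origin, applied to the channel-$2$ evolution, localized near $\vec{e}_{1}t$, produces an overlap that decays exponentially. The quantitative input making this beat the polynomial growth is the exponential localization of the generalized eigenfunctions spanning $L_{\ell}$ and $L^{\ast}_{\ell}$ from Definition \ref{defi-4}(IV), whose pairing (one centred at $0$, one at $\vec{e}_{1}t$) yields an $e^{-c|t|}$ bound; the non-orthogonality of the projections of Lemma \ref{lem-2} is precisely what forces the use of the adjoint eigenspaces $L^{\ast}_{\ell}$, entering through the biorthogonal form (\ref{eq-31}) of $P_{b}$, while the uniform weighted bounds of Lemma \ref{thm-4} render the estimate uniform in $s$.
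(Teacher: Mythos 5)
The decisive gap is in the completeness half. You propose to ``take'' the spanning identity $\mathcal{H}=\mathscr{A}(s)+{\rm Ran}\,\Omega^{-}_{1}(s)+{\rm Ran}\,\Omega^{-}_{2}(s)$ from \cite{RSS1}, in analogy with how the scalar identity was imported in Proposition \ref{ex-le-1}. But for the matrix model no such statement is available there: \cite{RSS1} proves decay estimates \emph{assuming} the data is a scattering state in the sense of Definition \ref{def-3}; asymptotic completeness for the matrix model is precisely what Theorem \ref{thm-5} asserts, and the paper presents it as a new result (its introduction announces that it will ``first prove the asymptotic completeness for matrix charge transfer model''). Accordingly, the paper's proof spends essentially all of its effort on the spanning step: for arbitrary $\vec{\phi}_{0}$ it expands $P_{b}(\mathscr{H}_{1})\mathcal{M}_{\alpha_{1},\gamma_{1}}(t)\mathcal{U}(t,s)\vec{\phi}_{0}$ along the generalized eigenfunctions $\vec{\psi}_{j}$ with unknown coefficients $A_{j}(t,s)$, derives the modulation ODEs (\ref{eq-45})--(\ref{eq-46}), and shows the $A_{j}(t,s)$ converge as $t\to+\infty$ because the forcing terms $\vec{g}_{j}(t,s)$ decay exponentially; the non-selfadjointness intrudes concretely in the zero mode, where $\ker(\mathscr{H}_{1})\subsetneq\ker(\mathscr{H}_{1}^{2})$ produces a coupled equation for $A_{0}$ that is resolved only by applying $\mathscr{H}_{1}$ once more. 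The limits $A_{j}(s),B_{j}(s)$ are then matched with elements of ${\rm Ran}\,\Omega^{-}_{\kappa}(s)$ via (\ref{eq-49}), and the remainder is shown to lie in $\mathscr{A}(s)$ using the cross-channel separation (\ref{eq-53'}). None of this appears in your proposal, and it cannot be outsourced to the reference.

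Your directness argument, by contrast, is sound and in substance matches the paper's: both rest on the facts that the channel limit is nontrivial on ${\rm Ran}\,\Omega^{-}_{\kappa}(s)$, vanishes on $\mathscr{A}(s)$ by definition, and vanishes across channels because the linear separation of the centres, together with the exponential localization of Definition \ref{defi-4}(IV) and the biorthogonal form (\ref{eq-31}) of $P_{b}$, beats the polynomial growth permitted by (\ref{ap}). Indeed your functionals $\ell_{\kappa}$ give the three-fold directness in one stroke, whereas the paper only verifies pairwise trivial intersections, so on this point your organization is the cleaner one. But this is the easy half of the theorem; as written, the proposal does not prove the decomposition.
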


\begin{proof}For any $\vec{\phi}_{0}\in \mathcal {H}$,  write
\begin{eqnarray}
\vec{\phi}(t,s):=\mathcal {U}(t,s)\vec{\phi}_{0}=P_{b}(\mathscr{H}_{1},t)\mathcal {U}(t,s)\vec{\phi}_{0}+P_{b}(\mathscr{H}_{2},t)\mathcal {U}(t,s)\vec{\phi}_{0}+\vec{R}(t,s).\nonumber
\end{eqnarray}
Furthermore, by the definition of $P_{b}(\mathscr{H}_{1},t)$ in Definition \ref{def-3}, we consider
\begin{eqnarray}
\label{eq-42}
\mathcal {M}_{\alpha_{1}, \gamma_{1}}(t)\vec{\phi}(t,s)&=&\mathcal {M}_{\alpha_{1}, \gamma_{1}}(t)\mathcal {U}(t,s)\vec{\phi}_{0}\nonumber\\&=&P_{b}(\mathscr{H}_{1})\mathcal {M}_{\alpha_{1}, \gamma_{1}}(t)\mathcal {U}(t,s)\vec{\phi}_{0}+\mathcal {M}_{\alpha_{1}, \gamma_{1}}(t)\vec{R}_{1}(t,s).
\end{eqnarray}
It is easy to see that $\mathcal {M}_{\alpha_{1}, \gamma_{1}}(t)\mathcal {U}(t,s)\vec{\phi}_{0}$
is the solution of the problem
\begin{eqnarray}\label{eq-44}
i\partial_{t}\vec{\phi}=\widetilde{\mathscr{H}}(t,s)\vec{\phi}, \ \ \ \ \ \ \vec{\phi}(s)=\mathcal {M}_{\alpha_{1}, \gamma_{1}}(s)\vec{\phi}_{0}
 \end{eqnarray}
 with  $\widetilde{\mathscr{H}}(t)$ defined by (\ref{eq-38}).
Now decompose
\begin{eqnarray}\label{eq-43}
P_{b}(\mathscr{H}_{1})\mathcal {M}_{\alpha_{1}, \gamma_{1}}(t)\mathcal {U}(t,s)\vec{\phi}_{0}=\sum^{M_{1}}_{j=1}A_{j}(t,s)e^{-i\omega_{j}(t-s)}\vec{\psi}_{j}+A_{0}(t,s)\vec{\psi}_{0},
\end{eqnarray}
for some unknown  functions $A_{j}$, where $\vec{\psi}_{j}\in L_{j}$ with $L_{j}$ defined as in Lemma \ref{lem-2} is the generalized eigenspace of $\mathscr{H}_{1}$. After substituting the decomposition (\ref{eq-42}) and (\ref{eq-43}) in (\ref{eq-44}) and notice that $P_{b}(\mathscr{H}_{1})\mathcal {M}_{\alpha_{1}, \gamma_{1}}(t)\vec{R}_{1}(t,s)=0$, we obtain the equation
\begin{eqnarray}\label{eq-45}
\sum^{M_{1}}_{j=1}\frac{1}{i}\partial_{t}A_{j}(t,s)e^{-i\omega_{j}(t-s)}\vec{\psi}_{j}+
\frac{1}{i}\partial_{t}A_{0}(t,s)\vec{\psi}_{0}+A_{0}(t,s)H_{1}\vec{\psi}_{0}&=&-P_{b}(\mathscr{H}_{1})\widetilde{V}_{2}(t,x-\vec{e}_{1}t)\widetilde{\vec{\phi}}(t,s)\nonumber\\
&:=&\sum^{M}_{j=0}\vec{g}_{j}(t,s)
\end{eqnarray}
where $\vec{g}_{j}(t,s)\in L_{j}$ is exponentially decaying in $t$ for any fixed $s$, $\widetilde{V}_{2}$ is defined in (\ref{eq-38})  and $\widetilde{\vec{\phi}}(t,s):=\mathcal {M}_{\alpha_{1}, \gamma_{1}}(t)\vec{\phi}(t,s)$. Hence we have
\begin{eqnarray}\label{eq-46}
&&\frac{1}{i}\partial_{t}A_{j}(t,s)e^{-i\omega_{j}(t-s)}\vec{\psi}_{j}=\vec{g}_{j}(t,s),\nonumber\\
&&\frac{1}{i}\partial_{t}A_{0}(t,s)\vec{\psi}_{0}+A_{0}(t,s)H_{1}\vec{\psi}_{0}=\vec{g}_{0}(t,s),
\end{eqnarray}
Since  $|\vec{g}_{j}(t,s)|\lesssim e^{-\alpha t}$, it follows that $A_{j}(t,s)$ has a limit $A_{j}(s)$ as $t\rightarrow \infty$ for $1\leq j\leq M_{1}$.  Moreover, applying $\mathscr{H}_{1}$ to (\ref{eq-46}) one obtains
$$\frac{1}{i}\partial_{t}A_{0}(t,s)\mathscr{H}_{1}\vec{\psi}_{0}=\mathscr{H}_{1}\vec{g}_{0}(t,s),$$
Since the right-hand side decays exponentially in $t$ and $\mathscr{H}_{1}\vec{\psi}_{0}$ is a localized function,
we can similarly obtain that $A_{0}(t,s)$ has a limit $A_{0}(s)$ as $t\rightarrow \infty$. Thus
\begin{eqnarray}\label{eq-47}
\Big\|P_{b}(\mathscr{H}_{1},t)\mathcal {U}(t,s)\vec{\phi}_{0}-\sum^{M_{1}}_{j=0}A_{j}(s)e^{-i\omega_{j}(t-s)}\mathcal {M}_{\alpha_{1}, \gamma_{1}}(t)^{-1}\vec{\psi}_{j}\Big\|_{L^{2}}\rightarrow 0, \ \ \ t\rightarrow +\infty.
\end{eqnarray}
Let $\widetilde{\mathcal {U}}(t,s)$ be defined by (\ref{commut-1}) and $\vec{u}_{j}(s)={M}_{\alpha_{1}, \gamma_{1}}(s)^{-1}\widetilde{\Omega}^{-}_{1}(s)\vec{\psi}_{j}$, notice that
\begin{eqnarray}\label{eq-48}
\widetilde{\mathcal {U}}(t,s)\widetilde{\mathcal {U}}(s,r)\mathcal{U}_{1}(r,s)\vec{\psi}_{j}-\mathcal{U}_{1}(t,s)\vec{\psi}_{j}
=\big(\widetilde{\mathcal {U}}(t,r)\mathcal{U}_{1}(r,t)-I\big)\mathcal{U}_{1}(t,s)\vec{\psi}_{j},\nonumber
\end{eqnarray}
and by the proof of Lemma \ref{thm-4},  $\widetilde{\mathcal {U}}(t,r)\mathcal{U}_{1}(r,t)P_{b}(\mathscr{H}_{1})-I\rightarrow 0$ as $r,t\rightarrow +\infty$  in $L^{2}$, we have
\begin{eqnarray}\label{eq-49}
\Big\|\mathcal {U}(t,s)\big(\sum^{M_{1}}_{j=0}A_{j}(s)\vec{u}_{j}(s)\big)-\sum^{M_{1}}_{j=0}A_{j}(s)e^{-i\omega_{j}(t-s)}\mathcal {M}_{\alpha_{1}, \gamma_{1}}(t)^{-1}\vec{\psi}_{j}\Big\|_{L^{2}}\rightarrow 0, \ \ \ t\rightarrow +\infty,
\end{eqnarray}
which combing (\ref{eq-47}) implies
\begin{eqnarray}\label{eq-50}
\Big\|\mathcal {U}(t,s)\big(\sum^{M_{1}}_{j=0}A_{j}(s)\vec{u}_{j}(s)\big)-P_{b}(\mathscr{H}_{1},t)\mathcal {U}(t,s)\vec{\phi}_{0}\Big\|_{L^{2}}\rightarrow 0, \ \ \ t\rightarrow +\infty.
\end{eqnarray}
Similarly, if we denote $K_{j}$ $(0\leq j\leq M_{2})$ the generalized eigenspaces for $\mathscr{H}_{2}$, it follows that as $ t\rightarrow +\infty$,
\begin{eqnarray}
\label{eq-47'}
\Big\|P_{b}(\mathscr{H}_{2},t)\mathcal {U}(t,s)\vec{\phi}_{0}-\sum^{M_{2}}_{j=0}B_{j}(s)e^{-i\mu_{j}(t-s)}\mathcal {G}_{-\vec{e}_{1}}(t)\mathcal {M}_{\alpha_{1}, \gamma_{1}}(t)^{-1}\vec{h}_{j}\Big\|_{L^{2}}\rightarrow 0,
\end{eqnarray}
and
\begin{eqnarray}
\label{eq-51'}
\Big\|\mathcal {U}(t,s)\big(\sum^{M_{2}}_{j=0}B_{j}(s)\vec{v}_{j}(s)\big)-\sum^{M_{2}}_{j=0}B_{j}(s)e^{-i\mu_{j}(t-s)}\mathcal {G}_{-\vec{e}_{1}}(t)\mathcal {M}_{\alpha_{1}, \gamma_{1}}(t)^{-1}\vec{h}_{j}\Big\|_{L^{2}}\rightarrow 0,
\end{eqnarray}
for $\vec{h}_{j}\in K_{j}$ and some functions $B_{j}$ of $s$.
Then we have
\begin{eqnarray}\label{eq-51}
\Big\|\mathcal {U}(t,s)\big(\sum^{M_{2}}_{j=0}B_{j}(s)\vec{v}_{j}(s)\big)-P_{b}(\mathscr{H}_{2},t)\mathcal {U}(t,s)\vec{\phi}_{0}\Big\|_{L^{2}}\rightarrow 0, \ \ t\rightarrow +\infty,
\end{eqnarray}
where $\vec{v}_{j}(s)=\mathcal {G}_{\vec{e}_{1}}(s)^{-1}{M}_{\alpha_{1}, \gamma_{1}}(s)^{-1}\widetilde{\Omega}^{-}_{2}(s)\vec{h}_{j}$ and
\begin{eqnarray}
\widetilde{\Omega}^{-}_{2}(s)=s-\lim_{t\rightarrow+\infty}\widehat{\mathcal{U}}(s,t)\mathcal{U}_{2}(t,s)P_{b}(\mathscr{H}_{2}).\nonumber
\end{eqnarray}
Here $\widehat{\mathcal{U}}(t)$ the propagator  of
$i\partial_{t}\vec{\psi}=\widehat{\mathscr{H}}(t)\vec{\psi}$ with
\begin{eqnarray}\label{eq-52}
\widehat{\mathscr{H}}(t)=\mathscr{H}_{2}+\widetilde{V}_{1}(t,x+\vec{e}_{1}t) \nonumber
\end{eqnarray}
where $\mathscr{H}_{2}$ is defined by (\ref{eq-34}),
\begin{eqnarray}
\widetilde{V}_{1}(t,x)=\left(
  \begin{array}{cc}
    U_{1}(x) & -e^{i(\theta_{1}-\theta_{2})(t,x)}W_{1}(x) \\
    e^{-i(\theta_{1}-\theta_{2})(t,x)}W_{1}(x) & -U_{1}(x) \\
  \end{array}
  \right), \nonumber
\end{eqnarray}
and $\theta_{1}, \theta_{2}$ are defined by (\ref{eq-35}).

Now we make decomposition
\begin{eqnarray}\label{eq-53}
\vec{\phi}_{0}=\sum^{M_{1}}_{j=0}A_{j}(s)\vec{u}_{j}(s)+\sum^{M_{2}}_{j=0}B_{j}(s)\vec{v}_{j}(s)+\vec{f}(s)
\end{eqnarray}
where $\vec{f}(s):=\vec{\phi}_{0}-\sum^{M_{1}}_{j=0}A_{j}(s)\vec{u}_{j}(s)-\sum^{M_{2}}_{j=0}B_{j}(s)\vec{v}_{j}(s)$.
It follows
\begin{eqnarray}
P_{b}(\mathscr{H}_{1})\mathcal {U}(t,s)\vec{f}(s)&=&P_{b}(\mathscr{H}_{1})\mathcal {U}(t,s)\vec{\phi}_{0}\nonumber\\
&&\ \ \ \ \ -P_{b}(\mathscr{H}_{1})\mathcal {U}(t,s)\big(\sum^{M_{1}}_{j=0}A_{j}(s)\vec{u}_{j}(s)\big)-P_{b}(\mathscr{H}_{1})\mathcal {U}(t,s)\big(\sum^{M_{2}}_{j=0}B_{j}(s)\vec{v}_{j}(s)\big).\nonumber
\end{eqnarray}
By using (\ref{eq-50}) and the identity $P^{2}_{b}(\mathscr{H}_{1})=P_{b}(\mathscr{H}_{1})$, we obtain
\begin{eqnarray}\label{eq-53''}
\Big\|P_{b}(\mathscr{H}_{1})\mathcal {U}(t,s)\vec{\phi}_{0}-P_{b}(\mathscr{H}_{1})\mathcal {U}(t,s)\big(\sum^{M_{1}}_{j=0}A_{j}(s)\vec{u}_{j}(s)\big)\Big\|_{L^{2}}\rightarrow 0, \ \ \ t\rightarrow \infty.
\end{eqnarray}
Furthermore,  $P_{b}(\mathscr{H}_{1})\sum^{M_{2}}_{j=0}B_{j}(s)e^{-i\mu_{j}(t-s)}\mathcal {G}_{-\vec{e}_{1}}(t)\mathcal {M}_{\alpha_{1}, \gamma_{1}}(t)^{-1}\vec{h}_{j}$  goes to zero in the $L^{2}$ sense as $t\rightarrow\infty$ due to the fact that $\mathcal {G}_{-\vec{e}_{1}}(t)\mathcal {M}_{\alpha_{1}, \gamma_{1}}(t)^{-1}\vec{h}_{j}$ actually has a moving center with position $\vec{e}_{1}t$ and  $P_{b}(\mathscr{H}_{1})$ is the projection onto the space of localized function, which combining (\ref{eq-51'}) imply
\begin{eqnarray}\label{eq-53'}
P_{b}(\mathscr{H}_{1})\mathcal {U}(t,s)\Big(\sum^{M_{2}}_{j=0}B_{j}(s)\vec{v}_{j}(s)\Big)\rightarrow 0, \ \ t\rightarrow\infty.
\end{eqnarray}
Thus by (\ref{eq-53''}) and (\ref{eq-53'}), it follows that $\vec{f}(s)\in \mathscr {A}(s)$.

Finally, we prove (\ref{eq-53}) is a direct sum decomposition. Here the modulation transform will not be taken into account for simplicity. Let $0\neq\vec{v}(s)\in \mathscr {A}(s)\cap {\rm Ran \Omega^{-}_{1}(s)}$, there exists $\vec{u}$ such that  $\vec{v}(s)=\Omega^{-}_{1}(s)\vec{u}$, without loss of generality, we assume $\vec{u}$ is a generalized eigenfunction of $\mathscr{H}_{1}$. Then similar to the proof of (\ref{eq-49}), we have
\begin{eqnarray}\label{eq-54}
\big\|P_{b}(\mathscr{H}_{1})\mathcal {U}(t,s)\vec{v}(s)-P_{b}(\mathscr{H}_{1})\mathcal {U}_{1}(t,s)\vec{u}\big\|_{L^{2}}\rightarrow 0, \ \ \ t\rightarrow +\infty, \nonumber
\end{eqnarray}
which contradict to the fact that $\vec{v}(s)\in \mathscr {A}(s)$. Let $0\neq\vec{v}(s)\in {\rm Ran \Omega^{-}_{2}(s)}\cap {\rm Ran \Omega^{-}_{1}(s)}$, there exist $\vec{u}_{j}$, $j=1,2$ such that
$$\vec{v}(s)=\Omega^{-}_{1}(s)\vec{u}_{1}=\Omega^{-}_{2}(s)\vec{u}_{2},$$
 applying the same argument of the proof of (\ref{eq-49}) again, we  obtain
$$\lim_{t\rightarrow +\infty}\big(P_{b}(\mathscr{H}_{1})\mathcal {U}_{1}(t,s)\vec{u}_{1}-\mathcal {G}_{-\vec{e_{1}}}(t)P_{b}(\mathscr{H}_{2})\mathcal {U}_{2}(t,s)\vec{u}_{2}\mathcal {G}_{\vec{e_{1}}}(s)\big)=0$$
in the sense of $L^{2}$, which combing the exponentially decaying of
$\vec{u}_{j}$, $j=1,2$ to get a contradiction. Hence we finish the proof.
\end{proof}

\begin{rem}{\rm
One may further seek for the orthogonal sum decomposition of form (\ref{ac-2}). In fact, it is ture if we use $\mathcal {U}^{\ast}(t,s)$, the conjugate operator of $\mathcal {U}(t,s)$ instead of $\mathcal {U}(s,t)$ in the definitions of wave operators $\Omega^{-}_{1}(s)$ and $\Omega^{-}_{2}(s)$.}
\end{rem}


\begin{cor}
Let $s,t\in \mathbb{R}$ and $P_{c}(t)$ be the projection onto $\mathscr{A}(t)$, then we have
\begin{eqnarray}\label{commute-2}
\mathcal {U}(s,t)P_{c}(t)=P_{c}(s)\mathcal {U}(s,t).\nonumber
\end{eqnarray}
\end{cor}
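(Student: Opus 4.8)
The plan is to reproduce the structure of the scalar argument in Proposition \ref{ex-le-1}(iii), now using the direct sum decomposition of Theorem \ref{thm-5} in place of the orthogonal decomposition available in the scalar case. Write $P_{1b}(s)$ and $P_{2b}(s)$ for the projections onto $\mathrm{Ran}\,\Omega^{-}_{1}(s)$ and $\mathrm{Ran}\,\Omega^{-}_{2}(s)$ induced by the decomposition (\ref{ac-2}), so that $P_{c}(s)+P_{1b}(s)+P_{2b}(s)=I$ on $\mathcal{H}$ for every $s$. Since $\mathcal{U}(s,t)$ is bounded on $L^{2}$ for fixed $s,t$ (by the growth assumption (\ref{ap}) together with the cocycle property of the propagator), the identity $\mathcal{U}(s,t)P_{c}(t)=P_{c}(s)\mathcal{U}(s,t)$ will follow from two propagation statements: that $\mathcal{U}(s,t)$ carries $\mathscr{A}(t)$ onto $\mathscr{A}(s)$, and that it carries each $\mathrm{Ran}\,\Omega^{-}_{\kappa}(t)$ into $\mathrm{Ran}\,\Omega^{-}_{\kappa}(s)$ for $\kappa=1,2$.

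The first statement is exactly (\ref{ex-eq-2}) after interchanging the roles of $s$ and $t$. For the second I would establish the wave-operator intertwining relation. Writing $\Omega^{-}_{1}(s)=s\text{-}\lim_{\tau\to+\infty}\mathcal{U}(s,\tau)R_{1}(\tau,s)$ with reference dynamics $R_{1}(\tau,s)=\mathcal{M}_{\alpha_{1},\gamma_{1}}(\tau)^{-1}\mathcal{U}_{1}(\tau,s)P_{b}(\mathscr{H}_{1})\mathcal{M}_{\alpha_{1},\gamma_{1}}(s)$, the fact that $P_{b}(\mathscr{H}_{1})$ commutes with the time-independent flow $\mathcal{U}_{1}$ gives the cocycle factorization $R_{1}(\tau,t)=R_{1}(\tau,s)\,S_{1}(s,t)$, where $S_{1}(s,t)=\mathcal{M}_{\alpha_{1},\gamma_{1}}(s)^{-1}\mathcal{U}_{1}(s,t)\mathcal{M}_{\alpha_{1},\gamma_{1}}(t)$ is a fixed bounded, invertible operator independent of $\tau$. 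Because $\mathcal{U}(s,t)$ is continuous and $S_{1}(s,t)$ may be pulled out of the strong limit, one obtains $\mathcal{U}(s,t)\,\Omega^{-}_{1}(t)=\Omega^{-}_{1}(s)\,S_{1}(s,t)$, and hence $\mathcal{U}(s,t)\,\mathrm{Ran}\,\Omega^{-}_{1}(t)=\mathrm{Ran}\,\Omega^{-}_{1}(s)$. The same computation applies to $\Omega^{-}_{2}$, where the modulation $\mathcal{M}_{\alpha_{2},\gamma_{2}}$ and the Galilei transform $\mathcal{G}_{\vec{e}_{1}}$ enter the reference dynamics; these compose according to their own cocycle identities (already exploited in (\ref{commut-1}) and throughout the proof of Theorem \ref{thm-5}), yielding the analogous factorization $\mathcal{U}(s,t)\,\Omega^{-}_{2}(t)=\Omega^{-}_{2}(s)\,S_{2}(s,t)$.

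Granting these, the inclusion $\mathcal{U}(s,t)\,\mathrm{Ran}\,\Omega^{-}_{\kappa}(t)\subseteq\mathrm{Ran}\,\Omega^{-}_{\kappa}(s)$ gives $P_{c}(s)\mathcal{U}(s,t)P_{\kappa b}(t)=0$ for $\kappa=1,2$, since $P_{c}(s)$ annihilates $\mathrm{Ran}\,\Omega^{-}_{\kappa}(s)$; and $\mathcal{U}(s,t)\mathscr{A}(t)=\mathscr{A}(s)$ gives $P_{c}(s)\mathcal{U}(s,t)P_{c}(t)=\mathcal{U}(s,t)P_{c}(t)$. Inserting $I=P_{c}(t)+P_{1b}(t)+P_{2b}(t)$ then yields
\begin{eqnarray}
P_{c}(s)\mathcal{U}(s,t)=P_{c}(s)\mathcal{U}(s,t)\big(P_{c}(t)+P_{1b}(t)+P_{2b}(t)\big)=P_{c}(s)\mathcal{U}(s,t)P_{c}(t)=\mathcal{U}(s,t)P_{c}(t),\nonumber
\end{eqnarray}
which is the claim. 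I expect the main obstacle to be the rigorous justification of the intertwining factorization for $\kappa=2$: one must track how $\mathcal{M}_{\alpha_{2},\gamma_{2}}$ and $\mathcal{G}_{\vec{e}_{1}}$ interleave with $\mathcal{U}_{2}$ and $P_{b}(\mathscr{H}_{2})$ and confirm that the resulting $S_{2}(s,t)$ is a genuinely $\tau$-independent bounded operator, so that it survives passage to the strong limit. Since $\mathcal{U}$ is only polynomially bounded rather than unitary, some care with (\ref{ap}) is also needed to ensure that all the strong limits exist and that $\mathcal{U}(s,t)$ may be interchanged with $s\text{-}\lim_{\tau}$; however, no estimate beyond those already contained in Lemma \ref{thm-4} and the proof of Theorem \ref{thm-5} should be required.
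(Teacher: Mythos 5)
Your proposal is correct and follows essentially the same route as the paper, which simply states that the proof is identical to the scalar case (Proposition 2.2(iii)): propagate $\mathscr{A}(t)$ and each $\mathrm{Ran}\,\Omega^{-}_{\kappa}(t)$ under $\mathcal{U}(s,t)$ via the wave-operator intertwining relations, then insert $I=P_{c}(t)+P_{1b}(t)+P_{2b}(t)$. Your additional care with the non-orthogonal (merely direct-sum) decomposition and with pulling the $\tau$-independent factor $S_{\kappa}(s,t)$ out of the strong limit is exactly the bookkeeping the paper leaves implicit.
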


\begin{proof}
The proof is exactly the same as the scalar case (see Lemma \ref{ex-le-1}), we omit the detail here.
\end{proof}

\subsection{The Strichartz estimates for matrix charge transfer model}

The matrix charge transfer model is not self-adjoint, one could not apply the $TT^{\ast}$ to the endpoint Strichartz  estimates even though the decay estimates hold. Here, by using asymptotic completeness and the properties of wave operators, we would show that whenever the initial data belongs to the ``scattering states", the Strichartz estimates hold for all admissible pair defined as in (\ref{pair}).

\begin{thm}\label{thm-6}
Consider the matrix charge transfer model (\ref{main-equation-2}) with $\mathscr{H}_{\kappa} \ (\kappa=1,2)$ being admissible in the sense of Definition \ref{defi-4} and satisfiying the stability condition (\ref{eq-32}). Let $\mathcal {U}(t)$ be its propagator satisfying the growth assumption (\ref{ap}). The for any initial date $\vec{\psi}_{0}\in \mathscr {A}(0)$ and admissible pair $(p,q)$ satisfying (\ref{pair}), one has the Strichartz estimates
\begin{eqnarray}\label{eq-55}
\big\|\mathcal {U}(t)\vec{\psi}_{0}\big\|_{L_{t}^{p}L_{x}^{q}}\lesssim \|\vec{\psi}_{0}\|_{L^{2}}.
\end{eqnarray}
\end{thm}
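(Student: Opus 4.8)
The plan is to reproduce, in the matrix setting, the three–step scheme used for the scalar case in Theorem \ref{thm1}: a weighted Kato--Jensen decay estimate, a local decay (local smoothing) estimate in $L^2_tL^2_x$, and finally the Strichartz bound obtained by feeding the local decay estimate into Duhamel's formula against the free propagator. Two structural changes must be accommodated throughout. First, the scalar unitarity $\|U(t,s)\|_{L^2\to L^2}\le 1$ is replaced by the polynomial growth (\ref{ap}); this is harmless because it is used only on the diagonal region $|t-t_0|\le 1$ (where the propagator is uniformly bounded on bounded time intervals), while on $|t-t_0|>1$ we rely on the genuine decay of Remark \ref{rem-1'}(ii). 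Second, the scalar free flow $e^{it\Delta/2}$ is replaced by $e^{-it\mathscr{H}_0}$; since $\mathscr{H}_0=\mathrm{diag}(-\tfrac12\Delta,\tfrac12\Delta)$ is diagonal, the free homogeneous and inhomogeneous Strichartz estimates (\ref{s-f})--(\ref{s-f'}) hold componentwise on $\mathcal H=L^2\times L^2$. The one genuinely new ingredient I need is the matrix analogue of Proposition \ref{lem-1}, namely that $\sup_s\|\langle x-D(s)\rangle^{-\sigma_1}P_{1b}(s)\langle x\rangle^{\sigma_2}\|_{L^2\to L^2}<\infty$ and $\sup_s\|\langle x-D(s)\rangle^{-\sigma_1}P_{2b}(s)\langle x-\vec e_1 s\rangle^{\sigma_2}\|_{L^2\to L^2}<\infty$.

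For Step 1 I would prove, for $0\le t_0<t$ and $\sigma>3/2$,
\[
\big\|\langle x-D(t)\rangle^{-\sigma}\mathcal U(t,t_0)P_c(t_0)\langle x-D(t_0)\rangle^{-\sigma}\big\|_{L^2\to L^2}\lesssim\langle t-t_0\rangle^{-3/2}.
\]
For $|t-t_0|\le1$ this is immediate from the uniform boundedness of the propagator (a consequence of (\ref{ap})). For $|t-t_0|>1$ I would use the weight/duality mechanism of the scalar proof: since $\sigma>3/2$ the operator $\langle x-D(t_0)\rangle^{-\sigma}$ maps $L^2$ into $L^1\cap L^2$ with uniformly bounded norm; the data $P_c(t_0)f$ lies in the scattering space $\mathscr A(t_0)$, so the $L^1\cap L^2\to L^2+L^\infty$ decay estimate of Remark \ref{rem-1'}(ii) applies and produces the factor $\langle t-t_0\rangle^{-3/2}$; and $\langle x-D(t)\rangle^{-\sigma}$ maps $L^2+L^\infty$ back into $L^2$ by the duality $(L^2+L^\infty)^*=L^1\cap L^2$. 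The commuted form with $P_c(t)$ on the left then follows from the commutation identity $\mathcal U(s,t)P_c(t)=P_c(s)\mathcal U(s,t)$ established in the Corollary above. This step needs the decay estimate for $\mathcal U(t,s)$ uniformly in the base time $s$, which holds by running the proof of Remark \ref{rem-1'}(ii) from time $s$.

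For Step 2 I would establish $\|\langle x-D(t)\rangle^{-\sigma}\mathcal U(t)\vec\psi_0\|_{L^2_tL^2_x}\lesssim\|\vec\psi_0\|_{L^2}$ for $\vec\psi_0\in\mathscr A(0)$. Writing $\Phi(t)=e^{-it\mathscr H_0}\vec\psi_0$ as a solution of $i\partial_t\Phi=\mathscr H(t)\Phi-V(t)\Phi$ with $V(t)=V_1(t,\cdot)+V_2(t,\cdot-\vec e_1t)$ and running Duhamel against $\mathcal U$ gives
\[
\mathcal U(t)\vec\psi_0=e^{-it\mathscr H_0}\vec\psi_0-i\int_0^t\mathcal U(t,s)V(s)e^{-is\mathscr H_0}\vec\psi_0\,ds.
\]
The free term is handled by $P_c(t)=I-P_{1b}(t)-P_{2b}(t)$: the $I$--piece is the free local decay $\|\langle x-D(t)\rangle^{-\sigma}e^{-is\mathscr H_0}\vec\psi_0\|_{L^2_tL^2_x}\lesssim\|\vec\psi_0\|_{L^2}$ (H\"older with $\langle x\rangle^{-\sigma}\in L^3$ together with the admissible pair $(2,6)$), and the $P_{\kappa b}$--pieces are reduced to the same free local decay by the matrix analogue of Proposition \ref{lem-1}. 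The integral term is controlled by factoring $V=|V|^{1/2}|V|^{1/2}$, inserting the weights $\langle x-D(s)\rangle^{\pm\sigma}$, applying Step 1 to obtain a $\langle t-s\rangle^{-3/2}$ convolution kernel, and closing with Young's inequality in $t$ and the free bound $\||V_\kappa|^{1/2}e^{-is\mathscr H_0}\vec\psi_0\|_{L^2_tL^2_x}\lesssim\|\vec\psi_0\|_{L^2}$; here I use that the matrix potentials have localized smooth entries (the phases $e^{i\theta_\kappa}$ have modulus one), so $|V_\kappa|^{1/2}\langle x\rangle^\sigma$ is bounded and $|V_\kappa|^{1/2}\in L^3$.

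Finally, for Step 3 I would write the standard Duhamel expansion $\mathcal U(t)\vec\psi_0=e^{-it\mathscr H_0}\vec\psi_0-i\int_0^t e^{-i(t-s)\mathscr H_0}V(s)\mathcal U(s)\vec\psi_0\,ds$ and apply $\|\cdot\|_{L^p_tL^q_x}$: the free term is bounded by (\ref{s-f}), and the integral term by the inhomogeneous free estimate (\ref{s-f'}) with $(\tilde p,\tilde q)=(2,6)$, reducing it to $\|V(t)\mathcal U(t)\vec\psi_0\|_{L^2_tL^{6/5}_x}$. By H\"older ($\tfrac{1}{6/5}=\tfrac13+\tfrac12$) this is dominated by $\||V|^{1/2}\langle x-D(t)\rangle^\sigma\|_{L^\infty}\,\|\langle x-D(t)\rangle^{-\sigma}\mathcal U(t)\vec\psi_0\|_{L^2_tL^2_x}$, which is $\lesssim\|\vec\psi_0\|_{L^2}$ by Step 2. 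I expect the main obstacle to be the matrix analogue of Proposition \ref{lem-1}: because the $P_{\kappa b}(s)$ are no longer orthogonal, I must represent them biorthogonally as $P_{\kappa b}(s)f=\sum_{i,j}c_{ij}\,\vec\phi_j(s)\langle\vec\psi_i(s),f\rangle$ and control both the range basis $\vec\phi_j(s)$ (via Lemma \ref{thm-4}) and the dual basis $\vec\psi_i(s)$ coming from the adjoint wave operators, for which I would invoke the same uniform weighted bounds applied to $\mathscr H_\kappa^*$ (admissible by Definition \ref{defi-4}(V), cf.\ the remark following Theorem \ref{thm-5}). Keeping these weighted bounds uniform in $s$ against the polynomial growth (\ref{ap})--(\ref{se}) of $\widetilde{\mathcal U}$ is the delicate point, but it is precisely what the rapid decay of $\vec K(r,s,\cdot)$ in the proof of Lemma \ref{thm-4} is designed to absorb.
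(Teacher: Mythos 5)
Your proposal is correct and follows exactly the route the paper takes: its proof of Theorem \ref{thm-6} consists precisely of the statement that the three-step scheme of Theorem \ref{thm1} (weighted Kato--Jensen estimate, $L^2_tL^2_x$ local decay, then Duhamel against the free flow) carries over verbatim, with the growth assumption (\ref{ap}) replacing unitarity and Lemma \ref{thm-4} replacing Lemma \ref{thm0}, and the paper omits all further detail. Your identification of the matrix analogue of Proposition \ref{lem-1} as the one genuinely new ingredient --- requiring the biorthogonal representation $P_{\kappa b}(s)f=\sum_{i,j}c_{ij}\vec{\phi}_{j}(s)\langle\vec{\psi}_{i}(s),f\rangle$ and control of the dual basis via the adjoint wave operators, cf.\ Definition \ref{defi-4}(V) and the remark after Theorem \ref{thm-5} --- is exactly the point the paper leaves implicit.
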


\begin{proof}
As in the scalar case, (\ref{eq-55}) is proved by  three steps, which could be summed as
$$\rm {Kato{-}Jensen  \Rightarrow Local\ decay\Rightarrow\ Strichartz\ estimates}.$$
The is basically identical with the scalar case, we would not write down these details.
\end{proof}

Finally, we will consider matrix charge transfer model with nonlinear term
\begin{eqnarray}\label{main-equation-6}
\left\{%
\begin{array}{ll}\label{main-equation-2}
i\partial_{t}\vec{\psi}=\mathscr{H}_{0}\vec{\psi}+V_{1}(t,x)\vec{\psi}+V_{2}(t,x-\vec{e}_{1}t)+\vec{\psi}+\vec{F}(t),\\
\vec{\psi}(0,\cdot)=\vec{\psi}_{0}\in \mathscr{A}(0),\ \ x\in \mathbb{R}^{n},\\
\end{array}
\right.
\end{eqnarray}
We project both side of the equation onto ``scattering states" just as in the scalar case, and then could obtain the following result.

\begin{thm}\label{thm-7}
Consider the nonlinear matrix charge transfer model (\ref{main-equation-6}) with $\mathscr{H}_{\kappa} \ (\kappa=1,2)$ being admissible in the sense of Definition \ref{defi-4} and satisfiying the stability condition (\ref{eq-32}). Let $\mathcal {U}(t)$ be the propagator of the equation (\ref{main-equation-2}) satisfying the growth assumption (\ref{ap}). Then for initial data $\vec{\psi}_{0}\in \mathscr{A}(0)$ and admissible pairs $(p,q),\ (\tilde{p},\tilde{q})$  satisfying (\ref{pair}), the solution $\vec{\psi}(t,x)$ satisfies the  Strichartz estimates,
\begin{eqnarray}\label{NHS}
\|P_{c}(t)\vec{\psi}(t)\|_{L^{p}_{t}L^{q}_{x}}\lesssim \|\vec{\psi}_{0}\|_{L^{2}}+\|\vec{F}\|_{L^{\tilde{p}'}_{t}L^{\tilde{q}'}_{x}}.
\end{eqnarray}
\end{thm}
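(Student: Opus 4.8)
The plan is to transcribe the five-step scheme used for Theorem \ref{thm2} to the matrix setting, replacing the scalar propagator $U(t,s)$ by $\mathcal{U}(t,s)$ and the free group $e^{it\Delta/2}$ by $e^{-it\mathscr{H}_0}$, and proving the estimate for $n=3$ as in the scalar case. The observation that keeps the free Strichartz estimates (\ref{s-f}) and (\ref{s-f'}) available is that $e^{-it\mathscr{H}_0}=\mathrm{diag}\big(e^{it\Delta/2},e^{-it\Delta/2}\big)$ acts componentwise as two free Schr\"odinger evolutions, so both the homogeneous and inhomogeneous endpoint estimates hold entrywise and hence for the vector-valued group. The matrix potentials $V_\kappa(t,\cdot)$ are handled by dominating their operator norm pointwise by a scalar, exponentially localized weight $a_\kappa(x)$ (the modulation factors $e^{\pm i\theta_\kappa}$ have modulus one and are harmless), and then splitting $a_\kappa=a_\kappa^{1/2}\cdot a_\kappa^{1/2}$ exactly as $|V|^{1/2}|V|^{1/2}$ is used in the scalar H\"older step, where $\tfrac{1}{6/5}=\tfrac13+\tfrac12$.

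First I would establish the matrix Kato--Jensen estimate, the analog of (\ref{eq-9-2}): for $0\le t_0<t$ and $\sigma>\tfrac32$,
$$\big\|\langle x-D(t)\rangle^{-\sigma}\mathcal{U}(t,t_0)P_c(t_0)\langle x-D(t_0)\rangle^{-\sigma}\big\|_{L^2\to L^2}\lesssim\langle t-t_0\rangle^{-3/2}.$$
Since $\mathcal{U}$ is no longer an $L^2$ isometry, I would split according to the time scale: for $|t-t_0|\le1$ the bound follows from the growth assumption (\ref{ap}), which furnishes a uniform constant on the unit interval, while for $|t-t_0|>1$ I would sandwich the weak decay estimate of Remark \ref{rem-1'}(ii), using that $\langle x\rangle^{-\sigma}:L^2\to L^1\cap L^2$ and $\langle x\rangle^{-\sigma}:L^2+L^\infty\to L^2$ are bounded for $\sigma>\tfrac32$, together with the intertwining identity $\mathcal{U}(t,t_0)P_c(t_0)=P_c(t)\mathcal{U}(t,t_0)$ (the matrix analog of (\ref{commute-1})). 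As in the scalar case I would verify, following the argument of \cite{RSS1}, that the weak decay, and hence this estimate, is uniform in the base time $t_0$.

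With Kato--Jensen in hand, Steps 2 and 4 (local decay for the homogeneous solution and for the projected solution) go through exactly as in Theorem \ref{thm2}, using the matrix analog of Proposition \ref{lem-1}; the latter follows from Lemma \ref{thm-4} and the biorthogonal structure $P_bf=\sum_{i,j}c_{ij}\phi_j\langle\psi_i,f\rangle$ of Lemma \ref{lem-2}, since the generating functions $\vec u_j(s),\vec v_j(s)$ carry uniform weighted-$L^2$ bounds. The genuinely new ingredient beyond Theorem \ref{thm-6} is Step 3, the source-term local decay (\ref{eq-15}): I would introduce the auxiliary problem $i\Phi_t=\mathscr{H}_0\Phi+\vec F$, so that $\Phi(t)=e^{-it\mathscr{H}_0}\vec\psi_0-i\int_0^t e^{-i(t-s)\mathscr{H}_0}\vec F(s)\,ds$, apply the inhomogeneous free Strichartz estimate (\ref{s-f'}) to obtain $\|\langle x-D(t)\rangle^{-\sigma_1}\Phi\|_{L^2_tL^2_x}\lesssim\|\vec\psi_0\|_{L^2}+\|\vec F\|_{L^{\tilde p'}_tL^{\tilde q'}_x}$, project onto the scattering states via the weighted projection bounds, and then run the reversed Duhamel identity (the matrix version of (\ref{eq-11-3})) through the Kato--Jensen decay and Young's inequality exactly as in (\ref{eq-18})--(\ref{eq-19}). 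Step 5 then closes the estimate by applying (\ref{s-f}) and (\ref{s-f'}) to the projected equation $i\partial_t(P_c(t)\vec\psi)=\mathscr{H}_0P_c(t)\vec\psi+(V_1+V_2(\cdot-\vec e_1t))P_c(t)\vec\psi+P_c(t)\vec F$, using the $L^{\tilde q'}$-boundedness of $P_c(t)$ and the local decay of Step 4.

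I expect the \emph{main obstacle} to be Step 1: because $\mathcal{U}(t,s)$ fails to be unitary and the projections $P_c(\cdot),P_b(\mathscr{H}_\kappa,\cdot)$ are only biorthogonal rather than orthogonal, one cannot invoke $\|\mathcal{U}\|_{L^2\to L^2}\le1$ as in the scalar short-time bound, nor the clean orthogonal weighted estimate of Proposition \ref{lem-1}. The delicate point is to combine the polynomial growth (\ref{ap}) with the weak $L^1\cap L^2\to L^2+L^\infty$ decay so that the weighted two-sided bound is uniform in both $t$ and $t_0$, and to secure the matrix weighted projection bounds with constants independent of $s$; once these are in place the remaining steps are routine adaptations of the scalar argument.
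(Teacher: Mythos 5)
Your proposal follows exactly the paper's intended route: the paper's own ``proof'' of this theorem consists only of listing the five-step scheme (Kato--Jensen $\Rightarrow$ local decay for $\mathcal{U}(t)$ $\Rightarrow$ local decay for the source term $\Rightarrow$ local decay for the solution $\Rightarrow$ Strichartz) and asserting that the details are essentially the same as in the scalar case. You in fact supply considerably more detail than the paper does, and your identification of the short-time bound via the growth assumption (\ref{ap}) and of the biorthogonal weighted projection estimates as the delicate points is consistent with the ingredients the paper develops in Lemma \ref{thm-4} and Remark \ref{rem-1'}.
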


\begin{proof}
We will still apply the ``five steps argument" used for scalar case, we here only briefly recall the steps of this argument. The main idea is as follows:
\begin{eqnarray}
{\rm Kato{-}Jense\ for\ linear\ propagator\ \mathcal{U}(t)} &\Rightarrow& {\rm Local\ decay\ for\ linear\ propagator\ \mathcal{U}(t)}\nonumber\\
&\Rightarrow& {\rm Local\ decay\ for\ source\ term}\nonumber\\
&\Rightarrow& {\rm Local\ decay\ for\ solution}\ \vec{\psi}\nonumber\\
&\Rightarrow& {\rm The\ Strichartz\ estimates.}\nonumber
\end{eqnarray}
We would omit the detail since the whole proof are essentially the same as the scalar one.
\end{proof}

  \end{document}